\theoremstyle{theorem}
\newtheorem{theorem}{Theorem}[section]
\newtheorem{corollary}{Corollary}[theorem]
\newtheorem{lemma}[theorem]{Lemma}
\newtheorem{proposition}{Proposition}[section]
\theoremstyle{definition}
\newtheorem{definition}{Definition}[section]
\newtheorem{remark}{Remark}
\title{Framed Braid Equivalences}
\author{Anastasios Kokkinakis}
\date{}
\begin{document}

\setlength{\parindent}{20pt}

\maketitle

\begin{abstract}
    We introduce framed versions of the $L$-moves and prove a one move theorem for the extension of the Markov theorem for framed braids. We further introduce framed versions of the Hilden and Pure Hilden groups, we give presentations and we use them to state and prove a framed version of the Birman theorem for framed links in plat representation.
\end{abstract}

\section{Introduction}
\label{sec:intro}
Framed knots and links are an extension of knots and links that one can visualize as closed loops of knotted and linked flat ribbons. They offer a very useful presentation of closed, connected  and orientable (c.c.o.) 3-manifolds due to the Lickorish-Wallace theorem, stating that every such manifold can be obtained by Dehn surgery on a framed link. The Kirby calculus provides a set of moves on framed link diagrams that present homeomorphic c.c.o. 3-manifolds. Framed braids were introduced in 1992 by Smolinski and Ko in order to describe framed links as closures of framed braids and provide the framed braid theoretic analogue of Kirby calculus.  \\
\indent  As a result, and in a manner similar to classical links and braids, it was natural to consider first framed versions of the classical Alexander and Markov theorems. The Alexander  theorem 
states that every oriented knot or link can be isotoped to the standard closure of some braid, while the Markov theorem provides the equivalence relation between braids that correspond to isotopic knots or links. This paper begins with exploring variations of these theorems, such as a framed version of the $L$-move theorem by Lambropoulou and Rourke, a one-move theorem refining the Markov theorem. \\
\indent In a furhter development, in 1975 Hilden introduced a family of subgroups of the classical braid groups with even number of strands, now known as the Hilden groups, whose elements have the property that their plat closures are isotopic to the plat closures of the corresponding identity braids. It is easily seen that every  knot or link can be isotoped to the plat closure of some braid. Subsequently, in 1976 Birman, using the Hilden groups, proved an analogue of the Markov theorem for isotopic links in plat representation. \\ 
\indent In this paper we continue with exploring the plat closure of framed braids. We define the framed Hilden groups and give a group presentation analogous to Tawn's presentation for the Hilden groups. We further define the pure framed Hilden groups for which we also provide a presentation. Finally, we state and prove a framed version of the Birman theorem concerning plat representations of  framed links.\\
\indent The paper is organized as follows. In Sections \ref{sec:preliminaries} and \ref{sec:stand_closure} we recall the main definitions and results on classical links and braids and the theory connecting the two notions via the standard closure of braids. We describe the $L$-moves as introduced in \cite{lambropoulou1997markov} and give an equivalent definition. In Sections \ref{sec:framed} and \ref{sec:framed_closure} we present definitions and discuss the various presentations (blackboard or integer) for framed links and framed braids. In Section \ref{sec:framed_closure} we introduce the framed version of the $L$-moves on braid diagrams and define the framed $L$-equivalence for framed braid diagrams and framed links in blackboard framing. In Section \ref{sec:Hilden_plat} we recall the theory of links in plat representation. We recall presentations for the Hilden groups and discuss the Birman theorem. In Section \ref{sec:framedHilden_plat} we define the framed Hilden and pure framed Hilden groups and give presentations with generators and relations. Finally, in Section \ref{sec:framed_birman} we present a framed version of the Birman theorem.

\section{Preliminaries: Classical Braids and Classical Links}
\label{sec:preliminaries}

\subsection{Classical Braids}

Consider in $\mathbb{R}^3$, or its compactification $S^3$, the sets of points $\{A_i=(i,0,0)|i=1,\ldots ,n\}$ and $\{B_j=(j,0,1)|j=1,\ldots ,n\}$. A \textit{strand} connecting a $B_j$ to an $A_i$ is a simple non-intersecting arc that when traversed from $B_j$ to $A_i$ the $z$-coordinate decreases monotonically. In other words, each horizontal plane intersects with the strand in exactly one point. The points $A_i$ and $B_j$ shall respectively be called the \textit{lower} and \textit{upper} endpoints of the strand. A \textit{geometric braid on n strands} is defined as a collection of pairwise non-intersecting $n$ strands joining $B_1,\ldots ,B_n$ to $A_1,\ldots ,A_n$ in any order.\\
\indent Two geometric braids on $n$ strands are called \textit{isotopic} if one can be continuously deformed into the other in the class of geometric braids. Isotopy between two geometric braids is an equivalence relation. The equivalence classes are called \textit{braids on $n$ strands}. We shall use the term \textit{(geometric) braid} to mean an equivalence class of braids and a concrete representative of such a class.\\
\indent Given two geometric braids $b_1, b_2$ one can obtain their product by putting them end to end. More precisely, we contract $b_1$ vertically to half its height while keeping the upper endpoints fixed, and we also contract $b_2$ vertically to half its height, only this time we keep the lower endpoints fixed. The lower endpoints of $b_1$ will now be of the form $(i,0,1/2)$, same as the upper endpoints of $b_2$. The union of the contracted braids is defined as the \textit{product of $b_1$ and $b_2$} and is denoted $b_1b_2$. The equivalence class of this geometric braid is defined as the product of the equivalence classes of $b_1$ and $b_2$. This operation turns the set of braids (as equivalence classes) on $n$ strands into a group, with the inverse of a braid $a$ being the mirror image of the (geometric) braid in the plane $\{z=1/2\}$ and the unit element being the braid with $n$ vertical strands connecting $B_i$ to $A_i$ for $i=1,\ldots ,n$. The group of equivalence classes of braids on $n$ strands is called the braid group $B_n$.

\noindent Given a braid $b$ there is a representative diagram that is a projection in $\mathbb{R} \times \{0\} \times [0,1]$ with the following properties: 

\begin{enumerate}
    \item The projections of the strands are not tangent to each other.
    \item No point in $\mathbb{R} \times \{0\} \times [0,1]$ is the projection of three or more points from different strands.
    \item Double points that have the same projection in $\mathbb{R} \times \{0\} \times [0,1]$ occur in different $z$-coordinates.
\end{enumerate}

\noindent This projection is called a \textit{braid diagram} of $b$. Two braid diagrams representing braids on $n$ strands are said to be \textit{equivalent} if they are connected by a finite sequence of plane isotopies that preserve the braid diagram structure and the braid isotopy moves between depicted in Fig.~\ref{fig:BraidIsotopy}. Again, like in geometric braids, we shall refer to the equivalence classes and a representative from this class as a braid diagram. Braid diagrams can be endowed with a natural orientation on the strands simply by making the strands point downwards (or upwards). Lastly we note that two braids are isotopic if and only if any two of their diagrams are equivalent.\\
\indent Since geometric braids form a group this is also true for braid diagrams. So, naturally, a group is defined on generators and relations between the generators. The classical generators in $B_n$ are denoted $σ_i$ for $i=1,\ldots ,n-1$ (see Fig. \ref{fig:braidisomorphism}).

\begin{figure}[htp]
    \centering
    \includegraphics[width=9cm]{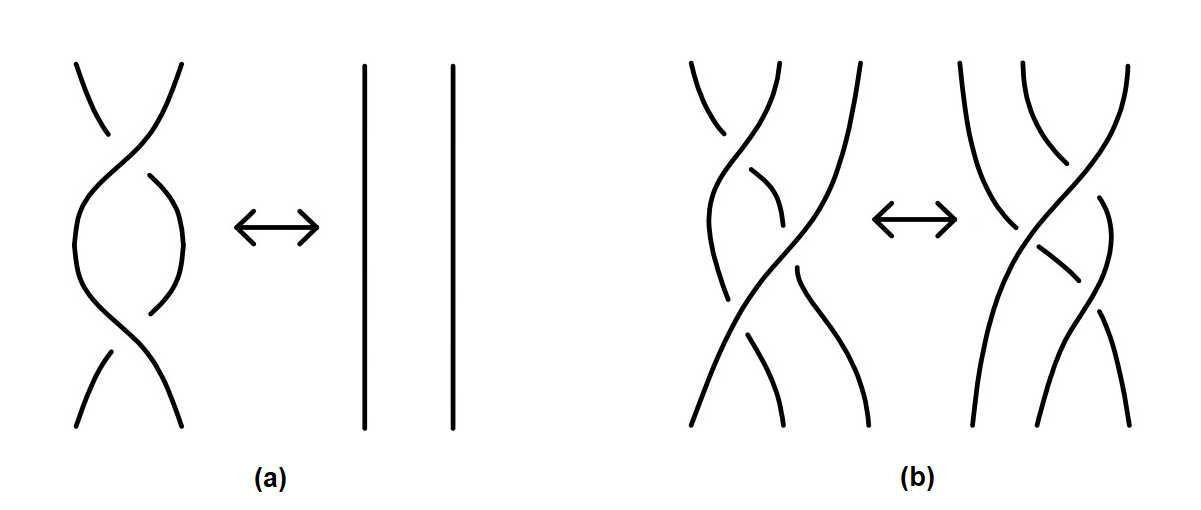}
    \caption{Braid isotopy moves.}
    \label{fig:BraidIsotopy}
\end{figure}

\begin{theorem}[Artin]
\label{Artin}
The group $B_n$ is characterized algebraically to be the group with presentation:
\[ 
B_n = 
    \Biggl\langle 
       \begin{array}{l|ccl}
            σ_1,\ldots ,σ_{n-1} & σ_iσ_jσ_i = σ_jσ_iσ_j  &  \forall \, i,j \quad \textrm{so that} \quad |i-j|=1  \\
                            & σ_iσ_j = σ_jσ_i   &  \forall \, i,j \quad \textrm{so that} \quad |i-j| \ge 2
        \end{array}
     \Biggr\rangle
     \]
\end{theorem}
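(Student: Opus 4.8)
The plan is to prove the two halves of the claimed isomorphism separately. Write $G_n$ for the group presented by the generators and relators displayed in the statement, and let $\phi\colon G_n\to B_n$ be the homomorphism sending each generator $\sigma_i$ to the elementary braid of the same name. First I would check that $\phi$ is well defined and surjective. Well-definedness amounts to verifying the two relation families among elementary braids: the far-commutation $\sigma_i\sigma_j=\sigma_j\sigma_i$ for $|i-j|\ge 2$ is an evident isotopy, the two crossings being supported in disjoint vertical strips, while the braid relation $\sigma_i\sigma_{i+1}\sigma_i=\sigma_{i+1}\sigma_i\sigma_{i+1}$ is exactly the move sliding a strand across a crossing, already among the braid isotopy moves of Fig.~\ref{fig:BraidIsotopy}. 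For surjectivity, perturb an arbitrary braid diagram into general position so that its crossings lie at pairwise distinct heights; slicing by horizontal lines between consecutive crossings exhibits the braid as a product of elementary braids $\sigma_i^{\pm1}$, hence it lies in the image of $\phi$.

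The substance is injectivity of $\phi$, which I would establish by induction on $n$, exploiting the common quotient $S_n$. For $n\le 2$ both $G_n$ and $B_n$ are free of rank $n-1$ (so trivial, resp.\ infinite cyclic), and $\phi$ is visibly an isomorphism. For the inductive step, the endpoint-permutation map $B_n\to S_n$ and the map $G_n\to S_n$ sending $\sigma_i$ to the transposition $(i,\,i{+}1)$ --- the transpositions satisfy both relation families --- fit into a commutative ladder of short exact sequences
\[
\begin{array}{ccccccccc}
1 & \longrightarrow & U_n & \longrightarrow & G_n & \longrightarrow & S_n & \longrightarrow & 1\\
  &                 & \downarrow\phi' &  & \downarrow\phi &  & \| &  & \\
1 & \longrightarrow & P_n & \longrightarrow & B_n & \longrightarrow & S_n & \longrightarrow & 1,
\end{array}
\]
with $P_n$ the pure braid group; since $\phi$ is already known surjective, a diagram chase reduces the problem to showing that $\phi'\colon U_n\to P_n$ is an isomorphism. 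To obtain a presentation of $U_n$ I would run the Reidemeister--Schreier process on the presentation of $G_n$ relative to the finite-index subgroup $U_n$, with a standard Schreier transversal of reduced words for $S_n$; the rewritten generators can be organized into elements $A_{ij}$ for $1\le i<j\le n$. Independently, a presentation of $P_n$ on the geometric generators $A_{ij}$, subject to the pure-braid relations, is available from the split Fadell--Neuwirth fibration $F_{n-1}\hookrightarrow P_n\twoheadrightarrow P_{n-1}$ (from forgetting the last strand, the fibre having free fundamental group) together with the inductive hypothesis for $P_{n-1}$. Matching the two presentations generator-by-generator and relator-by-relator, and noting that $\phi'$ sends the combinatorial $A_{ij}$ to the geometric one, yields the desired isomorphism.

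I expect the main obstacle to be the Reidemeister--Schreier bookkeeping: choosing the transversal so that the rewritten generators remain tractable, and checking that after Tietze transformations the rewritten relators are precisely the pure-braid relations, with nothing extra surviving. A more topological route that avoids this is to identify $B_n$ with the fundamental group of the unordered configuration space of $n$ points in $\mathbb{C}$ and to compute that group from a $2$-complex for the space --- for instance the Salvetti-type complex of the braid arrangement, or directly by iterating the map that forgets one point --- reading the generators $\sigma_i$ and the two relation families off the $1$- and $2$-cells; this trades the combinatorial labour for the homotopy-theoretic input that configuration spaces of $\mathbb{C}$ are aspherical with the expected cell structure. Yet another possibility is Artin's original argument, via the faithful action of $B_n$ on the free group $F_n$ and the ``combing'' normal form for pure braids. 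In every variant the non-formal content is the same: ruling out any relation beyond the two stated families.
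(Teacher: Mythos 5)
The paper itself offers no proof of this theorem: it is quoted as Artin's classical result and used as background, so there is nothing in-paper to compare your argument against. Judged on its own terms, your outline follows the standard modern route and its formal skeleton is sound: the homomorphism $\phi\colon G_n\to B_n$ from the presented group is well defined because both relation families are realized by braid isotopies; surjectivity follows from putting a diagram in general position with crossings at distinct heights and slicing; and the commutative ladder of extensions by $S_n$, together with the short five lemma, correctly reduces injectivity of $\phi$ to showing that the induced map $\phi'\colon U_n\to P_n$ on kernels is an isomorphism (its surjectivity indeed follows from that of $\phi$ by a chase). The base cases $n\le 2$ are also correct.

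The gap is that everything constituting the actual content of Artin's theorem is deferred rather than carried out. That the Reidemeister--Schreier rewriting of $G_n$ over a Schreier transversal for $S_n$ yields precisely the pure-braid presentation, and, independently, that $P_n$ admits that presentation via the split Fadell--Neuwirth fibrations (which requires the fibre sequence for configuration spaces, the splitting, asphericity or at least exactness of the relevant portion of the homotopy sequence, and an induction that must be meshed with the one you are already running on $n$) is exactly where the theorem lives; in your text these appear as named tasks with anticipated difficulties, not as arguments. The same holds for the alternative routes you list (Salvetti-type complex for the braid arrangement, or Artin's faithful action on $F_n$ with the combing normal form): each is a legitimate strategy from the literature (Artin, Birman, Kassel--Turaev), but each imports the hard part wholesale. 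So what you have is a correct and well-organized plan consistent with known proofs, not yet a proof; since the paper only cites the theorem, that is adequate as orientation, but the injectivity step would need to be executed in full before this could stand as a demonstration that no relations beyond the two stated families hold.
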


\begin{figure}[htp]
    \centering
    \includegraphics[width=5cm]{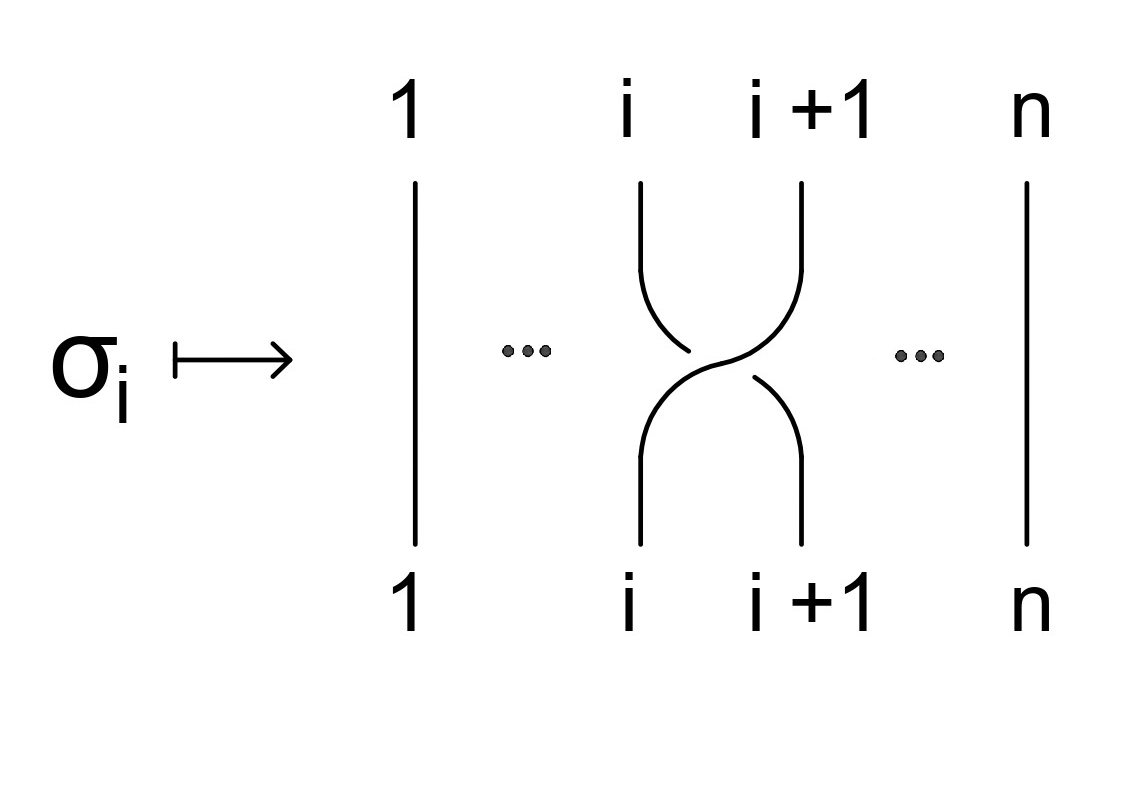}
    \caption{The Artin generators.}
    \label{fig:braidisomorphism}
\end{figure}

\noindent Theorem \ref{Artin} allows us to look at braids both geometrically and algebraically.

\subsection{Classical Knots and Links}

By a \textit{knot} we mean an embedding of $S^1$ into $S^3$ (or $\mathbb{R}^3$). Two knots $K_1, K_2$ are said to be \textit{isotopic} if there is an orientation preserving homeomorphism $f:S^3 \to S^3$ so that $f(K_1)=K_2$, while a \textit{link} in $S^3$ of $c$ components is an embedding of $c$ copies of $S^1$. Of course a knot is a link with one component. 
 A \textit{knot diagram} $D$ of a knot $K$ is a projection of $K$ into $S^2$ (or $\mathbb{R}^2$) so that at most two points are projected to the same point in $S^2$. This projection is also endowed with over/under information on the double points. Two knot diagrams $D_1,D_2$ represent isotopic knots if and only if they are connected by a finite sequence of the Reidemeister I, II and III moves and plane isotopies (see Fig.~\ref{fig:reidemeister}).

\begin{figure}[htp]
    \centering
    \includegraphics[width=11cm]{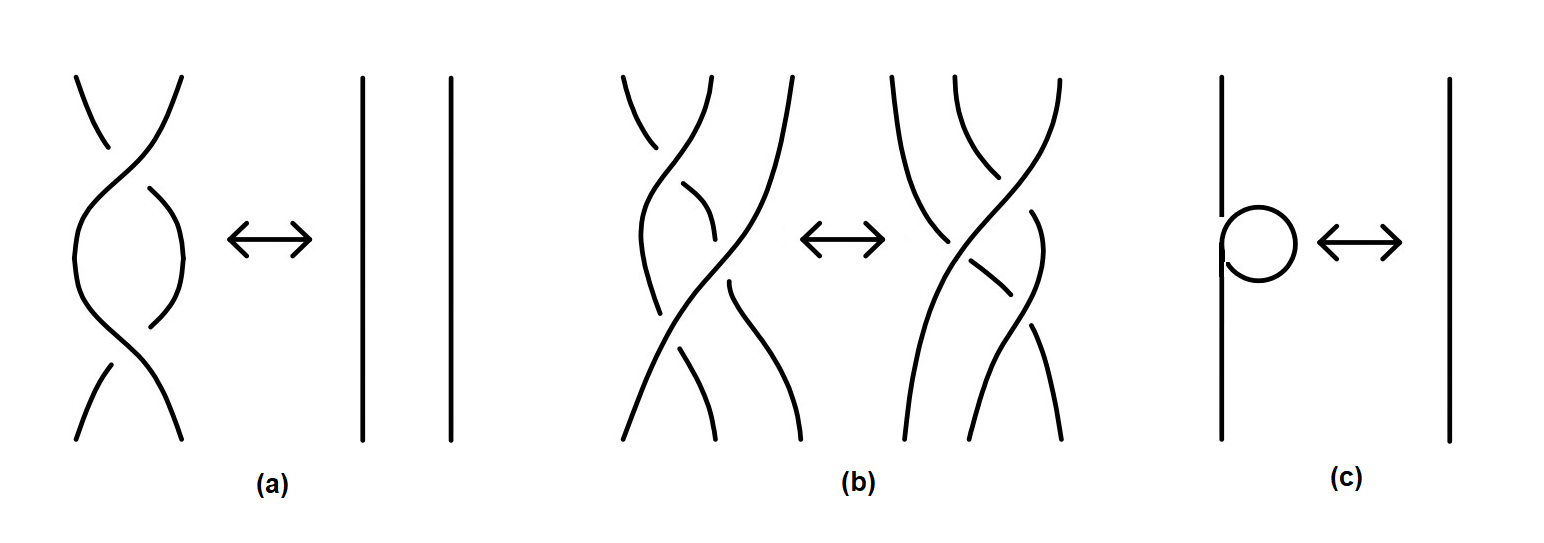}
    \caption{The Reidemeister moves.}
    \label{fig:reidemeister}
\end{figure}

Given a braid $b$ and a braid diagram $D$ of $b$ one can produce an oriented link diagram from $D$ by connecting the corresponding upper and lower endpoints of the braid diagram with simple arcs, as abstracted in Fig.~\ref{fig:closure}. This link diagram is called the (standard) \textit{closure} of $D$ and is denoted $\hat{D}$, see Fig.~\ref{fig:closure}. The isotopy class of $\hat{D}$ shall be called the \textit{closure of $b$}.

\begin{figure}[htp]
    \centering
    \includegraphics[width=8cm]{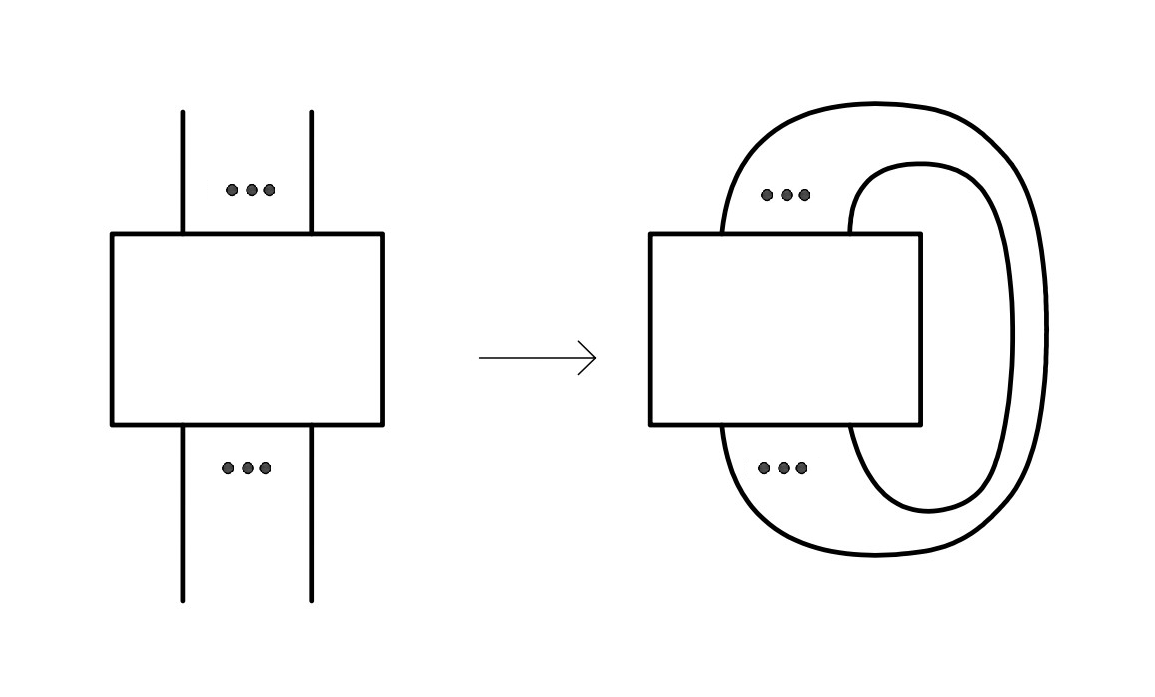}
    \caption{The standard closure of a braid.}
    \label{fig:closure}
\end{figure}

\section{Framed links and framed braids}
\label{sec:framed}

A framed knot can be described in various ways. We can picture framed knots as a closed knotted, twisted orientable band or ribbon, or we can picture it as a non-null homologous simple curve on the boundary surface of a solid torus or simply as a classical knot with extra information. We present these various depictions of framed knots (and links) and how one is connected to the other.

\subsection{Representations of framed links}

A \textit{solid torus}, $V$, is a space homeomorphic to $S^1 \times D^2$, i.e. $V=h(S^1 \times D^2)$ for some homeomorphism $h$. The curve $h(S^1 \times 0)$ is called the \textit{core of $V$} (see $K$ in fig \ref{fig:framed tref}). A \textit{meridian} of $V$ is a non-contractible, simple, closed curve on $\partial V$ that bounds a disc in $V$. A \textit{longitude} of $V$ is a non-contractible, simple closed curve on $\partial V$ that intersects transversely some meridian of $V$ in a single point.\\
\indent It is known that the fundamental group of the boundary $\partial V$ of the solid torus $V$ is the free abelian group on two generators $\mathbb{Z} \times \mathbb{Z}$. Therefore one can define a longitude for $V$ as a simple, closed curve on $\partial V$ that is of the form $(k,1)$ in the fundamental group of the boundary $\partial V$, $k \in \mathbb{Z}$. A meridian is a simple, closed curve of the form $(1,0)$.

\begin{definition} Let $D$ be a diagram of an oriented link $L$ and let $c_1, c_2$ be two components of $D$. The \textit{linking number} between $c_1$ and $c_2$ is the total number of positive crossings minus the total number of negative crossings, divided by $2$,  over all crossings between $c_1$ and $c_2$. A \textit{positive} crossing is when one has to rotate the over arc counterclockwise, in order to align the two arcs involved, so that they both  point to the same direction. A \textit{negative} crossing is when the upper strand has to rotate clockwise. The linking number counts the algebraic number of times one component winds around the other one. Further, it is an ambient isotopy invariant as it respects Reidemeister II and III moves and it is not affected by the Reidemeister I move. Thus we talk about the \textit{linking number of two link components} as the linking number between them in any diagram representative of the link.    
\end{definition}

\begin{figure}[htp]
    \centering
    \includegraphics[width=5cm]{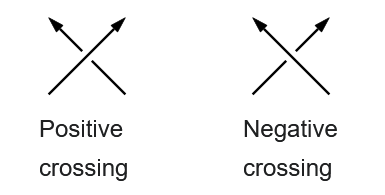}
    \caption{The two types of crossing in a link diagram.}
    \label{fig:crossings}
\end{figure}

\begin{definition}[Framing]
Let $K$ be an oriented knot in $S^3$ (or $\mathbb{R}^3$). Then $K$ has a regular neighbourhood in $S^3$ that is a solid torus $S^1 \times D^2 = V$, where $K$ is the core of $V$. Let $K_0$ be a longitude in $V$ with the same orientation as $K$. The linking number $lk(K,K_0)$ between $K$ and $K_0$, is called a \textit{framing} of $K$. For an example view Fig.~\ref{fig:framed tref}a. A \textit{framed link} of $c$-components is a link of whom each one of the $c$ components is a framed knot. To a framed link of $c$ components one can assign a framing vector $\vec{λ}=(λ_1,\ldots ,λ_c)$, where $λ_i \in \mathbb{Z}$ is the framing of the $i$-th component.
\end{definition}

\begin{figure}[htp]
    \centering
    \includegraphics[width=10cm]{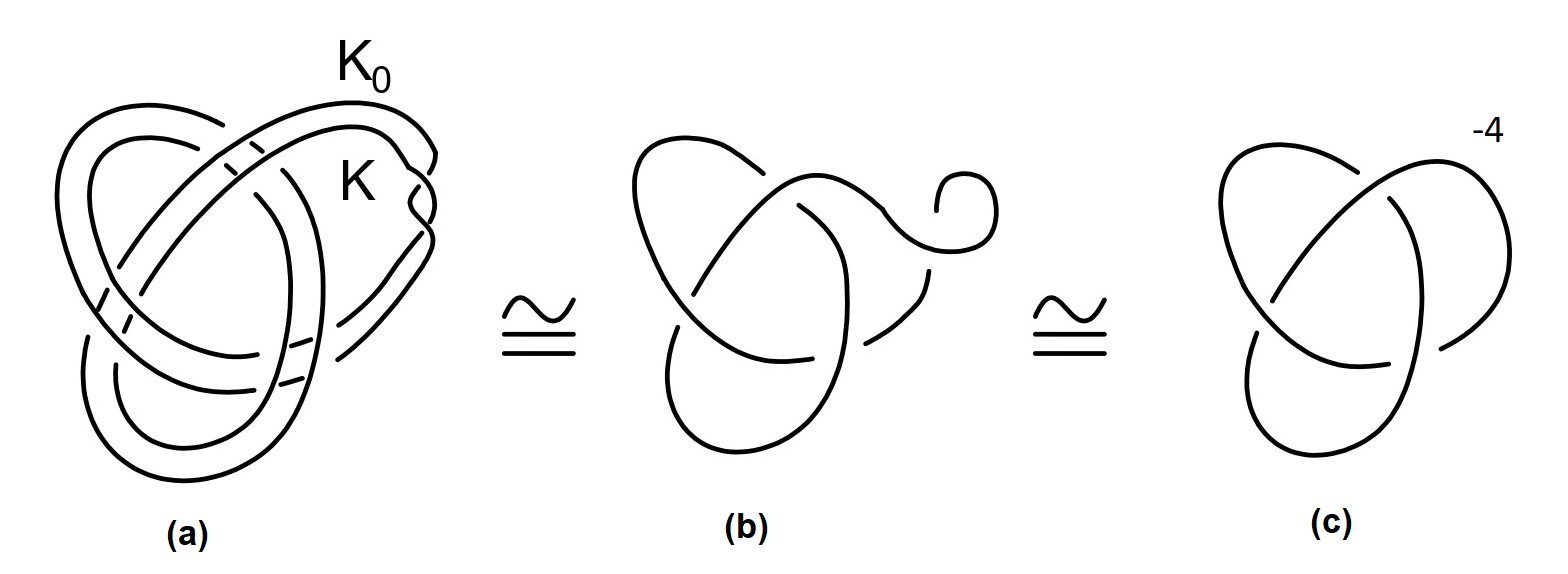}
    \caption{Different framing representations of a trefoil with framing -4.}
    \label{fig:framed tref}
\end{figure} 

We note that a framing for a knot can be any integer, as it also depends on how many times the longitude follows a meridian of $V$, which is the first coordinate in the fundamental group $\mathbb{Z} \times \mathbb{Z}$ of the torus $\partial V$.

\noindent Another standard way of representing the framing of a link is as follows:
\begin{definition}
Given an oriented knot diagram $D$, one can always assign a framing by taking the writhe of $D$ as the framing. This is called the \textit{blackboard framing} of $D$. By \textit{writhe} we mean the total number of positive crossings minus the total number of negative crossings of $D$. For an example view Fig.~\ref{fig:framed tref}b.
\end{definition}

\noindent  We observe that the blackboard framing of $D$ can change by adding to it a number of positive or negative curls. It is easy to confirm that the two definitions of framing are equivalent. Indeed, given an oriented knot $K$, a diagram $D$ of $K$ and a framing $λ$ of $K$, one can always draw a diagram of the corresponding longitude $K_0$ by the following steps: 

\begin{enumerate}
    \item Draw another curve $D_0$ $ε$-close and parallel to $D$, and following the same over/under pattern of $D$.
    \item The linking number between $D$ and $D_0$ is equal to the writhe $w(D)$ of $D$. If $λ$ is different than $w(D)$, let us say $w(D)<λ$, then we link the two diagrams on an arc of $D$ with no crossings, by adding $2(λ-w(D))$ positive consecutive crossings between $D$ and $D_0$ (see Fig.~\ref{fig:framed tref}a).
\end{enumerate}

\begin{figure}[htp]
    \centering
    \includegraphics[width=8cm]{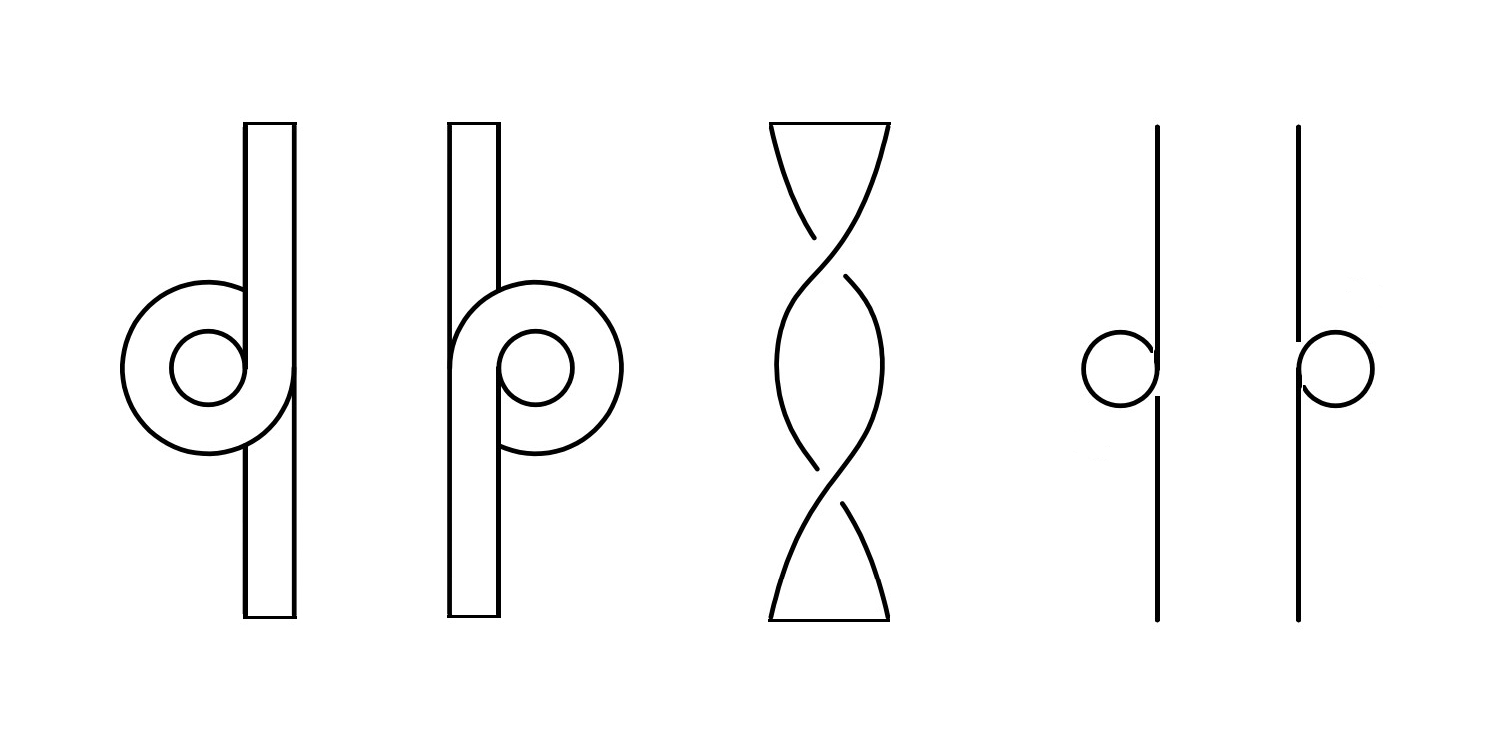}
    \caption{Flattening a positive twist and contracting it to its center-line.}
    \label{fig:flat}
\end{figure} 

A framed link can be equivalently viewed as a link of ribbons instead of solid tori. Indeed, the core curves together with their corresponding longitudes determine the boundary of a link of ribbons in three dimensional space. Conversely, given a link of ribbons one can choose, without any loss of generality, one of the two boundary components as the core curve of a knotted torus and the other boundary component as the corresponding longitude. Any twist of the ribbon, be it positive or negative, corresponds to a full run along the meridian of the solid torus $V$. Since the longitudes follow their respective core curves, all the information of the framed link can be depicted as a link of longitudes with extra curls representing the times a longitude runs along the meridian. See for example Fig. \ref{fig:framed tref}, where $K$ is the core curve, $K_0$ is a longitude that runs once along a meridian in the negative direction. The resulting framed knot is a framed trefoil with framing $w(K)-1=-3-1=-4$.\\
\indent Isotopy of framed link diagrams in $ \mathbb{R}^2 $ consists of the second and third Reidemeister moves, planar isotopies and a modified version of the first Reidemeister move as depicted in Fig. \ref{fig:modRI}. However if we consider diagrams in $S^2$ this move is not necessary as it is the result of sphere isotopies and Reidemeister moves II and III.  Recall that  diagrammatic equivalence  under Reidemeister moves II and III and planar/surface isotopy is named \textit{regular isotopy} due to L.H. Kauffman \cite{kauffman1990invariant}. It is easy to see that framed link isotopy keeps the framing of a knot fixed. The first Reidemeister move is not allowed in framed/ribbon link diagrams because a positive (or negative) twist would result in a $\pm1$ change in the framing. However, two consecutive curls, a positive and a negative, are regular isotopic to an uncurled arc (see Fig. \ref{fig:cancelling}).

\begin{figure}[htp]
    \centering
    \includegraphics[width=6cm]{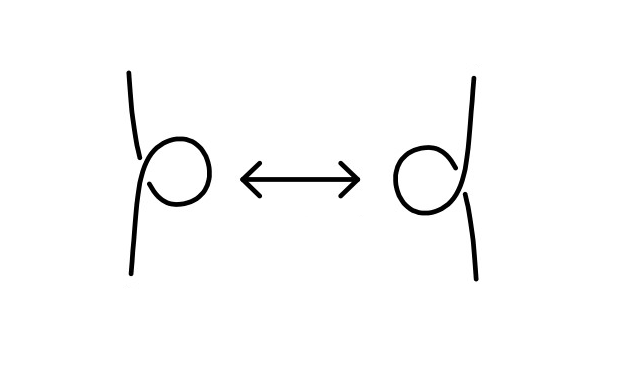}
    \caption{The modified Reidemeister I move.}
    \label{fig:modRI}
\end{figure}

\begin{figure}[htp]
    \centering
    \includegraphics[width=6cm]{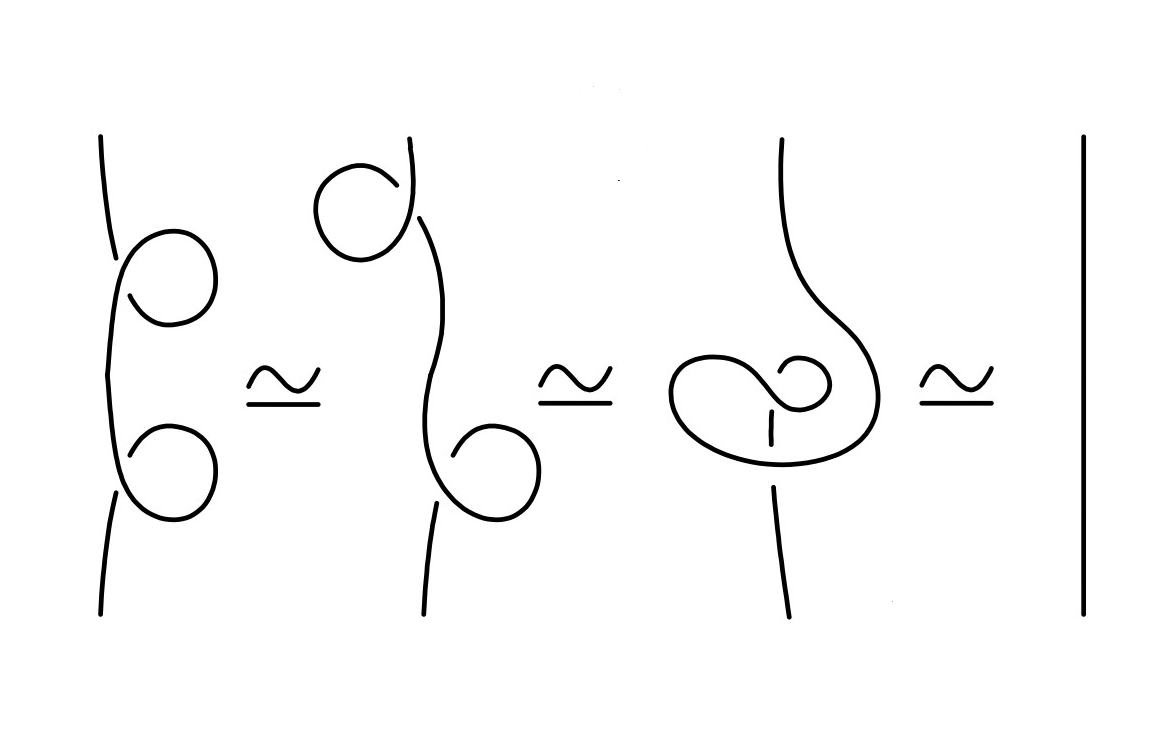}
    \caption{Canceling two opposite curls.}
    \label{fig:cancelling}
\end{figure} 

\subsection{Framed braids}

Geometric framed braids can be described in analogy with classical geometric braids. The role of endpoints in a classical braid will be carried out by two families of intervals in $\Bbb{R}^3$ (or $S^3$), $A_i = [i,i+1/2] \times \{0\} \times \{0\}$ and $B_j = [j,j+1/2] \times \{0\} \times \{1\}$, for $i,j=1,\ldots ,n$. A \textit{band} or a \textit{ribbon} connecting a $B_j$ interval to an $A_i$ for some $i,j \in \{1,\ldots ,n\}$ is a set parametrized as $γ \times [0,1]$, where $γ \subset \Bbb{R}^3$ (or $S^3$) is a braid strand with upper endpoint $\{j+1/4\} \times \{0\} \times \{1\}$ and lower endpoint $\{i+1/4\} \times \{0\} \times \{0\}$. A  \textit{ geometric framed braid} (or \textit{ribbon braid}) on $n$ ribbons is defined as a collection of pairwise non-intersecting $n$ ribbons joining $B_1,\ldots ,B_n$ to $A_1,\ldots ,A_n$ in any order. The main difference from classical braids is that ribbons, unlike strands, can be twisted around their core strand by half twists or full twists. In our case, ribbons are not allowed to have half twists around their core, so if a ribbon twists around its core it must only do so by full twists. Framed braids can be endowed with a natural orientation on the ribbons simply by orienting the core curves downwards (or upwards).

\begin{definition}
The number of full twists a ribbon has is called the \textit{framing of the ribbon}. Furthermore, the {\it framing of a (geometric) framed braid} is defined by a framing vector with the framings of the ribbons as components (see Fig. \ref{fig:FramedBraid}a for an example).    
\end{definition} 

Two geometric framed braids on $n$ ribbons are called \textit{isotopic} if one can be continuously deformed into the other in the class of geometric framed braids, preserving the framing of each ribbon. Isotopy between two geometric braids is an equivalence relation. The equivalence classes are called \textit{framed (or ribbon) braids on $n$ strands}. Again, like in the case of classical braids, we shall use the term \textit{(geometric) framed braid} to mean an equivalence class of framed braids and a concrete representative of such a class. Twists in a framed braid can be isotoped so that they are placed at the top (or bottom) of the framed braid. 

The natural product for classical geometric braids carries over to the geometric framed braids. Indeed, given two geometric framed braids $b_1, b_2$ one can obtain their product by putting them end to end. More precisely, we contract $b_1$ vertically to half its height while keeping the upper endpoint intervals fixed and we also contract $b_2$ vertically to half its height, only this time we keep the lower endpoint intervals fixed. The lower endpoints of $b_1$ will now be of the form $(i,0,1/2)$, same as the upper endpoints of $b_2$. The union of the contracted framed braids is defined as the \textit{product of $b_1$ and $b_2$} and is denoted $b_1b_2$. The equivalence class of this geometric framed braid is defined as the product of the equivalence classes of $b_1$ and $b_2$. This operation turns the set of braids (as equivalence classes) on $n$ strands into a group, with the inverse of a braid $a$ being the mirror image of the (geometric) braid in the plane $\{z=1/2\}$ and the unit element being the framed braid with $n$ vertical untwisted ribbons connecting $B_i$ to $A_i$ for $i=1,\ldots ,n$, so their core curves is the identity in classical braids. Note that the {\it inverse of a ribbon} that twists around its core counterclockwise is the inverse of its core, as a strand of a classical braid, such that as a ribbon it twists counterclockwise around its core. The group of isotopy equivalence classes of framed braids on $n$ strands is called the {\it framed braid group}, denoted $RB_n$.

In order to discretize the theory (as with classical braids), given a framed braid $b$ there is a representative regular diagram that is a projection in $\mathbb{R} \times \{0\} \times [0,1]$ with the following properties: 

\begin{enumerate}
    \item The projections of the ribbons are not tangent to each other and do not overlap with each other unless two ribbons cross transversely.
    \item No crossing region in $\mathbb{R} \times \{0\} \times [0,1]$ is the projection of three or more regions from different ribbons.
    \item Crossing regions that have the same projection in $\mathbb{R} \times \{0\} \times [0,1]$ occur in different heights on the $z$-axis.
    \item A positive twist shall be consistently projected as either one of the first three ribbon diagrams depicted in Fig. \ref{fig:flat}. Analogously,  negative twists shall be consistently projected in the same way but with mirrored crossings and crossing regions.
\end{enumerate}

This projection is called a \textit{ framed braid diagram} of $b$. Two framed braid diagrams representing framed braids on $n$ strands are said to be \textit{equivalent} if they are connected by a finite sequence of plane isotopies that preserve the braid diagram structure and the framed braid isotopy moves  depicted in Fig. \ref{fig:BraidIsotopy}, only this time for flat ribbons instead of strands plus the equivalence depicted in Fig. \ref{fig:flat2}. Projecting twists that way, every framed braid diagram can be visualized lying flat on a plane. Again, like in geometric framed braids, we shall refer to the equivalence classes of framed braid diagrams   and any representative of a class as a \textit{framed braid}. Two geometric framed braids are isotopic if and only if any two of their diagrams are equivalent.

\begin{figure}[htp]
    \centering
    \includegraphics[width=6cm]{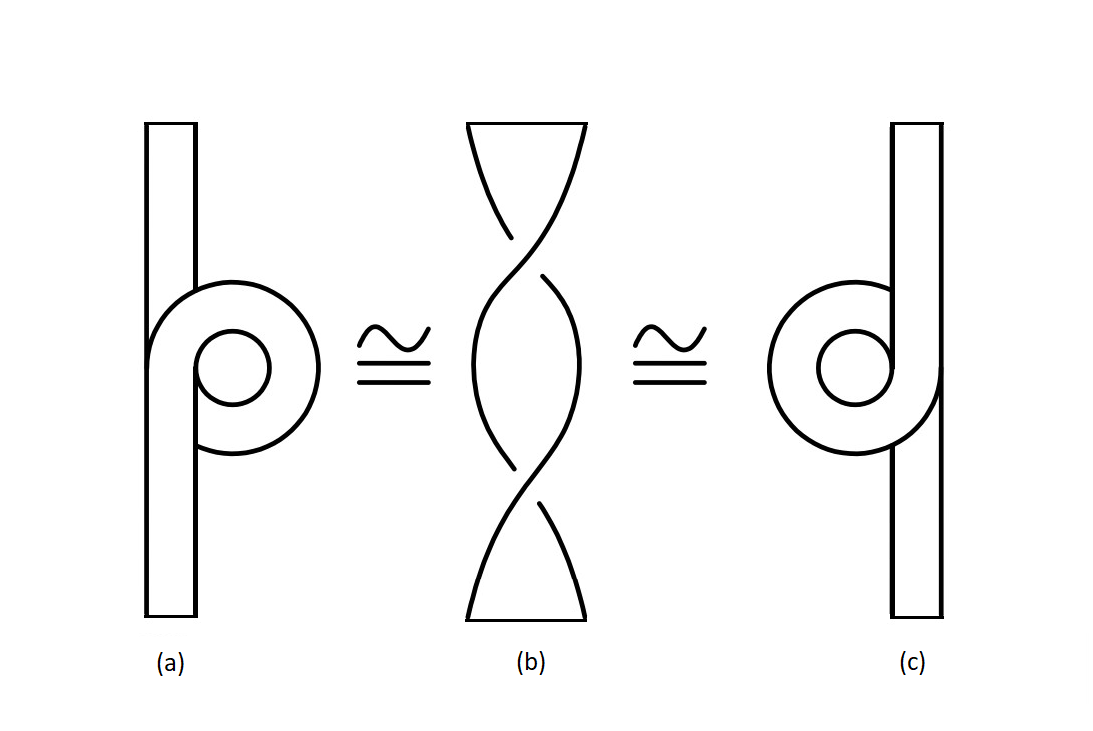}
    \caption{Framed braid diagram isotopy for twists.}
    \label{fig:flat2}
\end{figure} 

Furthermore, there is a way to represent framed braid diagrams as diagrams with classical strands instead of ribbons, via the blackboard framing. By taking a framed braid diagram and projecting every twist flat on the plane of projection, as in Fig. \ref{fig:flat2}, one can contract every ribbon to its center-line, thus portraying ribbons as strands with curls. These curls represent the way a ribbon twists around its center-line and therefore contribute to the blackboard framing, see for example Fig.~\ref{fig:FramedBraid}b. One can also remove the curls from the diagram and attach an integer on each strand representing the total algebraic number of curls on the strand, using framed braid isotopy for collecting the framing to the top of the ribbon. See Fig. \ref{fig:FramedBraid}c for an example.\\
\indent Geometric framed braids form a group and this is also true for framed braid diagrams. Since framed braid diagrams can be represented as classical framed diagrams with curls, the braid groups can be injected into the framed braid groups (any braid can be thought of as a framed braid with zero framing, i.e. with no curls). This means that the classical generators in $B_n$, that we denoted $σ_i$ for $i=1,\ldots ,n-1$, carry over to the framed braid groups, along with their relations. However, additional generators and relations are required to describe the twisting of the ribbons. We denote the additional generators $t_i$, for $i=1,\ldots ,n$ (see Fig \ref{fig:FramedIso}).

\begin{figure}[htp]
    \centering
    \includegraphics[width=8cm]{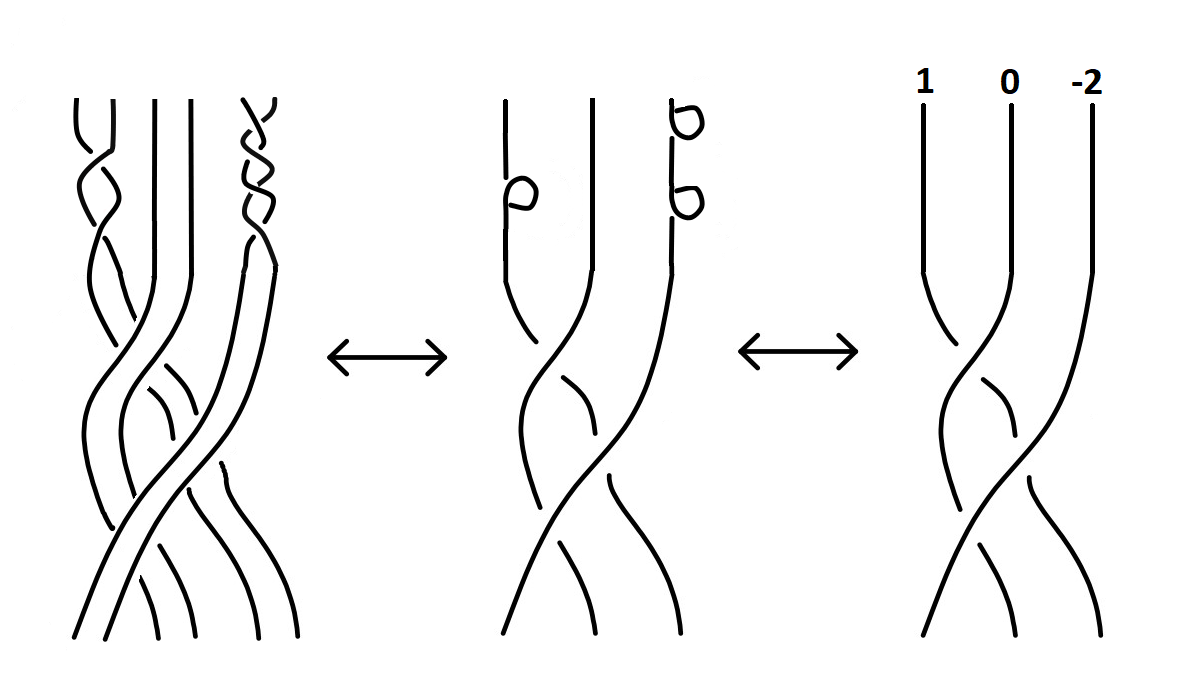}
    \caption{Different equivalent presentations of a framed braid diagram  with three ribbons: (a) as a ribbon diagram; (b) as a classical braid with blackboard framing; and (c) with integer framing (1,0,-2).}
    \label{fig:FramedBraid}
\end{figure} 

\noindent The following is a corollary of Theorem \ref{Artin} (cf.  \cite{ko1992framed} for further details).

\begin{corollary}
The framed braid group of $n$ ribbons will be denoted $RB_n$ and is defined algebraically by the following presentation, where $s_i$ denotes the transposition $(i,i+1) \in S_n$. 

\end{corollary}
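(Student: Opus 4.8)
The plan is to identify $RB_n$ with the semidirect product $\mathbb{Z}^n \rtimes B_n$, where $B_n$ acts on $\mathbb{Z}^n = \langle t_1,\ldots,t_n\rangle$ by permuting coordinates through the natural epimorphism $\pi\colon B_n \to S_n$, $\sigma_i \mapsto s_i$, and then to read off the presentation from the standard recipe for a presentation of a semidirect product.

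First I would set up the geometric decomposition. Using framed braid isotopy (Fig.~\ref{fig:BraidIsotopy} for flat ribbons together with the twist equivalence of Fig.~\ref{fig:flat2}), every framed braid diagram can be isotoped so that all of its twists are collected at the top of the braid; cancelling opposite curls (Fig.~\ref{fig:cancelling}) then records, for each ribbon, a single integer, namely its total framing. Thus every framed braid is represented in the form $(t_1^{a_1}\cdots t_n^{a_n})\,\beta$ with $\beta$ an untwisted (i.e.\ classical) braid and $(a_1,\ldots,a_n)\in\mathbb{Z}^n$. The untwisted braids form a subgroup isomorphic to $B_n$ — a framed braid with zero framing is literally a classical braid, and the inclusion $B_n \hookrightarrow RB_n$ is injective because forgetting all twists is a retraction — while the pure-twist braids $t_1^{a_1}\cdots t_n^{a_n}$ form a free abelian subgroup of rank $n$, since twists on distinct ribbons are independent, twists on a single ribbon add, and a $\pm1$ pair cancels.

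Next I would compute the conjugation action. Sliding a full twist on the $i$-th ribbon downward through the elementary crossing $\sigma_i$ carries it onto the $(i+1)$-st ribbon, which is exactly the content of $\sigma_i t_i \sigma_i^{-1} = t_{i+1}$; symmetrically $\sigma_i t_{i+1}\sigma_i^{-1}=t_i$, and $\sigma_i t_j \sigma_i^{-1}=t_j$ for $j\neq i,i+1$ since disjoint ribbons do not interact. Hence $\beta t_k \beta^{-1} = t_{\pi(\beta)(k)}$, so $\mathbb{Z}^n$ is normal in $RB_n$ with complement $B_n$, and $RB_n = \mathbb{Z}^n \rtimes B_n$. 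It then follows from the standard presentation of a semidirect product — combine the generators and relations of $\mathbb{Z}^n$ (the relations $t_it_j=t_jt_i$), those of $B_n$ (the Artin relations of Theorem~\ref{Artin}), and the conjugation relations $\sigma_i t_j \sigma_i^{-1} = t_{s_i(j)}$ — that $RB_n$ admits the asserted presentation.

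The point that needs care, and which I expect to be the main obstacle, is completeness: showing that framed braid isotopy forces no relations beyond these. Equivalently, letting $G$ be the group given by the candidate presentation, one must check that the obvious map $G\to RB_n$ (well defined since every listed relation is realised geometrically, and surjective by the decomposition above) is also injective. For this I would push an arbitrary word of $G$ to the normal form $t_1^{a_1}\cdots t_n^{a_n}\beta$ using only the presentation relations — the conjugation relations move all the $t$'s to the left past the $\sigma$'s, the commuting relations then sort them — and observe that two such normal forms represent the same framed braid only if the framing vectors coincide (the framing of each ribbon is an isotopy invariant) and the underlying classical braids coincide (by Artin's theorem). Hence the normal form is unique in $G$ as well, so $G\to RB_n$ is injective, the presentation is exact, and writing $s_i$ for $\pi(\sigma_i)$ gives precisely the stated presentation.
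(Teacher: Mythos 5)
Your argument is correct: the decomposition into a framing vector plus an underlying classical braid, the conjugation action $\sigma_i t_j\sigma_i^{-1}=t_{s_i(j)}$, and the normal-form/injectivity check give exactly the semidirect product $\mathbb{Z}^n\rtimes B_n$ and hence the stated presentation. This is essentially the route the paper intends — it gives no proof itself, deferring to Ko--Smolinsky and immediately remarking that $RB_n\cong\mathbb{Z}^n\rtimes B_n$ — so your write-up simply supplies in detail the standard argument behind that citation.
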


\[
RB_n =  
    \Biggl\langle 
       \begin{array}{l|cl}
            σ_1,\ldots ,σ_{n-1} & σ_iσ_jσ_i = σ_jσ_iσ_j &  \forall \, i,j \quad \textrm{so that} \quad |i-j|=1
\\ t_1,\ldots ,t_n     & σ_iσ_j = σ_jσ_i  &  \forall \, i,j \quad \textrm{so that} \quad |i-j| \ge 2 \\
                            & t_it_j=t_jt_i &  \forall  \, i,j \in \{1,\ldots ,n \} \\
                            & σ_it_j = t_{s_i(j)}σ_i &  \forall \, i,j \in \{1,\ldots ,n \}
        \end{array}
     \Biggr\rangle\]

\begin{figure}[htp]
    \centering
    \includegraphics[width=4.5cm]{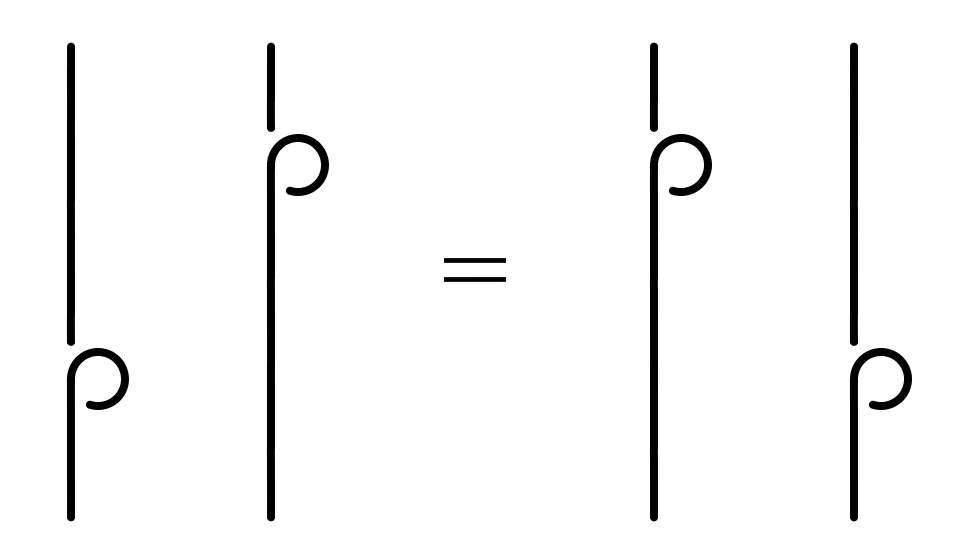}
    \caption{The third family of framed relations in the presentation of $RB_n$.}
    \label{fig:Framedrel2}
\end{figure} 

\begin{figure}[htp]
    \centering
    \includegraphics[width=4cm]{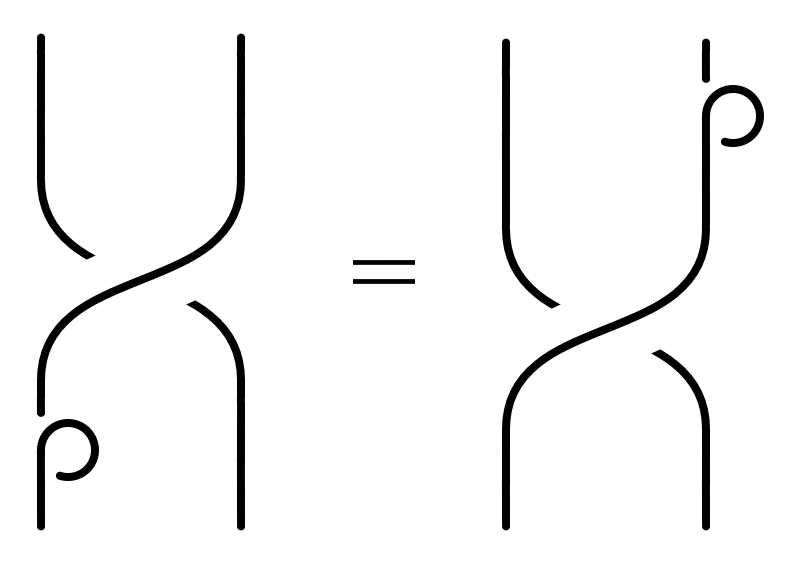}
    \caption{The fourth family of framed relations in the presentation of $RB_n$.}
    \label{fig:Framedrel1}
\end{figure} 

Note that $RB_n$ is isomorphic to the semi-direct product $ \mathbb{Z}^n \rtimes B_n $ also denoted $F_n$. Also note that, because of the fourth relation, any framed braid $b \in RB_n$ can be written in the form:
\\
\\
$$b= t_1^{λ_1}\ldots t_n^{λ_n}β \quad \textrm{where} \quad β \in B_n, \quad λ_1,\ldots ,λ_n \in \mathbb{Z} $$
\\
\\
\begin{figure}[htp]
    \centering
    \includegraphics[width=8cm]{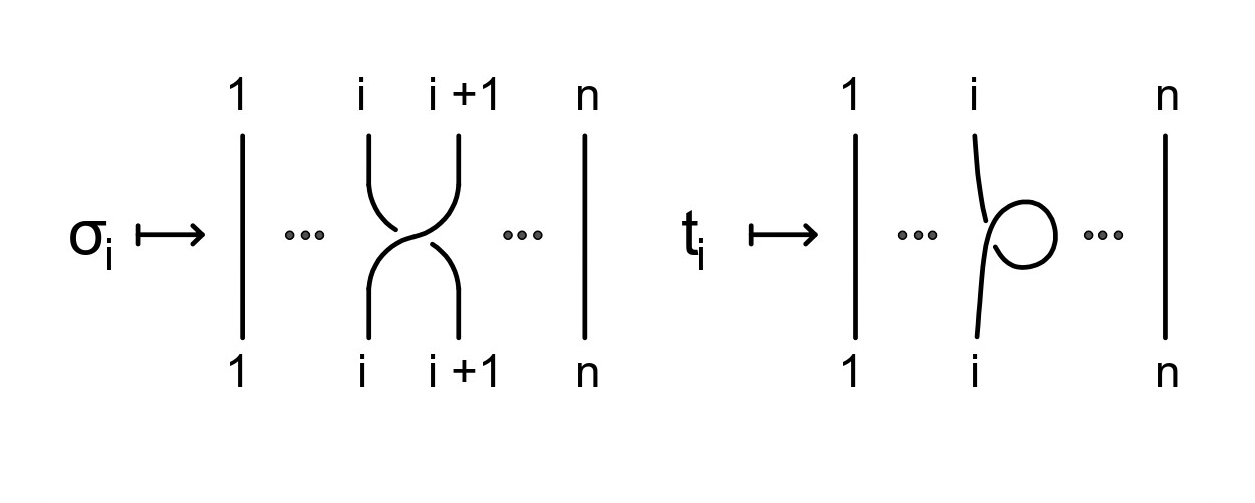}
    \caption{The generators of $RB_n$. The ribbons have been contracted to their center-lines.}
    \label{fig:FramedIso}
\end{figure} 
\\
\\


\section{Relationship between classical links and  braids via the standard closure}
\label{sec:stand_closure}

An important theorem regarding the closure of braids in due to Alexander \cite{alexander1923lemma}. Namely, every type of link can be obtained as the closure of a braid. 

\begin{theorem}[Alexander]
Any oriented link in $\mathbb{R}^3$ (or $S^3$) is isotopic to a closed braid.
\end{theorem}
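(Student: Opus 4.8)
The plan is to carry out Alexander's original \emph{braiding} argument. Fix a diagram $D$ of the oriented link $L$ in the plane $\mathbb{R}^2\subset\mathbb{R}^3$ (if $L\subset S^3$, first delete a point of $S^3\setminus L$ to reduce to $\mathbb{R}^3$), and choose a point $O\in\mathbb{R}^2$ not lying on $D$; this $O$ will be the projection of the prospective braid axis. Working in polar coordinates $(r,\theta)$ centred at $O$, the goal is to isotope $L$ until, along every component traversed in the direction of its orientation, the angular coordinate $\theta$ is strictly increasing. A diagram with this property is the standard closure of a braid: cut the plane along a ray emanating from $O$ and ``unroll'', so once we reach such a diagram we are done.

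First I would normalise $D$: after a small planar perturbation, assume that no arc of $D$ is radial (points directly toward or away from $O$) and that $\theta$ has only finitely many critical points along $D$. Then place subdivision vertices on $D$, away from the crossings, so that each resulting edge $e$ has $\theta$ strictly monotone along it and subtends an angle less than $\pi$ at $O$. Call $e$ \emph{positive} if $\theta$ increases along $e$ and \emph{negative} otherwise, and let $n(D)$ be the number of negative edges. If $n(D)=0$ we are finished; otherwise we induct on $n(D)$, exhibiting a move that lowers it by one.

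The \emph{braiding move}: choose a negative edge $e$, oriented from $P$ to $Q$, so that $\theta$ drops from $\theta(P)$ to $\theta(P)-\delta$ along $e$ for some $0<\delta<\pi$. Lift the interior of $e$ into the open upper half-space $\{z>0\}$ keeping $P,Q$ fixed; since the rest of $L$ lies in the plane $\{z=0\}$ and both arcs involved are unknotted in the half-space, we may slide this lifted arc to the arc $e'$ whose projection runs from $P$ \emph{the long way around} $O$, in the positive angular direction, to $Q$ (sweeping total angle $2\pi-\delta$ monotonically), and then push $e'$ back down so it lies just above $D$. This is an ambient isotopy of $L$; in the new diagram $D'$ the edge $e$ has been replaced by $e'$, every crossing of $e'$ with the rest of $D'$ is an over-crossing, and, crucially, $\theta$ is strictly increasing along $e'$ (pushing $e$ into $\{z<0\}$ works equally well, making those crossings under-crossings). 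No other edge is disturbed, so after re-subdividing $e'$ into short positive edges we obtain $n(D')=n(D)-1$. Iterating, the process terminates at a diagram all of whose edges are positive, i.e.\ a closed braid isotopic to $L$.

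I expect the delicate point to be not the combinatorial bookkeeping but the justification that the braiding move is a genuine ambient isotopy that strictly decreases $n(D)$: one must check that the replacement arc $e'$ can be taken simultaneously embedded and strictly monotone in $\theta$, that its endpoints match those of $e$ so that the link stays embedded throughout the isotopy, and that encircling $O$ does not retroactively spoil the sign of any other edge. The unknottedness of the two arcs in the half-space is what legitimises the move, and the monotonicity of $e'$ is exactly what makes $n$ a bona fide complexity driving the induction. (An alternative is the Yamada--Vogel route: apply Seifert's algorithm to $D$ and repeatedly perform Vogel reduction moves on pairs of incoherently nested Seifert circles until all Seifert circles are coherently nested, at which stage the diagram is manifestly a closed braid; this trades the ``lift over everything'' step for a more elaborate analysis of Seifert circles.)
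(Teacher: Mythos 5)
Your argument is Alexander's original braiding algorithm, which is exactly the approach the paper sketches: the paper phrases the winding condition via oriented triangles and $\Delta$-moves on a piecewise linear diagram, you phrase it via monotonicity of the polar angle; the induction on the number of badly winding edges and the rerouting of a bad edge the long way around the axis, over (or under) everything else, are the same in both versions.

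There is, however, one concrete gap in your braiding move. You subdivide only so that $\theta$ is monotone on each edge and the vertices avoid crossings, so a negative edge $e$ may still contain several crossings of mixed type: it may pass under one strand at one crossing and over another strand at a second crossing. In that case the asserted isotopy does not exist as stated: lifting the interior of $e$ into $\{z>0\}$ would drag it through the strand crossing over it, while pushing it into $\{z<0\}$ would drag it through the strand it crosses over, so neither of your two options is available (and the premise that ``the rest of $L$ lies in the plane $\{z=0\}$'' already fails near crossings). The standard repair is to strengthen the normalisation: require each edge to contain at most one crossing, or at least only crossings at which it is uniformly the over-strand or uniformly the under-strand; then the choice of throwing $e$ over or under is dictated by $e$ itself, the move is a genuine ambient isotopy, and your induction on $n(D)$ goes through. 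If instead you subdivide bad edges on the fly, note that this temporarily increases $n(D)$, so the subdivision should be performed once at the outset (as in Alexander's treatment and the paper's sketch) for $n(D)$ to serve as a strictly decreasing complexity.
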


The main idea behind the proof of this theorem (which is Alexander's original algorithm) is that, given an oriented link diagram and a reference point $o$ on the plane, one can use Reidermeister moves and plane isotopies in order to make the diagram wind counterclockwise (or clockwise) around the chosen reference point. The easiest way to do that is by making use of the fact that every smooth link is isotopic to a piece-wise linear link and vice versa. Then, an edge $e$ of the diagram will be said to wind counterclockwise around $o$ if the oriented triangle that is formed from connecting the endpoints of $e$ with $o$, with respect to the orientation of $e$ in the link, is oriented counterclockwise on the plane. If all edges wind counterclockwise around $o$ then we have a closed braid. If an edge $e'$ winds clockwise around $o$, then the triangle formed in the above way has a clockwise orientation on the plane, derived from the orientation of $e'$ in the link.  We can isotope this triangle with a small planar isotopy so that $o$ is in the interior of the triangle and perform a $Δ$-move (see Fig. \ref{fig:delta}), which induces on link diagrams the same equivalence relation as with the Reidemeister moves and planar isotopy, and replace $e$ with the other two edges of the triangle. Then we have replaced $e'$ with two new edges that wind counterclockwise around $o$. Proceeding this way inductively, we can replace  all edges in the link that wind clockwise around $o$ with edges that wind counterclockwise. By compactness this process will terminate and the result will be a closed braid. Proofs of the Alexander theorem can be found in \cite{alexander1923lemma}.

\begin{figure}[htp]
    \centering
    \includegraphics[width=6cm]{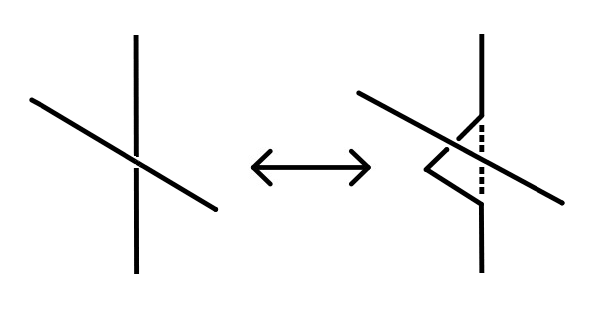}
    \caption{A $Δ$-move.}
    \label{fig:delta}
\end{figure}

Since every (oriented) link is obtained as the closure of a braid, the next natural question is how one can relate two braids, not necessarily with the same number of strands, so that their closures produce isotopic links.

\begin{theorem}[Markov]
\label{Markov}
Given two braids represented by elements $β_1,β_2$ in the braid groups $B_n,B_m$, their closures are isotopic links if and only if $β_2$ can be obtained from applying to $β_1$ a finite sequence of braid relations and the following operations:

\begin{enumerate}
    \item $\quad α \longleftrightarrow βαβ^{-1} \quad \textrm{for} \quad α,β \in B_n \quad \textrm{(conjugation)}$
    \item $\quad α \longleftrightarrow ασ_n^{\pm 1} \quad \textrm{for} \quad α \in B_n \quad \textrm{(M-move)}$
\end{enumerate}

\end{theorem}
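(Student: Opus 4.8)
The plan is to prove the two implications separately: the ``if'' direction by direct diagrammatic inspection, and the ``only if'' direction by reducing to the $L$-move theorem of Lambropoulou and Rourke \cite{lambropoulou1997markov}. For the ``if'' direction one checks that each permitted operation leaves the isotopy class of the closure unchanged. A braid relation produces, after closing up, the same link diagram up to Reidemeister II and III moves. A conjugation $\alpha\mapsto\beta\alpha\beta^{-1}$ corresponds in the closure $\widehat{\beta\alpha\beta^{-1}}$ to sliding the block $\beta$ around the closing arcs until it cancels with $\beta^{-1}$, which is a planar isotopy of $\widehat\alpha$ together with Reidemeister II moves. Finally, the $M$-move $\alpha\mapsto\alpha\sigma_n^{\pm1}$ introduces in the closure a single kink on the new strand bounding a small disc that meets the rest of the diagram in one arc; pulling this disc tight is a Reidemeister I move. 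Since isotopy of link diagrams is generated by Reidemeister moves and planar isotopy, this settles one direction.

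For the ``only if'' direction I would invoke the $L$-move theorem: the closures of $\beta_1\in B_n$ and $\beta_2\in B_m$ are isotopic if and only if $\beta_2$ is obtained from $\beta_1$ by a finite sequence of braid equivalences and $L$-moves, where an $L$-move cuts an arc of a diagram of the braid at an interior point and pulls one of the two resulting free ends entirely over (an $L_o$-move), or entirely under (an $L_u$-move), the remainder of the braid to the opposite end, creating one new strand. This theorem is proved by taking an ambient isotopy realising $\widehat{\beta_1}\simeq\widehat{\beta_2}$, putting it in general position with the braid axis fixed, subdividing it into $\Delta$-moves exactly as in the proof of Alexander's theorem above, and observing that a $\Delta$-move that never destroys the condition ``all edges wind counterclockwise about the axis'' is a braid equivalence, whereas one that momentarily violates it is repaired by an $L$-move, the new strand being the piece of the braid carried across the axis. (Alternatively this direction can be run directly, tracking the winding of each edge around the axis under the subdivided isotopy, without packaging it as the $L$-move theorem.)

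It then remains to show that the equivalence relation on braids generated by braid relations and $L$-moves coincides with the one generated by braid relations, conjugation and $M$-moves. On the one hand, an $M$-move is a special $L$-move: performing an $L_o$- (resp.\ $L_u$-) move whose cut point is taken at the far right just above the bottom endpoints introduces precisely a crossing $\sigma_n^{+1}$ (resp.\ $\sigma_n^{-1}$) on a new $(n{+}1)$st strand, i.e.\ it is the stabilisation $\alpha\mapsto\alpha\sigma_n^{\pm1}$. Conversely, a general $L$-move can be normalised to this form: using braid isotopy one first slides the cut point to the bottom of the braid, and then, using a planar isotopy of the closure --- which at the braid level is a conjugation --- one slides the foot of the new strand rightwards past the other strands until it sits at the far right, so that after absorbing the excess crossings via the Artin relations what remains is $\gamma\delta\gamma^{-1}\sigma_n^{\pm1}$ for suitable $\gamma,\delta$. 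Hence every $L$-move is a composition of a conjugation, an $M$-move and braid relations, and vice versa, which together with the first two parts proves the theorem. The main obstacle is this last normalisation step, where one must carefully track the arc created by the $L$-move as it is dragged through successive braid isotopies and conjugations, handling the $L_o$ and $L_u$ cases --- corresponding to the two signs of the stabilisation --- separately; the $L$-move theorem invoked above is itself the technical heart, its proof relying on the general-position and $\Delta$-move subdivision argument just sketched.
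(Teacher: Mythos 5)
The paper itself does not prove this statement: it is quoted as the classical Markov theorem with a pointer to the literature (\cite{kassel2008braid}), so there is no internal proof to compare against. Your route — ``if'' by checking that braid relations, conjugation and the $M$-move only cost Reidemeister II/III, a slide around the closure arcs, and a Reidemeister I respectively; ``only if'' by invoking the one-move ($L$-move) theorem and then decomposing each $L$-move into conjugations, an $M$-move and braid relations — is sound and is in fact the route the paper implicitly endorses when it remarks, after stating the one-move Markov theorem, that $L$-moves also generate conjugations ``so that the Markov theorem is indeed refined.'' There is no circularity, since Lambropoulou--Rourke prove the $L$-move theorem directly from Reidemeister equivalence of the closures, and your normalisation step is correct: it can be read off the paper's own algebraic expression for an $L_o$-move, which is the conjugate by $\sigma_n\cdots\sigma_{i+1}$ of $o_{n+1}(\alpha_1)\,u^{-1}\sigma_n^{\pm1}u\,o_{n+1}(\alpha_2)$ with $u=\sigma_{n-1}\cdots\sigma_i\in B_n$, and one further conjugation turns this into $\bigl(u\,\alpha_2\alpha_1\,u^{-1}\bigr)\sigma_n^{\pm1}$, i.e.\ a conjugate of $\alpha$ stabilised once. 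The only thin spot is your parenthetical sketch of the $L$-move theorem itself: the ``$\Delta$-move that momentarily violates the winding condition is repaired by an $L$-move'' picture is closer to the classical general-position proofs of Markov's theorem than to the actual Lambropoulou--Rourke argument (which braids diagrams by an explicit algorithm and analyses how Reidemeister moves interact with it), and it glosses over the genuine technical work (general position relative to the axis, subdivision, behaviour of the closure arcs). Since you use that theorem as a cited black box, exactly as the paper does, this does not damage the proof; what your approach buys, compared with simply citing a textbook proof, is that it runs through the same $L$-move machinery whose framed analogue the paper develops in Section 5.
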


\begin{figure}[htp]
    \centering
    \includegraphics[width=5.5cm]{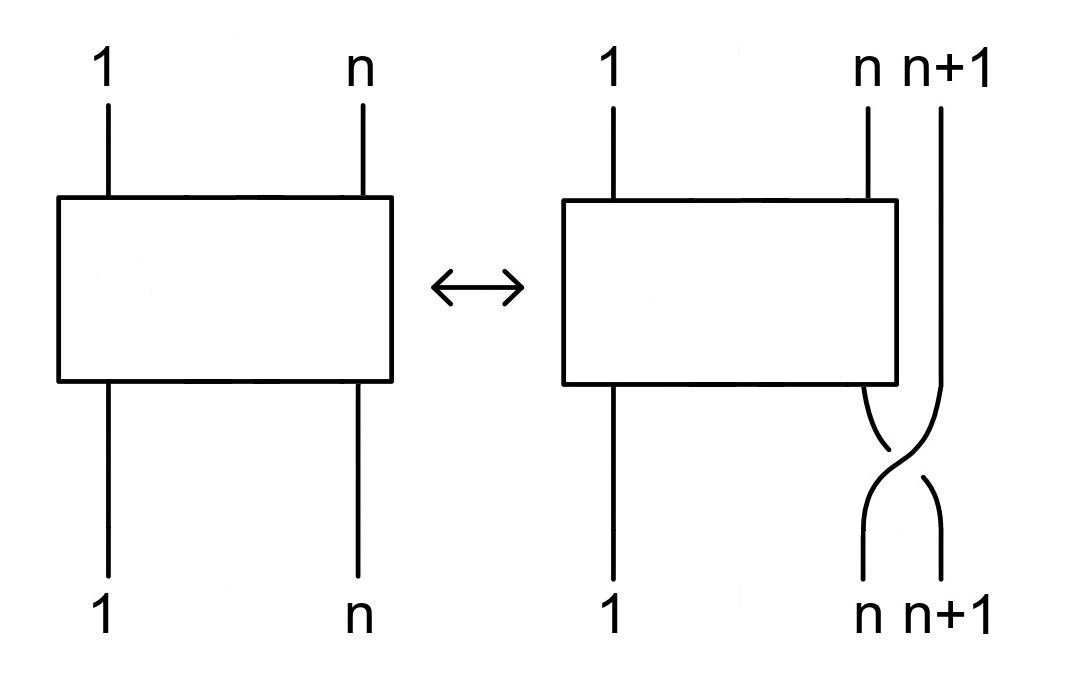}
    \caption{The abstraction of an $M$-move.}
    \label{fig:modRI}
\end{figure}

Proofs of the Markov theorem can be found in \cite{kassel2008braid}. In \cite{lambropoulou1997markov} Lambropoulou and Rourke gave an one-move Markov theorem on braids by introducing a type of move called the $L$-move on braid diagrams.

\begin{definition}[$L$-moves]
Let $D$ be a (piecewise linear) braid diagram and $P$ a point of an arc of $D$ such that $P$ is not vertically aligned with any of the crossings or (other) vertices of $D$ (note that $P$ itself may be a vertex). Then we can perform the following operation: Cut the arc at $P$, bend the two resulting smaller arcs apart slightly by a small isotopy and introduce two new vertical arcs to new top and bottom end-points in the same vertical line as $P$. The new arcs are both oriented downwards and they run either both under or both over all other arcs of the diagram. Thus there are two types of $L$–moves, an under $L$–move or $L_u$–move and an over $L$–move or $L_o$–move. This process is illustrated in Fig.~\ref{fig:L-move}.
\end{definition}

\begin{figure}[htp]
    \centering
    \includegraphics[width=9cm]{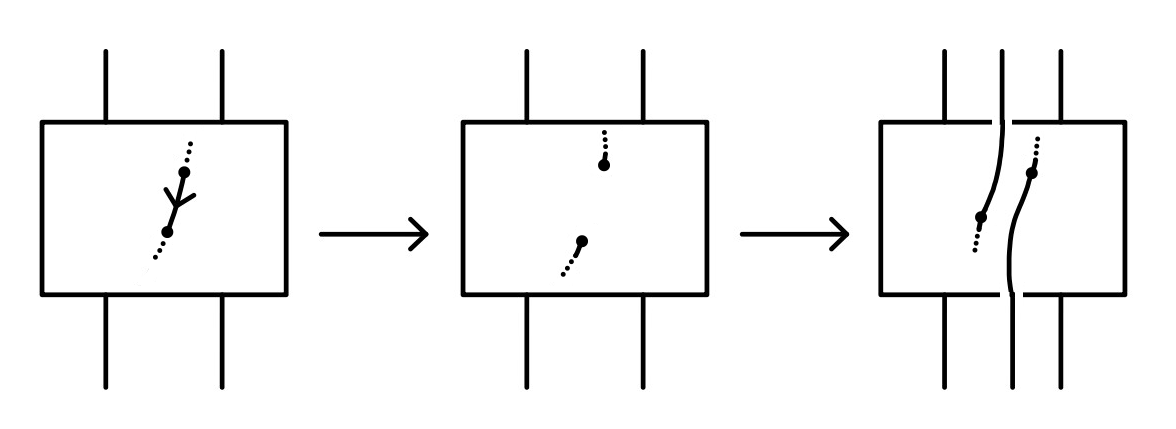}
    \caption{The abstraction of an $L$-over move.}
    \label{fig:L-move}
\end{figure} 

For an equivalent definition of an $L$-move we shall introduce a family of inclusion morphisms, namely $(o_i)_{i=1}^{n+1}$ and $(u_i)_{i=1}^{n+1}$.\\
Here $o_i:B_n \to B_{n+1}$ is a monorphism that acts on $B_n$ by inserting a new strand between the braid inputs/outputs $i-1$ and $i$ of the braid that goes entirely over the braid, see Fig. \ref{fig:morphismoi}\\
\indent We define $u_i$ respectively with the main difference being that the new strand will go entirely under the braid. Notice also that $u_1=o_1$ and that $u_{n+1}=o_{n+1}$ is the natural braid inclusion of $B_n$ into $B_{n+1}$.

\begin{figure}[htp]
    \centering
    \includegraphics[width=8cm]{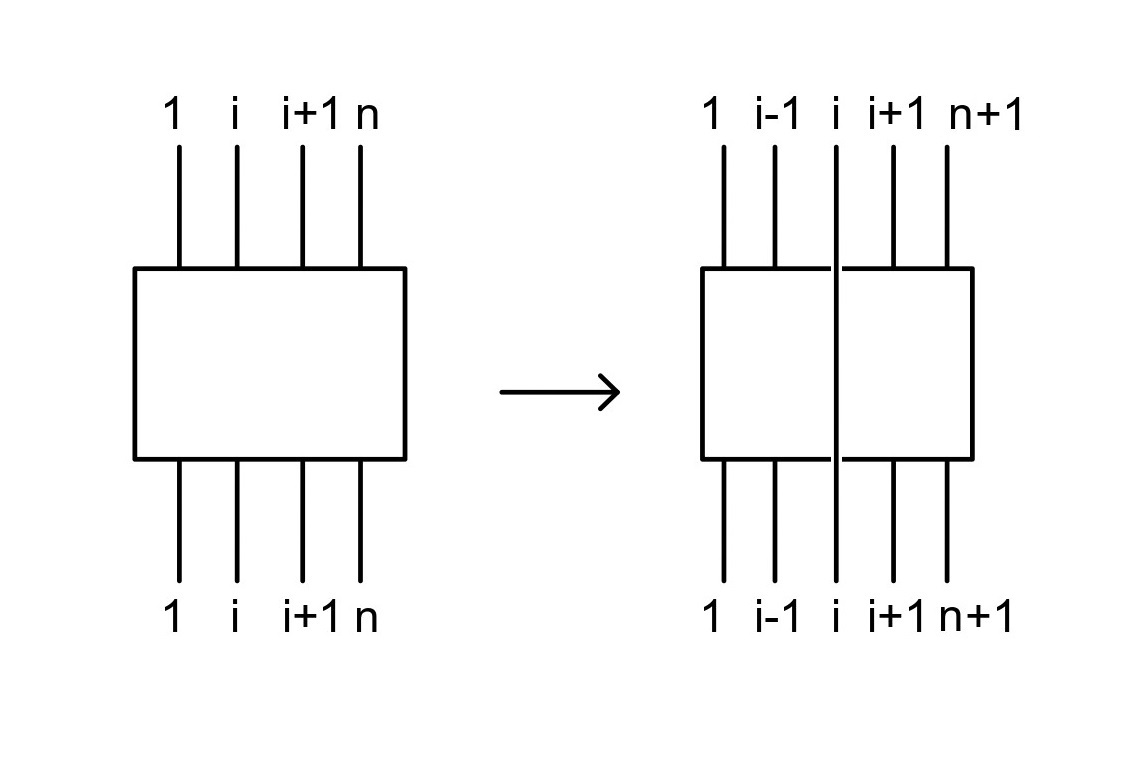}
    \caption{The inclusion morphism $o_i$.}
    \label{fig:morphismoi}
\end{figure}

\begin{definition}[$L$-moves revised]
Let $α \in B_n$ be a braid. We can always assume that the line segment we want to cut is perpendicular to the $x$-axis and that it lies between two braids $α_1,α_2 \in B_n$ so that $α=α_1α_2$. Then an $L_o$-move (standing for $L$-over) will be either one of the following moves given by:

$$o_{i+1}(α_1)σ_i^{\pm 1}o_{i+1}(α_2) \quad \textrm{or} \quad o_{i}(α_1)σ_{i}^{\pm 1}o_i(α_2)$$

\noindent $L_u$ moves are defined similarly by using the family of morphisms $(u_i)$. For a visual presentation of the $L_o$-move see Fig.~\ref{fig:Lo}.\\
\indent We can of course drag the crossing entirely to the right or the left of the picture so that one can get the algebraic expression for the $L_o$-move and $L_u$-move respectively.

\begin{equation}
    α=α_1α_2 \to σ_{i+1}^{-1}\ldots σ_{n}^{-1}o_{n+1}(α_1)σ_{i}^{-1}\ldots σ_{n-1}^{-1}σ_{n}^{\pm 1}σ_{n-1}\ldots σ_{i}o_{n+1}(α_2)σ_n\ldots σ_{i+1}
\end{equation}
\begin{equation}
    α=α_1α_2 \to σ_{i+1}\ldots σ_{n}o_{n+1}(α_1)σ_{i}\ldots σ_{n-1}σ_{n}^{\pm 1}σ_{n-1}^{-1}\ldots σ_{i}^{-1}o_{n+1}(α_2)σ_n^{-1}\ldots σ_{i+1}^{-1}
\end{equation}
\end{definition} 

\begin{figure}[htp]
    \centering
    \includegraphics[width=6cm]{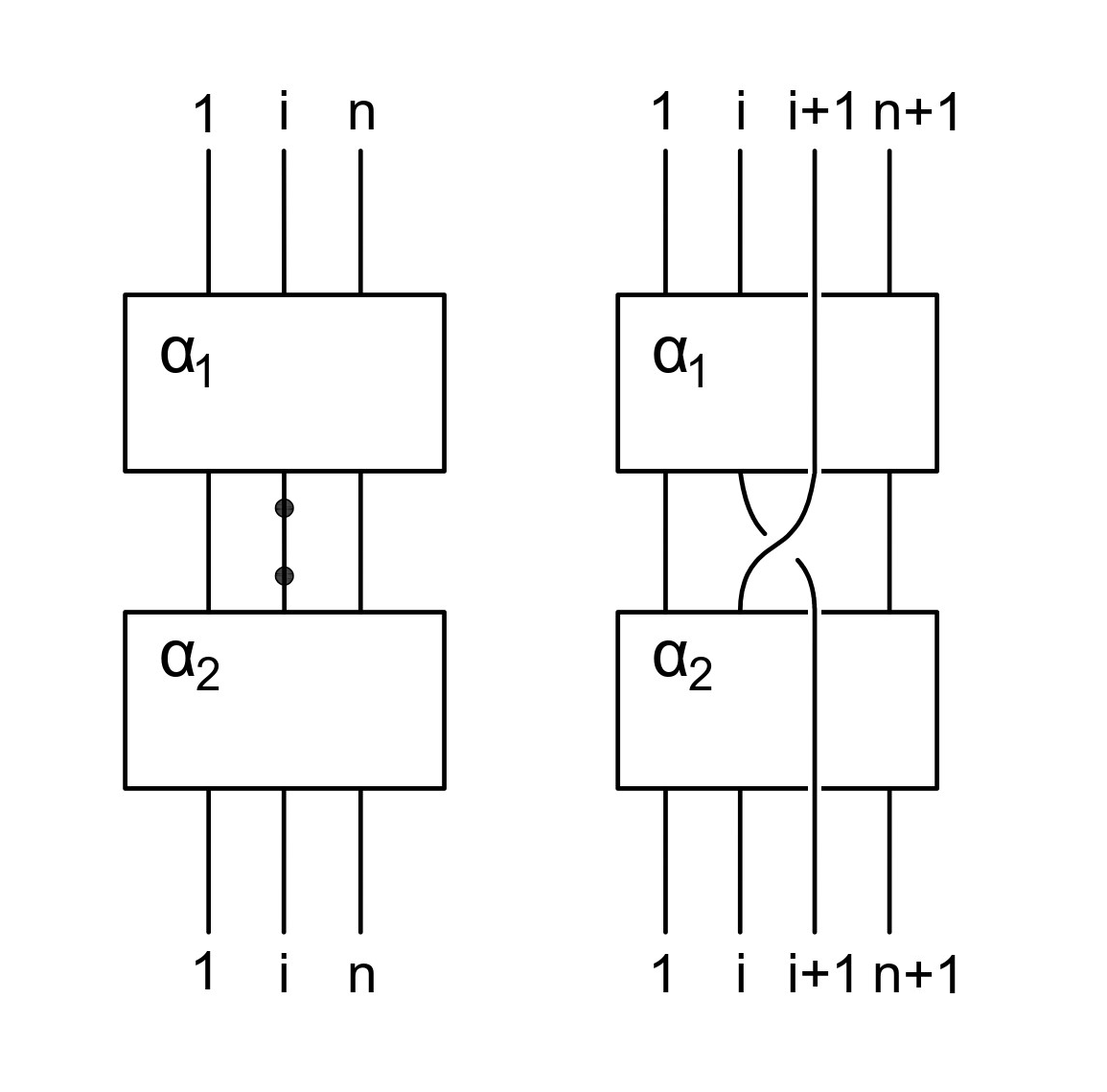}
    \caption{An $L_o$-move introducing a crossing.}
    \label{fig:Lo}
\end{figure}

It is easy to see that the two definitions of an $L$-move are equivalent. Note also that an $M$-move is a special case of an $L$-move. Observe that the closure of two braid diagrams that differ by an $L$-move, represent isotopic links in $S^3$. The $L$-moves induce an equivalence relation on braids so that two braids $b_1$ and $b_2$ are said to be $L$-equivalent if and only if we can transform a braid diagram representative of $b_1$ to a braid diagram representative of $b_2$ by a finite sequence of braid equivalence moves and $L$-moves.  The following theorem is due to Lambropoulou and Rourke \cite{lambropoulou1997markov}:

\begin{theorem}[One-move Markov]
The closure map induces a bijection between the set of $L$–equivalence classes of braids and the set of isotopy types of (oriented) link diagrams.
\end{theorem}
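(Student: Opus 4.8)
The plan is to establish the theorem in two halves: first, that the closure map is well-defined on $L$-equivalence classes and lands in isotopy types of link diagrams (i.e., $L$-equivalent braids have isotopic closures — this is the "easy" direction, already remarked above to follow from inspection of the $L$-move); second, that this induced map is both injective and surjective. Surjectivity is immediate from the Alexander theorem: every oriented link diagram is isotopic to the closure of some braid, so every isotopy type is hit. The substance is therefore \emph{injectivity}: if $\hat{D_1}$ and $\hat{D_2}$ are isotopic link diagrams, then the braids $b_1, b_2$ they close up from must be $L$-equivalent. Equivalently, one must show that the $L$-moves (together with braid equivalence) generate all of the relations needed, i.e., they suffice to absorb an arbitrary Reidemeister-move-and-planar-isotopy sequence relating $\hat{D_1}$ to $\hat{D_2}$.

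First I would set up the framework of \emph{braided link diagrams}: arrange the isotopy from $\hat{D_1}$ to $\hat{D_2}$ as a finite sequence of elementary moves (Reidemeister I, II, III, planar isotopy, and for piecewise-linear diagrams, $\Delta$-moves), and at each stage keep the diagram in "almost-braided" form — winding monotonically around the reference point $o$ except on a controlled collection of arcs. The key technical step is a \emph{local-to-global} argument: each elementary move takes place in a small disk, so it is enough to check that performing it changes the ambient braid only by braid equivalence and $L$-moves. For moves that happen away from the "closure arcs" and that keep monotonicity, this is just braid equivalence. The interesting cases are (a) a move that temporarily destroys monotonicity of an arc, and (b) Reidemeister I, which changes the writhe. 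For (a), one re-braids the offending arc exactly as in the Alexander algorithm (a $\Delta$-move pushing the arc back across $o$), and the claim is that the cost of this re-braiding is precisely an $L$-move — this is the heart of the matter and is where the definition of the $L$-move (cutting an arc at a point $P$ and running two new vertical strands, over or under everything, to fresh top/bottom endpoints) was engineered to match. For (b), a positive or negative curl on an arc, after braiding, becomes exactly an $M$-move $\alpha \leftrightarrow \alpha\sigma_n^{\pm 1}$, which the excerpt already notes is a special case of an $L$-move. Conjugation in the Markov theorem likewise arises from sliding a strand around the closure, and this too is realized by an $L_o$- or $L_u$-move followed by braid equivalence (drag the new crossing around, as in equations (1)–(2)).

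The cleanest route is probably to \emph{not} reprove everything from scratch but to \emph{reduce to the classical Markov theorem}: by Markov, isotopic closures mean $b_1$ and $b_2$ are related by conjugation and $M$-moves; then it suffices to show each of these two operations is realized by $L$-moves and braid equivalence. The $M$-move is literally an $L$-move. For conjugation $\alpha \leftrightarrow \beta\alpha\beta^{-1}$, one first reduces to the case $\beta = \sigma_i^{\pm 1}$ (a generator), and then exhibits $\sigma_i\alpha\sigma_i^{-1}$ as the result of an $L_o$-move immediately followed by an inverse $L_u$-move (or an isotopy undoing a second $L$-move) — geometrically, pull a strand over the top, around, and back under, which is two $L$-moves whose composite effect on the closure is trivial but whose effect on the open braid is conjugation. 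Writing this composition out using the inclusion morphisms $o_i, u_i$ and the algebraic formulas (1)–(2) makes it a finite check.

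The main obstacle I anticipate is \textbf{case (a)}: showing that an arbitrary braiding step — the $\Delta$-move that pushes a clockwise-winding arc back across the reference point — costs exactly an $L$-move and nothing more, \emph{uniformly} and without accumulating extra crossings that cannot be cleared by braid equivalence. One must be careful that the arc being re-braided may itself pass over and under other strands in a complicated way, so the bookkeeping of which strands the two new vertical arcs must run over versus under (forcing the $L_o$ vs.\ $L_u$ dichotomy) requires a clean invariant, e.g.\ tracking the arc's position in the "layer ordering" near $P$. Equivalently, if one takes the reduction-to-Markov route, the obstacle shifts to verifying the conjugation identity $\sigma_i \alpha \sigma_i^{-1} \sim_L \alpha$ in full generality for all $i$ and all $\alpha$, and in particular checking that the two successive $L$-moves really do compose to the identity on closures — a planar-isotopy argument on $\hat{D}$ that must be done with care near the closure arcs. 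Everything else (well-definedness, surjectivity, the $M$-move case) is routine.
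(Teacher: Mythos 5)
The paper does not actually prove this theorem: it quotes it from Lambropoulou--Rourke \cite{lambropoulou1997markov}, so your attempt should be measured against that original argument and against how the present paper uses the result. Your ``cleanest'' route is genuinely different from the original: you take the classical Markov theorem as given and only check that its two moves are absorbed by $L$-equivalence --- the $M$-move is literally an $L$-move, and conjugation, after reducing to conjugation by a single generator $\sigma_i^{\pm 1}$, is realized by performing one $L$-move and undoing another. That is a correct and much shorter derivation of the stated bijection (well-definedness and surjectivity via Alexander are handled correctly), and the lemma it hinges on --- that $L$-moves together with braid isotopy generate conjugation --- is precisely the fact this paper attributes to \cite{lambropoulou1997markov}; the price is that the one-move theorem then depends on Markov, whereas the whole point of Lambropoulou--Rourke (mirrored in this paper, whose framed analogue is deliberately proved ``without assuming'' the framed Markov theorem) is to prove the one-move statement directly and obtain Markov as a corollary. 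Your first route --- keeping the diagram braided through the Reidemeister/$\Delta$-move sequence and checking that every local move and every choice made in the braiding costs only braid isotopy and $L$-moves --- is essentially the original proof, and, as you yourself anticipate, your case (a) (showing uniformly that re-braiding an arc across the reference point is exactly one $L$-move, with the layer ordering near the cut point dictating $L_o$ versus $L_u$) is where the real work of \cite{lambropoulou1997markov} lies; your sketch names the right bookkeeping but does not carry it out, so as written it is an outline of that proof rather than a proof. Either route, completed, establishes the statement; the reduction route buys brevity at the cost of logical independence, the direct route buys a self-contained proof at the cost of the lengthy braiding analysis.
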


\noindent In \cite{lambropoulou1997markov}, the authors also proved that these types of moves also generate conjugations, so that the Markov theorem is indeed refined.


\section{  Framed links and framed braids via the standard closure}
\label{sec:framed_closure}

Connecting the endpoint intervals of a framed braid diagram on $n$ ribbons with $n$ untwisted and unlinked ribbons results in a framed link. This operation is called the \textit{standard closure of a framed braid}. In particular, the closure of a framed braid is a framed knot if and only if the permutation induced by the framed braid $\in RB_n $  is an  $n$-cycle in the permutation group $S_n$.

\begin{definition}
The \textit{framing of a link component of a closed framed braid} is the sum of the framings of the ribbons that make up this component, plus the writhe of their core curves.  In the case where the closed framed braid framing is a knot, its framing is equal to the sum of all the exponents of the generators $σ_i$ and $t_j$ in the algebraic presentation of the framed braid.    
\end{definition}

\begin{figure}[htp]
    \centering
    \includegraphics[width=6cm]{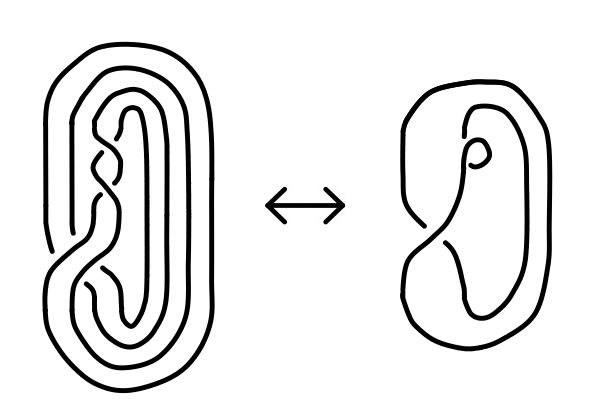}
    \caption{Standard closure of a framed braid.}
    \label{fig:framed ribbons}
\end{figure} 

\subsection{The Alexander theorem for framed links}

The Alexander theorem can be applied to oriented framed links as well, in the same way that it is applied to classical oriented links, so that every oriented framed link is isotopic to the closure of a framed braid. Indeed, observe that any ribbon twist is contained in a small cylindrical neighborhood of the core curve of the ribbon. This means that we can also present curls on the strands as small dots that can travel along the core curves (see Fig. \ref{fig:dots}). This way, we can apply any braiding algorithm to the dotted link as if we had a classical oriented link with the obvious exception that has to be careful not to braid the link on a dot/curl. The resulting framed braid might be different if we move the dots along the link, but this does not affect the result in a harmful way. Indeed, notice that by using conjugations, one can move the framing generators in the braid form of a framed link between strands belonging to the same link component (e.g. $t_1σ_1$ and $σ_1^{-1}t_1σ_1^{2}$ represent the same framed link when closed, however notice that $σ_1^{-1}t_1σ_1^{2}=t_2σ_1^{-1}σ_1^{2}=t_2σ_1$ due to the fourth family of relations in the group presentation). From the discussion above, we have:
\begin{theorem}[Framed Alexander]
Any oriented framed link in $\mathbb{R}^3$ is isotopic to a closed framed braid.    
\end{theorem}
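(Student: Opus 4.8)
The plan is to deduce the statement from the classical Alexander theorem by treating the framing as local decoration that is carried along the link. First I would fix an oriented framed link $L$ and a diagram $D$ of it in blackboard framing, so that the framing of each component is realized by positive and negative curls on its arcs. Since each such curl --- equivalently, each full twist of the associated ribbon --- is supported in an arbitrarily thin cylindrical neighbourhood of the core curve, I can isotope $D$ so that every curl is contracted to a marked point, a \emph{dot}, lying on an arc of the underlying classical link diagram $\underline{D}$ and carrying a sign recording the sense of the twist (cf.\ Fig.~\ref{fig:dots}). With this convention the framing of a component of $L$ is the algebraic sum of the signs of the dots it carries together with the writhe of its core, matching the definition of the framing of a closed framed braid.

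Next I would apply Alexander's braiding algorithm of Section~\ref{sec:stand_closure} to the classical diagram $\underline{D}$, modified only so that no $\Delta$-move is ever performed across a dot. This is always possible: there are finitely many dots, and whenever an edge $e$ winds the wrong way around the chosen reference point $o$, one first slides along the component each dot lying on $e$, out of the region where the $\Delta$-move acts, and only then replaces $e$ by the other two sides of its triangle. Sliding a dot along an arc is a framed isotopy, and a dot can always be pushed clear of any bounded region. By compactness the process terminates, yielding a classical braid $\beta_0$ on some number $n$ of strands together with a finite family of signed dots on its strands, whose closure, with dots, is isotopic to $L$ as a framed link.

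It remains to put the dotted braid into standard form. Using framed braid isotopy one collects the dots on each strand to its top, replacing them by powers of the generators $t_i$, and obtains a framed braid $t_1^{\lambda_1}\cdots t_n^{\lambda_n}\beta_0' \in RB_n$ whose standard closure is $L$. The framing bookkeeping is then automatic: moving dots within one link component along the braid, in particular pushing a dot through a crossing, is exactly the content of the fourth family of relations $\sigma_i t_j = t_{s_i(j)}\sigma_i$ (the mechanism behind $\sigma_1^{-1}t_1\sigma_1^{2}=t_2\sigma_1$), so the framing of each closed component is preserved and equals the framing of the corresponding component of $L$.

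The main obstacle --- in fact the only delicate point --- is the interaction of the braiding algorithm with the dots: one must be certain that localizing the twists and dodging them during the $\Delta$-moves does not alter the framed isotopy type and that the algorithm still terminates. Both are immediate once one observes that each twist lives in a thin cylinder about its core, so contracting it to a dot is a framed isotopy, and that a dot can always be slid free of the finitely many regions where the algorithm operates; beyond this, the argument is the classical Alexander proof recalled above applied verbatim.
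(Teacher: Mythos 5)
Your proposal is correct and follows essentially the same route as the paper: replace the ribbon twists by signed dots confined to thin cylindrical neighbourhoods of the core curves, run the classical Alexander braiding algorithm while keeping the $\Delta$-moves clear of the dots, and then collect the dots into powers of the $t_i$, with the relations $\sigma_i t_j = t_{s_i(j)}\sigma_i$ (equivalently, conjugation) guaranteeing that where exactly the dots end up on a component is immaterial. Your write-up merely makes explicit the dot-dodging step that the paper states informally.
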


\begin{figure}[htp]
    \centering
    \includegraphics[width=3cm]{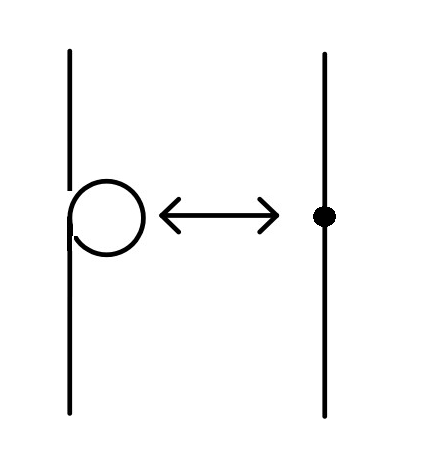}
    \caption{Replacing curls with dots.}
    \label{fig:dots}
\end{figure} 

\subsection{The Markov theorem for framed braids}

There is a direct analogue to the Markov theorem for framed braids.

In order to modify the classical Markov theorem for framed braids we need to account for any change of framing at any point in the sequence when we modify the braid. Conjugation does not affect the framing of a braid, but an $M$-move changes it by $\pm 1$. So the $M$-move (recall Theorem \ref{Markov}) will have to be adapted to the followinf the \textit{RM-move}:

$$(RM) \quad α \longleftrightarrow αt_n^{\mp 1}σ_n^{\pm 1} \longleftrightarrow ασ_n^{\pm 1}t_{n+1}^{\mp 1} \quad \textrm{for} \quad α \in RB_n$$

\begin{theorem}[Framed Markov]
\label{Framed Markov}
Given two framed braids $b_1,b_2$ in the braid groups $RB_n,RB_m$ represented by the braid diagrams $d_1,d_2$ respectively, their closures are isotopic framed links if and only if $d_2$ can be obtained from applying to $d_1$ a finite sequence of braid relations, conjugations and $RM$-moves. 
\end{theorem}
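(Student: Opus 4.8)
The plan is to deduce the Framed Markov theorem from the classical Markov theorem (Theorem~\ref{Markov}) together with the framed Alexander theorem, by a ``bookkeeping'' argument that tracks the framing vector separately from the underlying classical braid. First I would observe that by the fourth family of relations in the presentation of $RB_n$ any framed braid can be written canonically as $b = t_1^{\lambda_1}\cdots t_n^{\lambda_n}\beta$ with $\beta \in B_n$, so that closing $b$ produces the closed classical braid $\widehat{\beta}$ equipped with a framing on each component equal to the sum of the $\lambda_i$ over strands in that component plus the writhe contribution of $\beta$ restricted to that component. Two closed framed braids are isotopic as framed links precisely when (i) their underlying closed classical braids are isotopic as links, and (ii) under that isotopy the induced framings on corresponding components agree. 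This reduces the theorem to two tasks: showing the moves in the statement realize every classical-link isotopy on the underlying braids, and showing they suffice to adjust the per-component framings freely subject to (ii).

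For task (i), I would invoke the classical Markov theorem: the underlying braids $\beta_1 \in B_n$ and $\beta_2 \in B_m$ have isotopic closures iff they are connected by braid relations, conjugations, and $M$-moves $\alpha \leftrightarrow \alpha\sigma_n^{\pm1}$. Each such classical move must be lifted to a framed move that does not disturb (ii). A classical conjugation $\alpha \leftrightarrow \beta\alpha\beta^{-1}$ lifts verbatim to a framed conjugation; using the commutation relations $\sigma_i t_j = t_{s_i(j)}\sigma_i$ one checks the framing generators simply get permuted among strands of the same component, so the per-component framing is unchanged — this is exactly the remark already made in the discussion preceding the Framed Alexander theorem. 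A classical $M$-move $\alpha \leftrightarrow \alpha\sigma_n^{\pm1}$ changes the writhe of the underlying braid by $\pm1$; the $RM$-move $\alpha \leftrightarrow \alpha t_n^{\mp1}\sigma_n^{\pm1}$ is designed to compensate for exactly this by an offsetting $\mp1$ in the framing, so it preserves the framing of every component of the closure (the new strand is absorbed into the component it is Markov-stabilized onto). The two presentations $\alpha t_n^{\mp1}\sigma_n^{\pm1} \leftrightarrow \alpha\sigma_n^{\pm1}t_{n+1}^{\mp1}$ are interchangeable via relations, as noted.

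For task (ii), I would show that $RM$-moves plus conjugation generate all framing changes that are forced to be trivial on links but that we need access to: specifically, stabilizing and then destabilizing on the same component via an $RM$-move and its inverse, composed with a conjugation that slides the resulting $t$-generator around the component, produces the identity on links while allowing free transfer of framing between strands of a component — this guarantees that whenever the underlying braids are Markov-equivalent with matching component framings, the full framed braids are $RM$-equivalent. Conversely, the ``only if'' direction is the easy half: each listed move manifestly preserves the isotopy class of the closed framed link, by inspecting the local pictures (a framed conjugation is a framed isotopy of the closure, an $RM$-move is a framed isotopy since the inserted $t_n^{\mp1}\sigma_n^{\pm1}$ closes up to a framed-trivial curl that cancels — this is precisely the move of Fig.~\ref{fig:cancelling}). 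The main obstacle I anticipate is task~(ii): making rigorous the claim that component-wise equality of framings, together with classical Markov equivalence of the underlying braids, can always be upgraded to an explicit $RM$-and-conjugation sequence. The delicate point is that a classical Markov sequence may pass through intermediate braids where the strand-to-component assignment changes (stabilizations merge what were separate strands), so one must argue that the framing can be ``carried along'' coherently through the whole sequence; I would handle this by always keeping the framing in the canonical form $t_1^{\lambda_1}\cdots t_n^{\lambda_n}\beta$ and checking that each elementary classical move has a canonical framed lift, then composing.
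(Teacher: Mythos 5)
Your strategy has no counterpart in the paper for this particular statement: the paper does not prove Theorem~\ref{Framed Markov} at all, but cites \cite{ko1992framed} for an integer-framing variant and only discusses how blackboard and integer framing correspond. What you propose is essentially the strategy the paper uses for the analogous Theorem~\ref{framed L-Markov}: project by the framing-forgetting map $\pi$, apply the classical theorem to $\pi(b_1),\pi(b_2)$, lift each classical move to a framed move ($M$-move $\mapsto$ $RM$-move), and then repair the remaining discrepancy between framing vectors over the common underlying braid. One simplification you are missing: conjugation by arbitrary elements of $RB_n$, in particular by the $t_i$, is already one of the allowed moves of this theorem, so your stabilize--destabilize--slide mechanism in task (ii) is unnecessary; conjugating $t_1^{\delta_1}\cdots t_n^{\delta_n}\beta$ by framing generators already redistributes framing along each cycle of the permutation of $\beta$ (this is exactly the linear system solved in the paper's proof of Theorem~\ref{framed L-Markov}, where the detour through $RL$-moves is needed only because conjugation is not a primitive move in that statement).

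The genuine gap is in the endgame of task (ii). After lifting the classical Markov chain you reach $b_2'=t_1^{\delta_1}\cdots t_m^{\delta_m}\beta_2$, and your mechanism (framing transfer within a component) can turn this into $b_2=t_1^{\kappa_1}\cdots t_m^{\kappa_m}\beta_2$ only if the exponent sums agree component by component \emph{under the identity correspondence} of components of $\widehat{\beta_2}$. But the hypothesis only gives agreement under the correspondence induced by some framed isotopy, which may permute interchangeable components. Concretely, take $b_2'=t_1$ and $b_2=t_2$ in $RB_2$ over the trivial braid: their closures are isotopic framed links (two unknots, framings $1$ and $0$), yet no amount of framing transfer within components, nor conjugation by framing generators, turns the vector $(1,0)$ into $(0,1)$; what works is the conjugation $\sigma_1^{-1}t_1\sigma_1=t_2$, i.e.\ a move realizing the permutation of components. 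So to complete the argument you must either strengthen the classical input to a labelled (colour-preserving) version of Markov's theorem, so that the induced correspondence is the identity, or add an argument that any permutation of components coming from the framed isotopy can be realized by conjugations and $RM$-moves returning to the same underlying braid; alternatively one avoids the issue entirely by redoing a proof of Markov's theorem with the framing carried along, as in \cite{ko1992framed}. (The paper's proof of Theorem~\ref{framed L-Markov} records only the total equality $\sum_i\delta_i=\sum_i\kappa_i$ and is silent on the same point, so this is a place to be more careful than the source, not less.) A minor slip: at the end you label the easy ``moves preserve the closure'' half as the ``only if'' direction, when it is the ``if'' half of the statement.
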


A proof of a variant of Theorem~\ref{Framed Markov} is given in \cite{ko1992framed}. Smolinsky and Ko's proof uses integer framing instead of blackboard framing, whereas the framing of a braid strand is indicated by an integer (see Fig.~\ref{fig:FramedBraid}).  Recall that a framed oriented knot $K$ with framing $m$ is essentially a knotted solid torus $V \cong S^1 \times D^2$ embedded in $S^3$, where the core curve $S^1 \times \{0 \}$ is our oriented knot $K$, along with a simple closed curve $K_0 \subset \partial V$ where $K_0$ is homologous to the element $m \in H_1(S^3 \setminus K)$. Recall also that framed knot ambient isotopy is the ambient isotopy of $(V,K_0)$ inside $S^3$. The difference between blackboard and integer framing is essentially a matter of representation, as a blackboard framed diagram depicts the parallel curve in $\partial V$ whereas an integer framed diagram depicts the core curve. 
 In that version of Theorem~\ref{Framed Markov} the $RM$-move is replaced by the classical $M$-move, with zero framing on the new strand, since in that setting the extra crossing does not change the framing of the strand involved.

\subsection{Ribbon or framed $L$-moves}

In this section we shall introduce the framed $L$-moves. Similarly to the $M$-moves, the classical $L$-moves change the framing when performed on a ribbon braid diagram. So, a similar modification as in the $Μ$-move case must be implemented. Namely we have:

\begin{definition}[Ribbon $L$-moves or $RL$-moves]
An {\it $RL$-move} is the framed analogue of an $L$-move. More precisely let $a,a_1,a_2 \in RB_n$ be ribbon braid diagrams so that $a=a_1a_2$. Then an $RL_{o}$-move is one of the following moves:

$$a=a_1a_2 \quad \longleftrightarrow \quad o_{i+1}(a_1)t_i^{\mp 1}σ_{i}^{\pm 1}o_{i+1}(a_2) \quad \textrm{or} \quad o_{i}(a_1)t_{i+1}^{\mp 1}σ_{i}^{\pm 1}o_i(a_2) $$
where $o_i$ is the morphism that introduces a new strand that passes over the (framed) braid between the $i$ and $i+1$ ends. View Fig.~\ref{fig:RLo}. Analogously, an $RL_{u}$ move is described as one of the following moves:
$$a=a_1a_2 \quad \longleftrightarrow \quad u_{i+1}(a_1)t_i^{\mp 1}σ_{i}^{\pm 1}u_{i+1}(a_2) \quad \textrm{or} \quad u_{i}(a_1)t_{i+1}^{\mp 1}σ_{i}^{\pm 1}u_i(a_2) $$

\begin{figure}[htp]
    \centering
    \includegraphics[width=5.5cm]{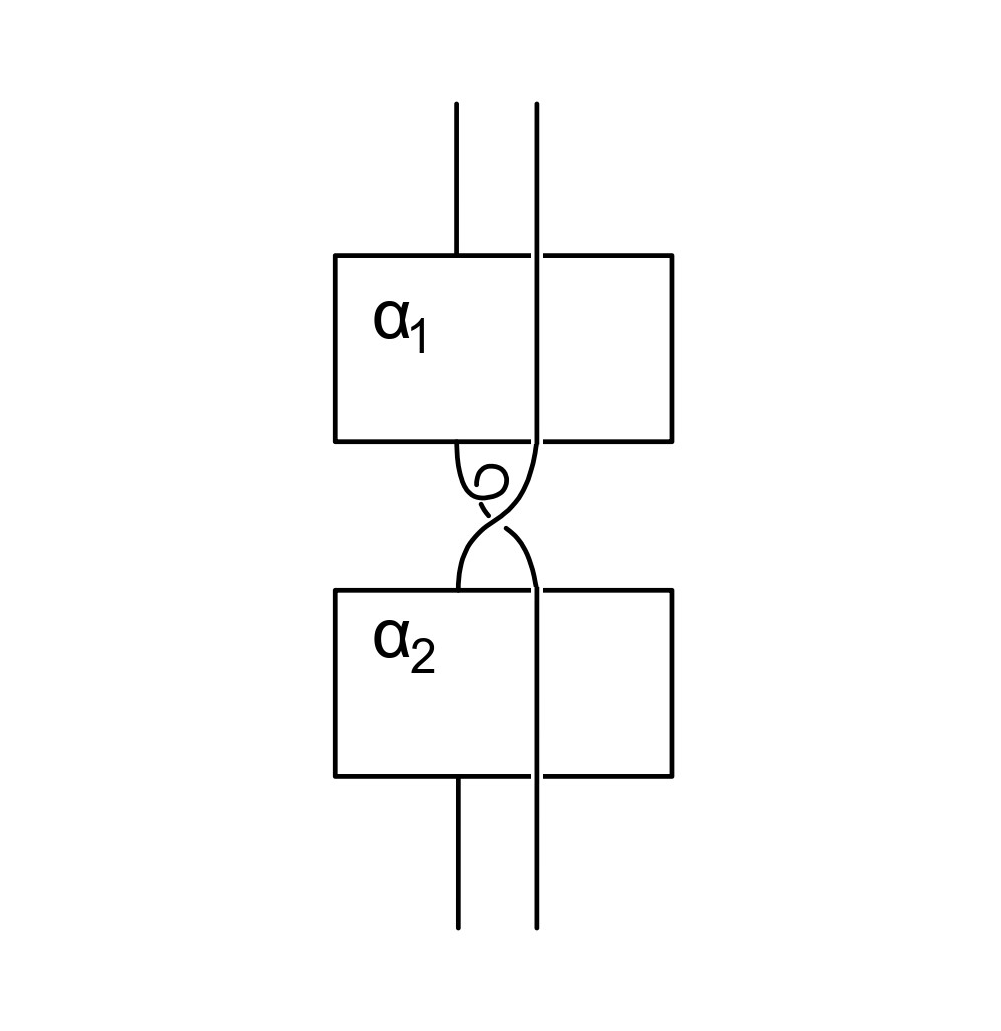}
    \caption{An $RL_o$-move}
    \label{fig:RLo}
\end{figure} 

The reason for adding a framing generator to an $RL$-move is because a standard $L$-move introduces a new crossing and therefore changes the writhe, and subsequently the blackboard framing, of the framed component when applying the standard closure. With an $RL$-move the change in framing that the new crossing generator introduces is neutralized by the addition of the framed generator with opposite sign.

Moving the crossing and curl parts that an $RL$-move introduces to the right (or to the left) of the diagram, via a framed braid isotopy we obtain the following {\it algebraic expressions for the $RL$-moves}:

\begin{equation}
    α=α_1α_2 \to σ_{i+1}^{-1}\ldots σ_{n}^{-1}o_{n+1}(a_1)σ_{i}^{-1}\ldots σ_{n-1}^{-1}t_i^{\mp 1}σ_{n}^{\pm 1}σ_{n-1}\ldots σ_{i}o_{n+1}(a_2)σ_n\ldots σ_{i+1}
\end{equation}
\begin{equation}
    α=α_1α_2 \to σ_{i+1}\ldots σ_{n}o_{n+1}(a_1)σ_{i}\ldots σ_{n-1}t_i^{\mp 1}σ_{n}^{\pm 1}σ_{n-1}^{-1}\ldots σ_{i}^{-1} o_{n+1}(a_2)σ_n^{-1}\ldots σ_{i+1}^{-1}
\end{equation}
\end{definition}

\subsection{Framed $L$-equivalence}

We now state the following result, which we will prove without assuming Theorem~\ref{Framed Markov}:

\begin{theorem}[Framed $L$-equivalence]
\label{framed L-Markov}
The closure of two framed braids $b_1,b_2$ (not necessarily with the same number of strands) induces isotopic framed links if and only if $b_1$ and $b_2$ are related by framed braid isotopies and  a series of $RL$-moves.
\end{theorem}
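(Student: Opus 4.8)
The plan is to follow the classical proof strategy of Lambropoulou and Rourke \cite{lambropoulou1997markov}, adapting it to the framed setting. The statement is an ``if and only if,'' so there are two directions. The easy direction is to check that if $b_1$ and $b_2$ differ by a framed braid isotopy or an $RL$-move, then their closures are isotopic framed links. Framed braid isotopy obviously induces framed link isotopy on closures. For an $RL$-move, I would argue diagrammatically: a plain $L$-move on the underlying diagram is realized on the closure by cutting an arc, pulling two strands to the top/bottom, and closing up --- this is a framed-link isotopy in $S^3$ except that it introduces one extra crossing and hence changes the blackboard framing by $\pm 1$; the extra $t_i^{\mp 1}$ factor in the $RL$-move exactly compensates this framing change, as explained in the paragraph following the definition of $RL$-moves. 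So closures are isotopic as framed links.

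The substance is the converse: if $\widehat{b_1}$ and $\widehat{b_2}$ are isotopic framed links, then $b_1$ and $b_2$ are $RL$-equivalent. First I would fix braid diagram representatives and realize the framed-link isotopy as a finite sequence of elementary moves: modified Reidemeister I (the version in Fig.~\ref{fig:modRI} that preserves framing, equivalently a canceling pair of curls as in Fig.~\ref{fig:cancelling}), Reidemeister II, Reidemeister III, planar isotopies, and $\Delta$-moves on a piecewise-linear representative. Then I would run Alexander's braiding algorithm (the Framed Alexander theorem, keeping track of framing via dots as in Fig.~\ref{fig:dots}) on each intermediate diagram to turn it into a closed framed braid, and show that each elementary move, after braiding, corresponds on the braid level to a composition of framed braid isotopies and $RL$-moves. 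The key local claims to establish are: (i) a ``threading'' / subdivision move --- pulling a piece of an arc around through the braid axis region --- is an $RL$-move by definition (this is essentially the content of the revised definition of $L$-moves: cutting at $P$ and introducing two vertical strands over or under); (ii) the ``$L$-moves generate conjugation'' phenomenon carries over verbatim, because conjugation does not change framing and the generators $t_j$ commute past $\sigma_i$ with an index shift (fourth relation of $RB_n$), so a framed conjugation is obtained from a pair of canceling $RL$-moves exactly as in the classical case; (iii) braiding a Reidemeister/$\Delta$-move near an arc only changes the braid by framed braid isotopy together with such threading moves, using that the extra curls/dots travel freely along core curves and can be collected and canceled.

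The main obstacle I expect is bookkeeping of framing throughout this reduction: one must check that at every stage where a classical $L$-move would be invoked, the framing generator $t_i^{\mp 1}$ appears with the correct sign and index, and that the net framing of each link component is preserved under the whole sequence. Concretely, the delicate point is that an $L$-move over vs.\ under and introducing a $+$ vs.\ $-$ crossing each force a specific sign of $t_i$, and when one composes several such moves (as in generating a conjugation, or in clearing a Reidemeister I curl) the $t$-factors must cancel or combine to exactly reproduce the framing change dictated by the underlying framed-link isotopy. I would handle this by always working with the ``dotted'' presentation so that framing is a locally conserved quantity that can be tracked edge by edge, and by isolating a single lemma: \emph{any framed braid isotopy of the closure that is supported in a disc meeting the closure in a trivial tangle can be realized by framed braid isotopy and a bounded number of $RL$-moves}, proved by the same normal-form argument (collect all $t_j$'s to the top using the fourth relation, reduce the underlying braid part by the classical argument, then redistribute).

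Finally I would note that, having proved Theorem~\ref{framed L-Markov} directly, Theorem~\ref{Framed Markov} follows as a corollary: every $RM$-move is a special case of an $RL$-move (take $\alpha_2$ trivial and $i=n$), and conversely an $RL$-move decomposes into framed braid isotopy, conjugation, and one $RM$-move, exactly as an $L$-move decomposes into conjugation and one $M$-move in \cite{lambropoulou1997markov}; the framing generators match up because the $t_i^{\mp1}$ in the $RL$-move is carried by the fourth relation to the $t_{n+1}^{\mp1}$ in the $RM$-move.
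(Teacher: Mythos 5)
Your proposal is sound in outline but takes a genuinely different route from the paper. You propose to re-run the entire Lambropoulou--Rourke machinery in the framed setting: realize the framed-link isotopy by elementary moves, braid every intermediate diagram via the framed Alexander algorithm (tracking framing by dots), and verify that each elementary move becomes framed braid isotopy plus $RL$-moves. The paper instead uses the classical one-move Markov theorem as a black box: apply the forgetful projection $\pi:\cup_n RB_n \to \cup_n B_n$, note that $\pi(b_1)$ and $\pi(b_2)$ have isotopic closures and hence are connected by $L$-moves and braid isotopies, lift each $L$-move to an $RL$-move, and thereby reach a framed braid $b_2'$ with the \emph{same} underlying braid $\beta_2$ as $b_2$ but possibly different framing exponents. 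The remaining work is purely algebraic: since $RL$-moves preserve the framing of each closed component, the exponent vectors of $b_2$ and $b_2'$ have equal sums over each cycle of the permutation of $\beta_2$, and the discrepancy is removed by conjugating with a word in the $t_i$'s, which Lemma~\ref{Framed conj} shows is realizable by $RL$-moves; solvability of the resulting linear system is checked cycle by cycle after a preliminary conjugation putting $\beta_2$'s permutation in standard cycle form. What the paper's reduction buys is that none of your local claims (i)--(iii) need to be re-proved; what your route would buy, if carried out, is a self-contained proof not resting on the classical $L$-move theorem.

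The point at which your sketch is thinnest is exactly the step the paper isolates: even after the underlying braids agree and the per-component framings agree, the framing integers can sit on different strands of the same component, and one must show that this redistribution is generated by $RL$-moves. You gesture at it ("dots travel freely along core curves", "a framed conjugation is obtained from a pair of canceling $RL$-moves"), but note that conjugation by a framing generator $t_i$ is not a classical conjugation, so the classical "L-moves generate conjugation" argument does not apply verbatim; the paper devotes Lemma~\ref{Framed conj} and an explicit diagrammatic sequence to precisely this, and your write-up should do the same (or cite an equivalent argument) before the claims (i)--(iii), which carry the full weight of the Lambropoulou--Rourke analysis, can be considered established.
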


\noindent To prove Theorem~\ref{framed L-Markov} we first prove the following lemma:

\begin{lemma}
\label{Framed conj}
    A conjugation by a word generated by framed generators only can be realized as a finite sequence of $RL$-moves and planar isotopies.
\end{lemma}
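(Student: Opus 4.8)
The plan is to reduce a conjugation by a single framed generator $t_k^{\pm 1}$ to a short explicit sequence of $RL$-moves, and then chain these to handle an arbitrary word $w = t_{k_1}^{\epsilon_1}\cdots t_{k_r}^{\epsilon_r}$. Since conjugation $\alpha \mapsto w\alpha w^{-1}$ factors as the composition of conjugations by the individual letters $t_{k_j}^{\epsilon_j}$, it suffices to show that $\alpha \mapsto t_k^{\pm 1}\alpha t_k^{\mp 1}$ is achievable for each $k \in \{1,\dots,n\}$ and each sign.

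The key mechanism is the standard ``introduce a strand by an $L$-move, slide, then remove it by the inverse $L$-move'' argument, adapted to the framed setting. Concretely, I would first perform an $RL$-move on $\alpha = \alpha\cdot 1$ near the relevant spot to introduce a new $(n{+}1)$-st strand carrying a crossing generator $\sigma_n^{\pm 1}$ together with its compensating framing generator $t_n^{\mp 1}$ (or $t_{n+1}^{\mp 1}$, depending on which variant of the $RL$-move is used); the new strand runs over (for $RL_o$) or under (for $RL_u$) everything. Using framed braid isotopy together with the fourth family of relations $\sigma_i t_j = t_{s_i(j)}\sigma_i$, one can transport this new strand across the diagram — pulling it past $\alpha$ — so that the framing generator migrates from strand $n$ to strand $k$ and accumulates a conjugating effect; this is where the identity $\sigma_i^{-1} t_k \sigma_i = t_{s_i(k)}$ does the real work, exactly as in the discussion preceding the Framed Alexander theorem (the example $\sigma_1^{-1}t_1\sigma_1^2 = t_2\sigma_1$). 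Then I would close up by performing the inverse $RL$-move to delete the auxiliary strand, leaving behind precisely $t_k^{\pm 1}\alpha t_k^{\mp 1}$ (the crossing parts cancel because an $RL$-move and its inverse cancel, while the framing contribution is deposited on strand $k$).

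The main obstacle I anticipate is bookkeeping the signs and indices so that the framing generator that survives after the auxiliary strand is removed is exactly $t_k^{\pm 1}$ with the correct exponent and on the correct strand, rather than $t_k$ times some residual crossing word. This requires a careful choice — at each stage — of whether to use the $o_{i+1}$-variant or the $o_i$-variant of the $RL_o$-move (and similarly for $RL_u$), and possibly alternating $RL_o$ and $RL_u$ moves so that the over/under strands used to ``carry'' the framing past $\alpha$ do not themselves leave unwanted crossings. I would handle this by treating first the base case $k = n$ (where no transport past $\alpha$ is needed and the computation is immediate from the definition of the $RM$/$RL$-move), then reducing general $k$ to this case by conjugating with a suitable band-generator word — but since that conjugating word involves the $\sigma_i$'s, I would instead argue directly, tracking the strand index through the slide using only $\sigma_i t_j = t_{s_i(j)}\sigma_i$ and its inverse, so that the $\sigma$'s introduced by sliding in one direction are cancelled by those introduced sliding back. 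A clean way to organize this is to observe that an $RL$-move followed by its geometric inverse on a diagram in which the auxiliary strand has been isotoped to a different vertical position yields exactly a conjugation by the framing generator indexed by that position; this is essentially the framed analogue of the Lambropoulou–Rourke observation that $L$-moves generate conjugation, and I would present it with a figure showing the auxiliary over-strand being threaded past $\alpha$ and then retracted.
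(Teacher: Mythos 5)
Your proposal is correct and follows essentially the same route as the paper: conjugation by a single framing generator $t_i^{\pm 1}$ is realized by one $RL$-move that introduces an auxiliary over-strand together with its compensating framing generator, a framed braid isotopy, and then the inverse $RL$-move, and the general word is handled by composing these single-generator conjugations. The only remark worth adding is that the bookkeeping you anticipate (transporting the auxiliary strand from position $n$ to position $k$) is unnecessary, since the paper performs the $RL$-move locally at position $i$ using the $o_{i+1}$- and $o_i$-variants, so the auxiliary strand never has to be slid past $\alpha$.
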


\begin{proof}
The diagrammatic argument in Fig. \ref{fig:framed conjugation} gives us the sequence of $RL$-moves: $$t_i^{-1}at_i \to t_i^{-1}o_{i+1}(1_n)t_iσ_i^{-1}o_{i+1}(at_i) \to o_i(a)t_{i+1}σ_i^{-1}ο_i(1_n)\to a $$
\end{proof}

\begin{figure}[htp]
    \centering
    \includegraphics[width=13cm]{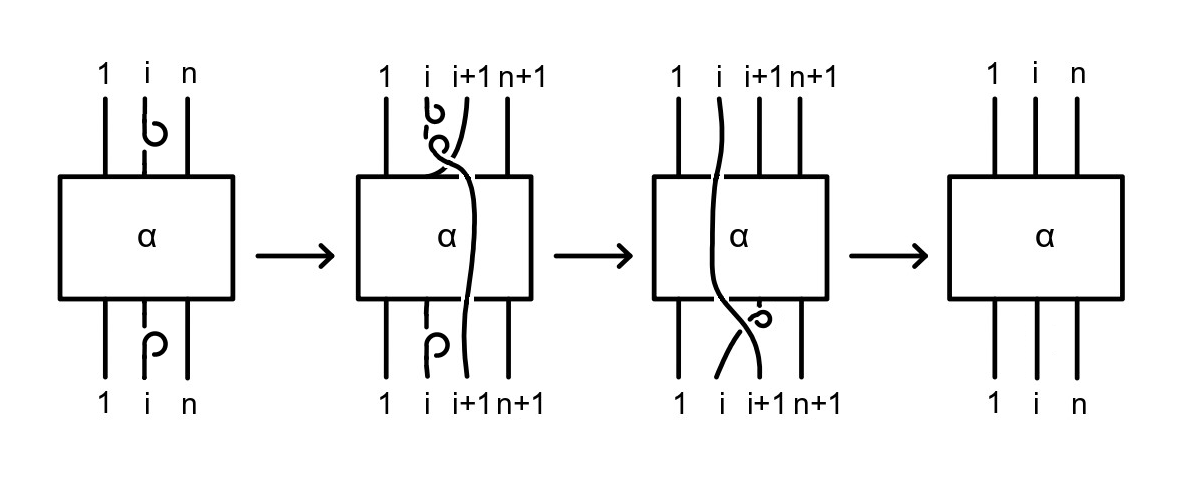}
    \caption{A conjugation involving a framing generator as a sequence of $RL$-moves}
    \label{fig:framed conjugation}
\end{figure}

\begin{proof}[Proof of Theorem \ref{framed L-Markov}]
For the `if' part, it is easy to see that the closure of a framed braid with an $RL$-move performed, for example: $$σ_{i+1}^{-1}\ldots σ_{n}^{-1} \hat{α_1}σ_{i}^{-1}\ldots σ_{n-1}^{-1}t_i^{\mp 1}σ_{n}^{\pm 1}σ_{n-1}\ldots σ_{i} \hat{α_2}σ_n\ldots σ_{i+1}$$ will result in two opposite curls that cancel each other out and the resulting framed link is regular isotopic to the closure of $α_1α_2$.

For the `only if' part let us take two framed braids $b_1,b_2$ which give isotopic framed link diagrams. Due to the relations in the framed braid group we could write 

$$b_1= t_1^{λ_1}\ldots t_n^{λ_n}β_1 \quad \textrm{where} \quad β_1 \in B_n$$
$$b_2= t_1^{κ_1}\ldots t_m^{κ_m}β_2 \quad \textrm{where} \quad β_2 \in B_m$$

\noindent By setting $$π_k:RB_k \to B_k$$ $$π_k(t_i) = 1, \quad \forall  i=1,\ldots ,k$$ $$π_k(σ_j)=σ_j \quad \forall j=1,\ldots ,k-1$$ 
we have a map $$π:\cup_nRB_n \to \cup_nB_n$$Since the closures of $b_1,b_2$ give isotopic framed links, this means that $π(b_1)$ and $π(b_2)$ are connected by 
a series of $L$-moves and braid isotopy moves, for they generate isotopic link diagrams when closed. We will use this sequence of $L$-moves to formulate 
a sequence of $RL$-moves for our framed braids. The construction goes as such. Whenever a braid isotopy move is performed, we perform the exact same move in the ribbon braids. Whenever an $L$-move is performed in the sequence, we perform an $RL$-move 
in the ribbon braids. Thus we have created a sequence of $RL$-moves connecting the framed braid diagrams $b_1=t_1^{λ_1}\ldots t_n^{λ_n}β_1$ and $b'_2=t_1^{δ_1}\ldots t_m^{δ_m}β_2$. Because the closure of $b_2$ and $b'_2$ represents isotopic framed links we have that $\sum_{i=0}^{2m}δ_i = \sum_{i=0}^{2m}κ_i$.\\
\indent We now need to modify the framing generators in $b'_2$ accordingly in order to get the framing generators of $b_2$. We can do that by using conjugations. It is a well known fact that two permutations are conjugate if and only if they have the same cycle structure in their analysis in disjoint cycles. So by using appropriate conjugations with framed braids of $0$ framing on the strands we can assume, without any loss in generality, that the projection of $β_2$ (or of $b_2,b_2'$ since the projection to the symmetric group forgets the framing) is the permutation $(1,\ldots ,s_1)(s_1+1,\ldots ,s_1+s_2)\ldots (s_1+\ldots +s_{p-1}+1,\ldots ,s_1+\ldots +s_p)$, for some positive integers $s_1,\ldots ,s_p$ where $s_1+\ldots +s_p=m$. Since these conjugations involve only the classical braid generators we can relate the projections of the framed braids, before and after applying the conjugations, by a finite sequence of $L$-moves. We then apply on the framed braids $b_2,b_2'$ the sequence of $RL$-moves corresponding to the sequence of $L$-moves describing the conjugations with $0$-framing.

\noindent Let $γ=t_1^{r_1}\ldots t_m^{r_m}$. $$γ^{-1}b_2=b'_2γ^{-1} \Leftrightarrow t_1^{λ_1-r_1}\ldots t_m^{λ_m-r_m}β_2 = t_1^{λ_1-r_{β_2(1)}}\ldots t_m^{λ_m-r_{β_2(m)}}β_2$$ $$ \Leftrightarrow 
t_1^{λ_1-r_1}\ldots t_m^{λ_m-r_m} = t_1^{δ_1-r_{β_2(1)}}\ldots t_m^{δ_m-r_{β_2(m)}} \Leftrightarrow $$ $$δ_i-r_i = δ_i-r_{β_2} \quad \forall i=1,\ldots ,m$$ since the $t_i$'s produce a free abelian group in $RB_m$. It's easy now to find closed integer solutions for this system of equations breaking it into smaller systems according to the permutation cycles.

\noindent Using Lemma \ref{Framed conj} the proof is complete.
\end{proof}

\begin{remark}
    If we are to rephrase Theorem~\ref{framed L-Markov} in the spirit of integer framing of  Smolinsky and Ko \cite{ko1992framed} instead of blackboard framing, then all we need is to consider $L$-moves for integer framed braids or for braids of solid cylinders. 
\end{remark}

\noindent We give the following definition:

\begin{definition}[Integer $RL$-moves]
Let $D$ be an integer framed braid diagram and $P$ a point of an arc of $D$ such that $P$ is not vertically aligned with any of the crossings of $D$. Then we can perform the following operation: Cut the arc at $P$, bend the two resulting smaller arcs apart slightly by a small isotopy and introduce two new vertical arcs to new top and bottom end-points in the same vertical line as $P$. Then, appoint two additional framings $k$ and $-k$ on the new top and bottom endpoints respectively ($k \in \{-1,0,1\}$). The new arcs are both oriented downwards and they run either both under or both over all other arcs of the diagram. Algebraically these moves correspond to the words: \begin{equation}
    α=α_1α_2 \to t_{i+1}^kσ_{i+1}^{-1}\ldots σ_{n}^{-1}o_{n+1}(α_1)σ_{i}^{-1}\ldots σ_{n-1}^{-1}σ_{n}^{\pm 1}σ_{n-1}\ldots σ_{i}o_{n+1}(α_2)σ_n\ldots σ_{i+1}t_{i+1}^{-k}
\end{equation}
\begin{equation}
    α=α_1α_2 \to t_{i+1}σ_{i+1}\ldots σ_{n}o_{n+1}(α_1)σ_{i}\ldots σ_{n-1}σ_{n}^{\pm 1}σ_{n-1}^{-1}\ldots σ_{i}^{-1}o_{n+1}(α_2)σ_n^{-1}\ldots σ_{i+1}^{-1}t_{i+1}^{-k}
\end{equation}
\end{definition}

\noindent This operation can be interpreted as cutting an open cylindrical neighborhood on a strand of a braid of solid cylinders and dragging the new boundary disks over or under any other cylindrical strand onto two newly created corresponding disk ends, while twisting the upper end $k$ times and twisting the lower end $-k$ times, for $k \in \{-1,0,1\}$. It is easy to see (Fig.~\ref{fig:integer framed conjugation}) that these moves can generate conjugations involving only framing generators and thus we have the following version of Theorem~\ref{framed L-Markov}.

\begin{figure}[htp]
    \centering
    \includegraphics[width=13cm]{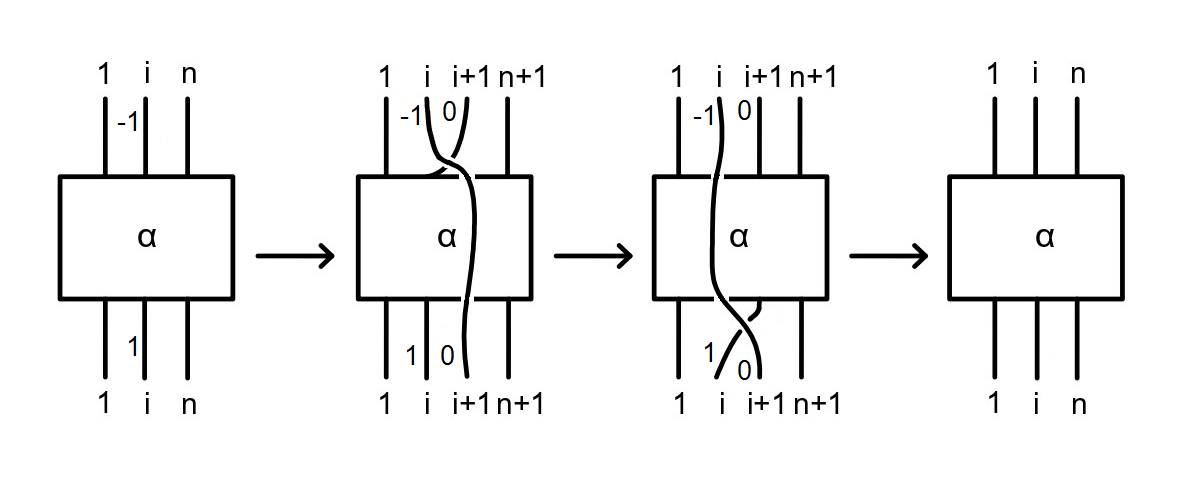}
    \caption{A conjugation involving a framing generator as a sequence of integer $RL$-moves}
    \label{fig:integer framed conjugation}
\end{figure}

\begin{theorem}
        The closure of two integer framed braids $b_1,b_2$ (not necessarily with the same number of strands) induces isotopic integer framed links if and only if $b_1$ and $b_2$ are related by framed braid isotopies and a series of integer $RL$-moves.
\end{theorem}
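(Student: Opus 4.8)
The plan is to deduce this theorem directly from Theorem~\ref{framed L-Markov} by exhibiting a dictionary between blackboard-framed braid diagrams and integer-framed braid diagrams that is compatible with closures and with the two notions of $L$-move. Concretely, there is a map $\Phi$ sending an integer-framed braid, written in the normal form $t_1^{\lambda_1}\cdots t_n^{\lambda_n}\beta$ with $\beta\in B_n$, to the blackboard-framed braid obtained by replacing each symbol $t_i$ by a single positive curl on the $i$-th strand; this is a bijection on the level of groups (both are $\mathbb{Z}^n\rtimes B_n$, by the Corollary and the Remark following Theorem~\ref{Framed Markov} identifying the difference as representational), and it is readily checked to intertwine the two closure operations up to framed-link isotopy, since a curl on a strand of a closed braid contributes exactly $+1$ to the blackboard framing of that component, matching the integer label. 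So the statement reduces to: under $\Phi$, a sequence of integer $RL$-moves and framed braid isotopies corresponds to a sequence of (blackboard) $RL$-moves and framed braid isotopies, and conversely.

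First I would record that framed braid isotopies correspond on both sides, which is essentially the content of Fig.~\ref{fig:flat}, Fig.~\ref{fig:flat2}, and the discussion comparing presentations (a). Then I would compare the algebraic expressions: the integer $RL$-move (equations (5)--(6)) is, after applying $\Phi^{-1}$ on the blackboard side, exactly the $RL$-move of equations (3)--(4) but with the compensating $t$-symbol moved to the other end of the word and the additional freedom of choosing $k\in\{-1,0,1\}$ rather than a forced $t_i^{\mp1}$. The case $k=\pm 1$ of the integer move is, up to a conjugation by $t_{i+1}^{\pm1}$, precisely a blackboard $RL$-move; the case $k=0$ is a plain $L$-move, which is an integer $RL$-move and, on the blackboard side, is an $RL$-move followed by the cancellation of a curl (Fig.~\ref{fig:cancelling}) or, more precisely, an $RL$-move together with the observation built into the proof of Theorem~\ref{framed L-Markov} that one may freely redistribute framing exponents. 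The conjugations by framing generators needed to move the compensating $t$ from one end to the other are available on both sides: by Lemma~\ref{Framed conj} on the blackboard side, and by the integer analogue indicated in Fig.~\ref{fig:integer framed conjugation} on the integer side, so neither direction introduces moves outside the allowed repertoire.

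Assembling these observations, the `if' direction follows because each integer $RL$-move, transported by $\Phi$, becomes a finite composition of blackboard $RL$-moves, framed braid isotopies, and framing-conjugations, and framing-conjugations are themselves compositions of blackboard $RL$-moves by Lemma~\ref{Framed conj}; hence $\Phi(b_1)$ and $\Phi(b_2)$ are $RL$-equivalent, so by the `only if' part of Theorem~\ref{framed L-Markov} their closures are isotopic framed links, and by the compatibility of $\Phi$ with closures the closures of $b_1$ and $b_2$ are isotopic. The `only if' direction runs the same argument in reverse: isotopic closures of $b_1,b_2$ force $\Phi(b_1),\Phi(b_2)$ to have isotopic closures, hence to be $RL$-equivalent by Theorem~\ref{framed L-Markov}; pulling each blackboard $RL$-move back through $\Phi$ gives an integer $RL$-move (of the $k=0$ type for the crossing, plus the curl bookkeeping) up to the integer framing-conjugations of Fig.~\ref{fig:integer framed conjugation}, and one redistributes the resulting framing exponents exactly as in the last paragraph of the proof of Theorem~\ref{framed L-Markov}, solving the same small linear systems cycle by cycle.

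The main obstacle I anticipate is not conceptual but bookkeeping: making the correspondence between the $k\in\{-1,0,1\}$ parameter of the integer move and the forced $t_i^{\mp1}$ of the blackboard move precise, and verifying that the shift of the compensating framing symbol from the interior of the word (equations (3)--(4)) to the outer ends (equations (5)--(6)) is absorbed by framing-conjugations that are themselves realizable by the allowed moves on each side. One must be slightly careful that $\Phi$ is well-defined on diagrams and not merely on group elements — i.e. that the choice of normal form does not matter — but this is exactly handled by the framing-conjugation lemmas, so I would state the theorem's proof as: apply $\Phi$, invoke Theorem~\ref{framed L-Markov}, and check move-by-move compatibility using Lemma~\ref{Framed conj} and Fig.~\ref{fig:integer framed conjugation}, leaving the linear-system bookkeeping to the reader as in the proof of Theorem~\ref{framed L-Markov}.
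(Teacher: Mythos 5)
Your reduction hinges on the claim that the dictionary $\Phi$ (replace each $t_i$ by a curl, i.e.\ read the same word in the blackboard convention) intertwines the two closure operations up to framed isotopy, and that each integer $RL$-move transports to blackboard $RL$-moves plus framing conjugations. Both claims fail, and for the same reason: in the blackboard convention the framing of a component of a closed braid is its writhe, which includes the crossings of the underlying braid word $β$ itself, while in the integer convention (this is exactly the point of the Remark contrasting Smolinsky--Ko's setting with ours) those crossings contribute nothing and the framing is just the sum of the integer labels. The discrepancy is the self-writhe of each component, which is diagram-dependent, so it does not cancel between $b_1$ and $b_2$. Concretely, take $b_1=σ_1$ in the integer framed $RB_2$ with labels $(0,0)$ and $b_2$ the trivial integer framed braid on one strand with label $0$: they are related by a single integer $RL$-move with $k=0$ and both close to the $0$-framed unknot, yet $\Phi(b_1)$ closes to the $+1$-framed unknot and $\Phi(b_2)$ to the $0$-framed unknot, so by Theorem~\ref{framed L-Markov} they are \emph{not} blackboard $RL$-equivalent. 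This also shows that the $k=0$ integer move (a plain $L$-move) cannot be absorbed into blackboard $RL$-moves, framed isotopies and framing conjugations, since all of those preserve the blackboard-framed isotopy class of the closure; curl cancellation (two \emph{opposite} curls) and redistribution of framing exponents within a component do not change the total framing and cannot repair a genuine $\pm 1$ shift. So both directions of your argument break at the point where Theorem~\ref{framed L-Markov} is invoked for $\Phi(b_1),\Phi(b_2)$.

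A dictionary can be made to work, but it must be writhe-corrected component by component (adjust the integer labels of each component of $\hat β$ by its self-writhe), and verifying that this corrected map matches the two move sets is essentially the whole content of the theorem rather than bookkeeping. The paper instead avoids any translation between conventions: it observes (Fig.~\ref{fig:integer framed conjugation}) that integer $RL$-moves generate conjugations by framing generators, i.e.\ the integer analogue of Lemma~\ref{Framed conj}, and then simply reruns the proof of Theorem~\ref{framed L-Markov} in the integer setting --- project to $\cup_n B_n$, apply the Lambropoulou--Rourke $L$-move theorem, lift each $L$-move to an integer $RL$-move with $k=0$ (no compensating curl is needed precisely because crossings do not affect integer framing), and finally correct the integer labels by framing conjugations, solving the same linear systems cycle by cycle. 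I would redo your proof along those lines rather than through $\Phi$.
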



\section{The plat closure, the Hilden groups $H_{2n}$ and the plat equivalence}
\label{sec:Hilden_plat}

Let $M$ be a connected oriented topological manifold (possibly with boundary) and let $Q$ be a finite (possibly empty) subset in the interior of $M$, denoted $intM$. By a \textit{self-homeomorphism} of the pair $(M,Q)$ we mean a homeomorphism $f:M \to M$, so that $f$ fixes $\partial M$ point-wise and fixes $Q$ set-wise. If $\partial M = \emptyset$ then we also need $f$ to preserve the orientation of $M$. Two self-homeomorphisms $f,g$ of $(M,Q)$ are said to be isotopic if there is a family $\{f_t\}$, with $t \in [0,1]$, of self-homeomorphisms of $(M,Q)$ so that $f_0=f$ and $f_1=g$ and the map $F:M \times [0,1] \to M$, where $F(x,t)=f_t(x)$ is continuous. Isotopy of self-homeomorphisms induces an equivalence relation. Let $G$ be the group of all self-homeomorphisms of $(M,Q)$, with map composition for group multiplication and let $H$ be the normal subgroup of $G$ of all self-homeomorphisms of $(M,Q)$ that are isotopic to the identity. The group $G/H$ is denoted MCG$(M,Q)$ and called the {\it mapping class group} of $(M,Q)$. 

 It is a well known result that the braid group $B_n$ is isomorphic to MCG$(D^2,Q)$, where $D^2$ is the standard two-dimensional disc in $\Bbb{R}^2$ and $Q$ is a set of $n$ distinct points in the interior of $D^2$. Let $\Bbb{R}^3_+ = \{(x,y,z) \in \Bbb{R}^3|z \ge 0 \}$ and $\{a_i\}_{i=1}^n$ be a family of properly embedded, pairwise disjoint unknotted arcs in $\Bbb{R}^3_+$ so that $\partial a_i \in \partial \Bbb{R}^3_+$ for all $i=1,\ldots ,n$ and denote $a^*_n = a_1 \cup \ldots  \cup a_n$. Then MCG$(D^2, \partial a^*_n)=B_{2n}$, where $D^2$ is canonically embedded in $\partial \Bbb{R}^3_+$ and $\partial a^*_n$ lies in the interior of $D^2$. The subgroup of $B_{2n}$ consisting of the equivalence classes of self-homeomorphisms of $(D^2, \partial a^*_n)$ that can be extended to self-homeomorphisms of $(\Bbb{R}^3_+,a^*_n)$ is called the \textit{Hilden Group on 2n strands} and is denoted $H_{2n}$ (see \cite{hilden1975generators}). Equivalently, $H_{2n}$ is the subgroup of $B_{2n}$ of all the braids that stabilize  $a^*_n$ realized as a $(0,2n)$-tangle (see Fig. \ref{fig:Hilden1}), through the action of $Β_{2n}$ on $(0,2n)$-tangles that places the diagram of $a^*_n$ on top of a braid diagram. In other words the resulting $(0,2n)$-tangle is isotopic to the tangle $a^*_n$. For an example of a braid diagram in $H_4$ see Fig.~\ref{fig:Hilden_braid}.

 Hilden in \cite{hilden1975generators} gave generators for the Hilden group $H_{2n}$. Tawn in \cite{tawn2007presentation} showed that the Hilden group on $2n$ strands is generated by the elements (see Fig.~\ref{fig:Hilden_generators}): \begin{align*}
    P_i &= σ_{2i}σ_{2i-1}σ_{2i+1}^{-1}σ_{2i}^{-1} \quad \textrm{for} \quad 1 \le i \le n-1\\
    S_j &= σ_{2j}σ_{2j-1}σ_{2j+1}σ_{2j} \quad \textrm{for} \quad 1 \le j \le n-1\\
    Θ_k &= σ_{2k-1} \quad \quad \quad \quad \quad \quad \textrm{for} \quad 1 \le k \le n
\end{align*} subject to relations:

\begin{equation}
 \begin{aligned}
 \label{Hilden relations1}
     P_iP_j &= P_jP_i \quad \quad \textrm{for} \quad |i-j| > 1\\ 
    P_iP_jP_i &= P_jP_iP_j \quad \textrm{for} \quad |i-j| = 1\\ 
    S_iS_j &= S_jS_i \quad \quad \textrm{for} \quad |i-j| > 1\\ 
    S_iS_jS_i &= S_jS_iS_j \quad \textrm{for} \quad |i-j| = 1\\ 
    P_iS_j &= S_jP_i \quad \quad \textrm{for} \quad |i-j| > 1\\
     P_iP_j &= P_jP_i \quad \quad \textrm{for} \quad |i-j| > 1\\ 
     P_iP_jP_i &= P_jP_iP_j \quad \textrm{for} \quad |i-j| = 1\\ 
     S_iS_j &= S_jS_i \quad \quad \textrm{for} \quad |i-j| > 1\\ 
     S_iS_jS_i &= S_jS_iS_j \quad \textrm{for} \quad |i-j| = 1\\ 
     P_iS_j &= S_jP_i \quad \quad \textrm{for} \quad |i-j| > 1 \\
     P_iS_{i+1}S_i &= S_{i+1}S_iP_{i+1} \\
     P_{i+1}P_iS_{i+1} &= S_iP_{i+1}P_i \\
     P_{i+1}S_iS_{i+1} &= S_iS_{i+1}P_i \\
     P_iΘ_iS_iP_i &= S_iΘ_i \\
     P_iΘ_j &= Θ_jP_i \quad \quad \textrm{for} \quad j \neq i \quad \textrm{or} \quad i+1\\
     P_iΘ_{i+1} &= Θ_iP_{i+1} \\
     S_iΘ_j &= Θ_jS_i \quad \quad \textrm{if} \quad j \neq i \quad \textrm{or} \quad i+1\\
     S_iΘ_j &= Θ_kS_i \quad \quad \textrm{if} \quad \{i,i+1\} = \{j,k\}\\
     Θ_iΘ_j &= Θ_jΘ_i \quad \quad \textrm{for} \quad 1 \le i,j \le n
 \end{aligned}
\end{equation}

\begin{figure}[htp]
    \centering
    \includegraphics[width=13cm]{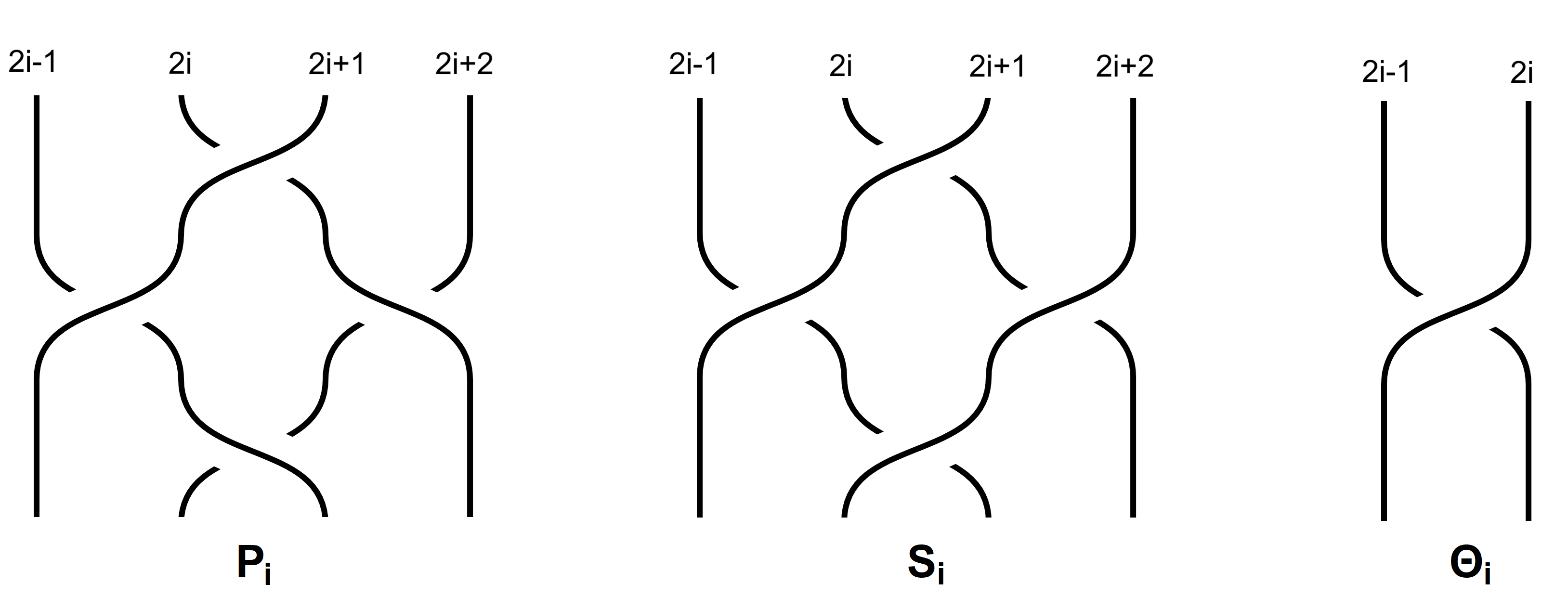}
    \caption{The Hilden group generators: $P_i, S_i$ and $Θ_i$.}
    \label{fig:Hilden_generators}
\end{figure}

\begin{figure}[htp]
    \centering
    \includegraphics[width=4.5cm]{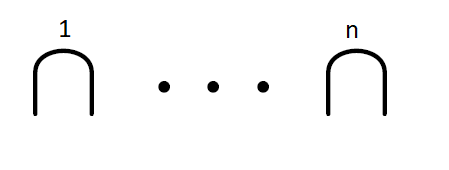}
    \caption{Diagrammatic representation of  $a^*_n$.}
    \label{fig:Hilden1}
\end{figure}

\begin{figure}[htp]
    \centering
    \includegraphics[width=3cm]{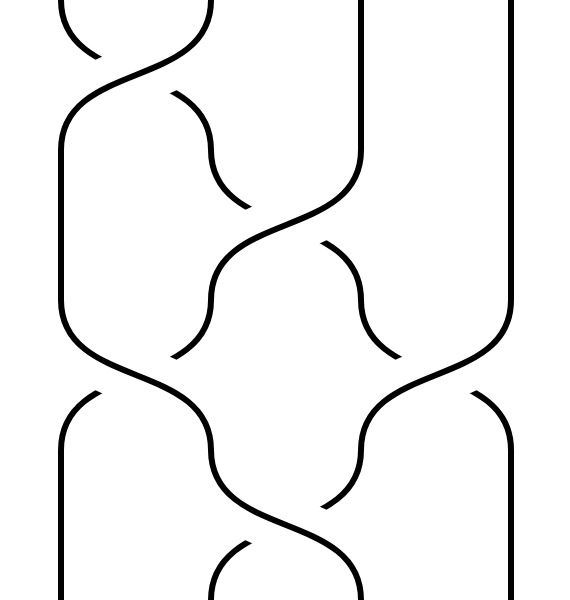}
    \caption{The braid $σ_1σ_2σ_3σ_1^{-1}σ_2^{-1}$ is an element of $H_4$.}
    \label{fig:Hilden_braid}
\end{figure}

\begin{definition}
    Let  $a^{*-}_n$ be the diagram depicted in Fig. \ref{fig:Hilden2} and let $β \in B_{2n}$. Then the diagram $a^*_n \, β \, a^{*-}_n$ is an unoriented link diagram. The ambient isotopy class of the link corresponding to the equivalence class of $a^*_n \, β \, a^{*-}_n$ is called the \textit{plat closure of $β$}.
\end{definition}  

\begin{figure}[htp]
    \centering
    \includegraphics[width=4.5cm]{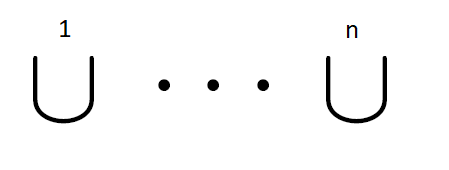}
    \caption{Diagrammatic representation of $a^{*-}_n$.}
    \label{fig:Hilden2}
\end{figure}

    A theorem of Hilden and Birman tells us that any link $L$ can be realized as the plat closure of a braid in $B_{2n}$ for some positive integer $n$. This result can be also retrieved by the classical Alexander theorem. Simply put $L$ in standard closure form of a braid $β \in B_n$ and then drag the out-most upwards strand between the first and second strand of $β$. Then take the newly created out-most upwards strand and drag it between the second and third strand of $β$. Doing this inductively shall yield a $β_0 \in B_{2n}$ so that $L$ is isotopic to $a^*_nβ_0a^{*-}_n$ (cf. \cite{cavicchioli2023passing}).

In \cite{birman1976stable}, Birman used Hilden's generators of the Hilden group and proved the following analogue of the Markov theorem:

\begin{theorem}[Birman]
\label{th:Birman}
Given two braids $β \in B_{2n}$ and $β' \in B_{2m}$ with isotopic plat closures, then there exists a sequence of braids
$β = β_0 \to β_1 \to β_2 \to \dots \to β_N = β'$
such that the plat closure of each $β_i$ is equal to that of $β$ or $β'$,
and such that each move $β_i \to β_{i+1}$ is either a double co-set move or a stabilization move. More precisely the moves are: 

\begin{enumerate}
    \item $\quad β \leftrightarrow h_1βh_2 \quad \textrm{where} \quad h_1,h_2 \in H_{2n} \quad \textrm{(Double coset move in $B_{2n}$)}$
    \item $\quad β \leftrightarrow βσ_{2n}^{\pm} \in B_{2n+2} \quad \textrm{(Stabilization move)}$
\end{enumerate}

\end{theorem}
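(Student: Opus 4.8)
The plan is to deduce Theorem~\ref{th:Birman} from the classical Markov theorem (Theorem~\ref{Markov}), or rather from its one-move refinement, by exploiting the fact recalled just before the statement that the standard closure and the plat closure are interchangeable presentations of the same link. The ``if'' direction is the easy one: a double coset move $\beta\leftrightarrow h_1\beta h_2$ does not change the plat closure because, by the very definition of $H_{2n}$, the plat closure $a^*_n\,\beta\,a^{*-}_n$ depends only on the double coset $H_{2n}\,\beta\,H_{2n}$ ($h_1$ fixes the top cap-tangle $a^*_n$ up to isotopy and, symmetrically, $h_2$ fixes the bottom cup-tangle $a^{*-}_n$); and for a stabilization $\beta\leftrightarrow\beta\sigma_{2n}^{\pm}\in B_{2n+2}$ the new top cap and new bottom cup of $a^*_{n+1}$ and $a^{*-}_{n+1}$, together with the extra crossing, bound a curl that is undone by a Reidemeister~I move on the underlying unoriented diagram. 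The work is in the ``only if'' direction.

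There the central device is a doubling correspondence. Given $\gamma\in B_N$ with standard closure $\widehat{\gamma}$, the procedure recalled above of repeatedly dragging the outermost returning strand of $\widehat{\gamma}$ between consecutive strands of $\gamma$ produces a braid $D(\gamma)\in B_{2N}$ whose plat closure is exactly $\widehat{\gamma}$. The first auxiliary claim to establish is that $D$ is compatible with moves on the braid side: a braid relation in $B_N$ becomes a sequence of braid relations in $B_{2N}$, and a Markov stabilization $\gamma\leftrightarrow\gamma\sigma_N^{\pm}$ becomes a plat stabilization of $D(\gamma)$, possibly bracketed by double coset moves that reposition the new cap and cup. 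Conjugations are the awkward case for a naive doubling, so here I would prefer to run the one-move Markov theorem of Lambropoulou--Rourke \cite{lambropoulou1997markov}: then only $L$-moves of $\gamma$ need be matched, an $L$-move being carried by $D$ into a plat stabilization of $D(\gamma)$ up to Hilden braids, with conjugations absorbed automatically. The upshot is that $D$ sends Markov-equivalence of braids to equivalence under moves~(1) and~(2).

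The heart of the argument --- and the step I expect to be the main obstacle --- is the converse normalization: \emph{every} $\beta\in B_{2n}$ with plat closure $L$ can be connected, using only moves~(1) and~(2), to $D(\gamma)$ for some $\gamma$ with $\widehat{\gamma}=L$. I would put the plat diagram $a^*_n\,\beta\,a^{*-}_n$ in Morse position for the height function, so that it has exactly $n$ local maxima (from the top caps), $n$ local minima (from the bottom cups) and finitely many crossings, and then push all maxima to the top and all minima to the bottom by elementary isotopies, recording each passage of a critical point past a crossing or past another critical point as a double coset move --- these are precisely the elementary moves realized by the Hilden generators $P_i$, $S_j$, $\Theta_k$ under the relations~\eqref{Hilden relations1} --- and recording each birth or death of a cap--cup pair as a plat (de)stabilization. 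Once all maxima sit on top, all minima on the bottom and the middle portion is a braid, the diagram is by construction the plat closure of some $D(\gamma)$ with $\widehat{\gamma}=L$. Carrying out this rearrangement with the allowed moves only, and in particular checking that each rearranging isotopy keeps the braid inside its current double coset, is exactly Birman's technical input \cite{birman1976stable} and is where the explicit presentation of $H_{2n}$ is indispensable.

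With these ingredients the proof assembles at once. Let $\beta\in B_{2n}$ and $\beta'\in B_{2m}$ have isotopic plat closures, and write $L$ for the common isotopy class. The normalization step connects $\beta$ to $D(\gamma)$ with $\widehat{\gamma}=L$ and $\beta'$ to $D(\gamma')$ with $\widehat{\gamma'}=L$, through moves~(1) and~(2), each intermediate plat closure being $L$. Since $\widehat{\gamma}=L=\widehat{\gamma'}$, the (one-move) Markov theorem connects $\gamma$ to $\gamma'$ on the braid side, and pushing this chain through $D$ connects $D(\gamma)$ to $D(\gamma')$ by moves~(1) and~(2), again with every intermediate plat closure equal to $L$. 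Concatenating the three chains yields a sequence $\beta=\beta_0\to\dots\to\beta_N=\beta'$ in which every $\beta_i$ has plat closure equal to that of $\beta$ (equivalently of $\beta'$) and every move is of type~(1) or~(2), which is what the theorem asserts. The one genuinely delicate ingredient is the Morse-theoretic normalization of a general plat, which carries the full weight of the statement.
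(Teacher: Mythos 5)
This theorem is not proved in the paper at all: it is quoted as a known result of Birman, with a pointer to \cite{birman1976stable}, so there is no in-paper argument to compare yours against and your proposal has to stand on its own. Its ``if'' direction is fine, and the overall strategy (reduce to the Markov/$L$-move theorem via the closed-braid-to-plat doubling map $D$) is a legitimate alternative to Birman's original argument, which instead analyzes a generic ambient isotopy between the two plats directly, using Hilden's identification of the stabilizer of the trivial tangle $a^*_n$.

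However, the ``only if'' direction has a genuine gap exactly where you place ``the full weight of the statement.'' Your normalization step --- that every $\beta\in B_{2n}$ with plat closure $L$ is connected by moves (1) and (2) to some $D(\gamma)$ with $\widehat{\gamma}=L$ --- is not established by the Morse-position argument you sketch: the diagram $a^*_n\,\beta\,a^{*-}_n$ is already in Morse position with its $n$ maxima at the top and $n$ minima at the bottom, so ``pushing maxima up and minima down'' changes nothing, and a diagram in plat position is in general \emph{not} of the form $D(\gamma)$; the latter is the very special plat obtained by re-folding a closed braid, in which all but $n$ ``returning'' arcs are monotone. To reach $D(\gamma)$ form you would have to braid the plat (an Alexander-type algorithm applied to the $2n$-plat) and show that every step of that braiding is realized by double coset and stabilization moves --- and that is essentially the content of the theorem itself, not a routine isotopy bookkeeping. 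Your explicit appeal to ``Birman's technical input \cite{birman1976stable}'' at this point makes the argument circular as a proof of Birman's theorem. The transfer of the Markov/$L$-move chain through $D$ (conjugation or $L$-move upstairs becoming double coset moves plus one plat stabilization downstairs) is plausible and provable, but it is asserted rather than proved; it too needs an argument, since it is where the Hilden group and its generators actually enter. As it stands, the proposal reduces the theorem to an unproved (and, in the form sketched, incorrectly justified) normalization claim.
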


\begin{remark}
 For an algorithmic transition from standard closure to plat closure and vice versa for the cases of classical links, links in a handlebody and links in thickened surfaces cf.~\cite{cavicchioli2023passing}.    
\end{remark}


\section{Framed Hilden Groups and Pure Framed Hilden Groups}
\label{sec:framedHilden_plat}

Our goal is to define the framed Hilden groups as subgroups of the framed braid groups, give a presentation for this type of groups analogous to Tawn's presentation, and proceed with defining and presenting the pure framed Hilden groups.

\subsection{The framed Hilden group}

Let $\{D_i\}_{i=1}^n$ be a collection of pairwise disjoint closed discs properly embedded in the interior of the disc $D^2$ and let $x_i \in \partial D_i$ for $i=1,\ldots,n$. Let $M = D^2- \cup_{i=1}^n intD_i$. Let $f:M \to M$ be a homeomorphism so that $f(x)=x$ for all $x \in \partial D^2$ and that $f$ fixes the set $Q = \{x_1,\ldots,x_n\}$ set-wise. Because $f$ is the identity on the boundary of $D^2$ then $f$ fixes set-wise $\cup_{i=1}^n \partial D_i$ so that $f(\partial D_i)= \partial D_j$ if and only if $f(x_i)=x_j$. Since $\partial D_i$ is homeomorphic to $S^1$ then we can radially expand $f$ to the interiors of $D_i$. Abusing notation we shall also denote this new homeomorphism $f$. Now, $f$ induces a permutation to the radii $r_i$ connecting the center $c_i$ of $D_i$ with $x_i$. Our new $f$ is now an orientation preserving homeomorphism of $D^2$ that is the identity on $\partial D^2$. By a theorem of Alexander, there is an isotopy $\{h_t\}_{t \in [0,1]}$ so that $h_t:D^2 \to D^2$ is a homeomorphism keeping the boundary fixed point-wise for every $t \in [0,1]$, $h_1=id$  and $h_0=f$. The set $\cup_{i=1}^n\{(h_t(r_i),t) \, | \, t\in [0,1]\}$ is a geometric framed braid embedded in the cylinder $D^2 \times [0,1]$. It is easy to see that the mapping class group MCG$(M,Q)$ is isomorphic to the framed braid group $RB_n$ on $n$ ribbons (see \cite{prasolov1997knots}, page 63). Note that, also in the framed case, our homeomorphisms are the identity on the boundary of $D^2$ and not on the boundary of $M$.

\begin{figure}[htp]
    \centering
    \includegraphics[width=10cm]{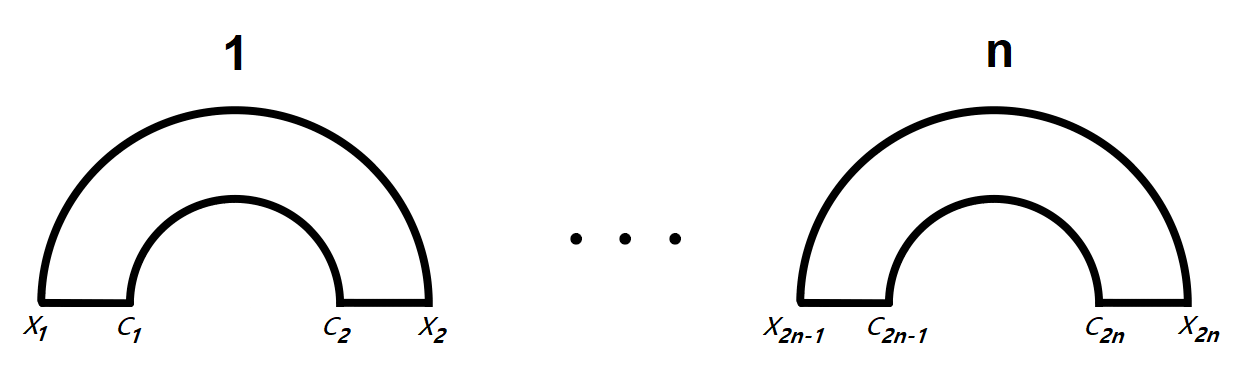}
    \caption{The element $A^*_n$.}
    \label{fig:framed_A*}
\end{figure} 

 Let us consider now the diagram in Fig.~\ref{fig:Hilden1} as a diagram of $n$ contracted ribbons. We denote $A^*_n$ to be the diagram in Fig. \ref{fig:framed_A*}. Recall that a ribbon is a set parametrized as $γ \times [0,1]$ where $γ$ is an arc and the endpoint intervals of the ribbon are the sets $γ(0) \times [0,1]$ and $γ(1) \times [0,1]$.
 
\begin{definition}
     Let $D^2$ be canonically embedded in $\partial \Bbb{R}^3_+$ and let the endpoint intervals of $A^*_n$ lie entirely in the interior of $D^2$. Then the subgroup of $B_{2n}$ that stabilizes $A^*_n$ shall be called the \textit{Framed Hilden Braid Group on 2n ribbons} and denoted $RH_{2n}$. Equivalently, $RH_{2n}$ consists of the equivalence classes of self-homeomorphisms of $(M,Q)$ where $M = D^2- \cup_{i=1}^{2n} intD_i$ and $Q$ is the set of points $\{x_1,\ldots , x_{2n} \}$ of $A^*_n$,  that can be extended to self-homeomorphisms of $(\Bbb{R}^3_+,A^*_n)$.
\end{definition}

\begin{proposition}
\label{FrHilGr}
   The Framed Hilden group $RH_{2n}$ is isomorphic to the semi-direct product $\mathbb{Z}^n \rtimes H_{2n}$.
\end{proposition}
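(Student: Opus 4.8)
The plan is to exhibit $RH_{2n}$ as an extension $1 \to \mathbb{Z}^n \to RH_{2n} \to H_{2n} \to 1$ that is split and has abelian kernel on which $H_{2n}$ acts, and then to identify the action. First I would use the decomposition $RB_{2n} \cong \mathbb{Z}^{2n} \rtimes B_{2n}$ from the corollary after Theorem~\ref{Artin}, writing every framed braid uniquely as $t_1^{\lambda_1}\cdots t_{2n}^{\lambda_{2n}}\beta$ with $\beta \in B_{2n}$. Applying the projection $\pi_{2n}\colon RB_{2n}\to B_{2n}$ that kills all $t_i$, I would show $\pi_{2n}(RH_{2n}) = H_{2n}$ and that $\pi_{2n}$ restricted to $RH_{2n}$ is split: the classical Hilden braids sitting inside $RB_{2n}$ as framed braids with zero framing on every ribbon form a copy of $H_{2n}$ inside $RH_{2n}$, because a framed self-homeomorphism with no ribbon twisting that extends over $(\mathbb{R}^3_+, A_n^*)$ is exactly a classical Hilden homeomorphism extending over $(\mathbb{R}^3_+, a_n^*)$. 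This gives the semidirect-product structure once I pin down the kernel.

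The core of the argument is to compute $\ker(\pi_{2n}|_{RH_{2n}}) = RH_{2n}\cap \mathbb{Z}^{2n}$, i.e. which pure-framing elements $t_1^{\lambda_1}\cdots t_{2n}^{\lambda_{2n}}$ stabilize $A_n^*$. Here I would argue diagrammatically: placing the diagram $A_n^*$ on top of a framed braid that only twists ribbons, the $n$ arcs of $A_n^*$ join the endpoint intervals in consecutive pairs $(2i-1,2i)$, and each cap absorbs the twisting of the two ribbons below it — but the cap can only absorb the \emph{sum} of the two framings on the paired strands as an overall twist of that component of $A_n^*$, which must be zero for the tangle to be isotopic to $A_n^*$. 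Thus the stabilizing pure-framing elements are exactly those with $\lambda_{2i-1} + \lambda_{2i} = 0$ for each $i = 1,\dots, n$; this subgroup of $\mathbb{Z}^{2n}$ is free abelian of rank $n$, generated by $t_{2i-1}t_{2i}^{-1}$. Hence $\ker(\pi_{2n}|_{RH_{2n}}) \cong \mathbb{Z}^n$, and combined with the splitting we obtain $RH_{2n} \cong \mathbb{Z}^n \rtimes H_{2n}$, where $H_{2n}$ acts on $\mathbb{Z}^n$ via the restriction of the $B_{2n}$-action on $\mathbb{Z}^{2n}$ to the invariant sublattice — concretely, an element of $H_{2n}$ permutes the paired-difference generators $t_{2i-1}t_{2i}^{-1}$ according to how it permutes the caps of $A_n^*$, up to sign coming from whether a strand's role as "first" or "second" in its pair is reversed.

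I would then close the loop by checking the action is well defined, i.e. that $H_{2n}$ genuinely preserves the sublattice $\{\sum \lambda_i = 0 \text{ on each pair}\}$: this is immediate because the permutation $\overline{\beta} \in S_{2n}$ induced by a Hilden braid $\beta$ necessarily sends the cap-partition $\{\{1,2\},\{3,4\},\dots\}$ to a partition refining the same cap-structure after isotopy of $A_n^*$ — indeed for $\beta$ to stabilize $A_n^*$ the induced permutation must carry each cap to a cap, so $\overline{\beta}$ lies in the hyperoctahedral group $S_2 \wr S_n$, which is exactly the group of symmetries of the pairing, and this is the group acting on $\mathbb{Z}^n \cong \{(\lambda_1,-\lambda_1,\dots,\lambda_n,-\lambda_n)\}$.

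The main obstacle I anticipate is making the diagrammatic claim "a cap absorbs only the total twist of the pair it bounds, and nonzero total twist changes the isotopy type of the $(0,2n)$-tangle $A_n^*$" into a rigorous statement — one must verify that $t_{2i-1}t_{2i}$ applied below the $i$-th cap of $A_n^*$ yields a tangle with a genuine ribbon twist (a curl on the capped arc) that is not removable by tangle isotopy fixing the boundary, whereas $t_{2i-1}t_{2i}^{-1}$ yields two curls of opposite sign on the capped arc which cancel by regular isotopy (cf. Fig.~\ref{fig:cancelling}). This is the framed analogue of the fact that a cap kills a full twist of the pair only up to sign, and it is where the rank drops from $2n$ to $n$; everything else is bookkeeping with the semidirect-product presentation inherited from $RB_{2n}$.
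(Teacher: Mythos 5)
Your overall strategy is the same as the paper's (project to $B_{2n}$ by killing the $t_i$, compute the kernel of the restriction to $RH_{2n}$, split the sequence), and your kernel computation is exactly the paper's: a pure-framing word $t_1^{\lambda_1}\cdots t_{2n}^{\lambda_{2n}}$ stabilizes $A^*_n$ iff $\lambda_{2i-1}+\lambda_{2i}=0$ for each $i$, so $\ker(\pi|_{RH_{2n}})=\langle t_{2i-1}t_{2i}^{-1}\rangle\cong\mathbb{Z}^n$. The gap is in your splitting. You claim that the zero-framing copy of $H_{2n}$ inside $RB_{2n}$ lies in $RH_{2n}$, i.e.\ that a classical Hilden braid with all framings zero stabilizes the ribbon tangle $A^*_n$. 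This fails already for the Tawn generator $\Theta_k=\sigma_{2k-1}$: capping the crossing of ribbons $2k-1$ and $2k$ produces a curl on the capped band, i.e.\ a blackboard-framing change of $\pm1$ that regular isotopy cannot remove --- this is precisely the phenomenon you yourself invoke in your kernel computation (a single curl on a cap is not cancellable, only opposite curls cancel as in Fig.~\ref{fig:cancelling}). So $\sigma_{2k-1}\notin RH_{2n}$, the zero-framing section of $\pi$ does not restrict to a section over $H_{2n}$, and your heuristic that ``a framed self-homeomorphism with no ribbon twisting extending over $(\mathbb{R}^3_+,A^*_n)$ is exactly a classical Hilden homeomorphism'' breaks down, because the extension of $\sigma_{2k-1}$ over the half-space necessarily twists the band.

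The paper repairs exactly this point: it lifts $P_i$ and $S_j$ with zero framing but lifts $\Theta_k$ to the framing-compensated element $\theta_k=t_{2k-1}\sigma_{2k-1}$, checks that these lifts satisfy Tawn's relations (\ref{Hilden relations1}), and thereby obtains a homomorphic section $\varphi\colon H_{2n}\to K_{2n}\subset RH_{2n}$ inverse to $\pi|_{K_{2n}}$; this also furnishes the surjectivity of $\pi|_{RH_{2n}}$ onto $H_{2n}$, which in your write-up rests on the faulty section. Once the corrected section is in place, the rest of your argument (kernel $\cong\mathbb{Z}^n$, trivial intersection with the section, the action of $H_{2n}$ on the $\omega_i=t_{2i-1}t_{2i}^{-1}$ by signed permutations of the caps, consistent with the relation $\theta_i\omega_i=\omega_i^{-1}\theta_i$) goes through and agrees with the paper.
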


\begin{proof}
    Let $π:RB_{2n} \to B_{2n}$ be the natural projection that forgets the framing. Define elements in $RB_{2n}$ so that

\begin{align*}
    p_i &= σ_{2i}σ_{2i-1}σ_{2i+1}^{-1}σ_{2i}^{-1} \quad \textrm{for} \quad 1 \le i \le n-1\\
    s_j &= σ_{2j}σ_{2j-1}σ_{2j+1}σ_{2j} \quad \textrm{for} \quad 1 \le j \le n-1\\
    θ_k &= t_{2k-1}σ_{2k-1} \quad \quad \quad \quad \textrm{for} \quad 1 \le k \le n\\
    ω_λ &= t_{2λ-1}t_{2λ}^{-1} \quad \quad \quad \quad \quad \textrm{for} \quad 1 \le λ \le n
\end{align*} where $σ_i,t_j$ are the generators of $RB_{2n}$ (see Figs. \ref{fig:Hilden_generators} and \ref{fig:new_generators}).\\
\\
\indent Let $Κ_{2n} = <p_1,\ldots,p_{n-1},s_1,\ldots,s_{n-1},θ_1,\ldots,θ_n>$. The generators of $Κ_{2n}$ satisfy Tawn's relations in the presentation of $H_{2n}$ so there is a well defined group homomorphism $φ:H_{2n} \to Κ_{2n}$, that is mutually inverse with the restriction of $π$ to $Κ_{2n}$. Therefore $Κ_{2n} \cong H_{2n}$ and we can think of $Η_{2n}$ as a subgroup of $RB_{2n}$ identified with $Κ_{2n}$. We now take the restriction of $π$ to $RH_{2n}$. It's easy to see that $π(RH_{2n})$ is in $H_{2n}$. Indeed, let $b$ be a framed braid that stabilizes $A^*_n$, i.e. the diagram $A^*_nb=A^*_n$. Then contracting the diagrams $A^*_nb, A^*_n$ to the center-lines of the ribbons we have $a^*_nπ(b)=a^*_n$, therefore $π(b) \in H_{2n}$. It is clear that $π|_{RH_{2n}}$ is onto. We only need to find the kernel of $π|_{RH_{2n}}$. Let $b \in RH_{2n}$, so that $π(b)=1$. Because $b$ is a framed braid then $b=t_1^{λ_1}\ldots t_{2n}^{λ_{2n}}β$, with $β \in B_{2n}$. Because $π(b)=β=1$, this means that $b=t_1^{λ_1} \ldots t_{2n}^{λ_{2n}}$. However, since $b \in RH_{2n}$ this means that $A^*_nt_1^{λ_1} \ldots t_{2n}^{λ_{2n}}=A^*_n$ and thus $λ_{2i-1}+λ_{2i}=0$ for all $i=1,\ldots,n$. This proves that the kernel of $π|_{RH_{2n}}$ is $<ω_1,\ldots,ω_{n}> \cong \mathbb{Z}^n$.
 
    Take an element $x \in \textrm{ker}\, π|_{RH_{2n}} \cap Κ_{2n}$, then $x=t_1^{λ_1}t_2^{-λ_1}\ldots t_{2n-1}^{λ_n}t_{2n}^{-λ_n} \in Κ_{2n}$. However, since $π|_{K_{2n}}$ is an isomorphism and $π(x)=1$, then $x=1$. Let now $x \in RH_{2n}$, then $x \, φ \circ π(x)^{-1} \in \textrm{ker}π|_{RH_{2n}}$, so that $x = t_1^{λ_1}t_2^{-λ_1}\ldots t_{2n-1}^{λ_n}t_{2n}^{-λ_n}a$, where $a \in K_{2n}$. This proves that $RH_{2n} = \textrm{ker}\, π|_{RH_{2n}} \rtimes K_{2n}  \cong \mathbb{Z}^n \rtimes_ψ H_{2n}$, where $ψ:H_{2n} \to \textrm{Aut}(\mathbb{Z}^n)$ induced by the action of the braid generators $σ_i$ on the framing generators $t_j$.
\end{proof}

\begin{figure}[htp]
    \centering
    \includegraphics[width=6cm]{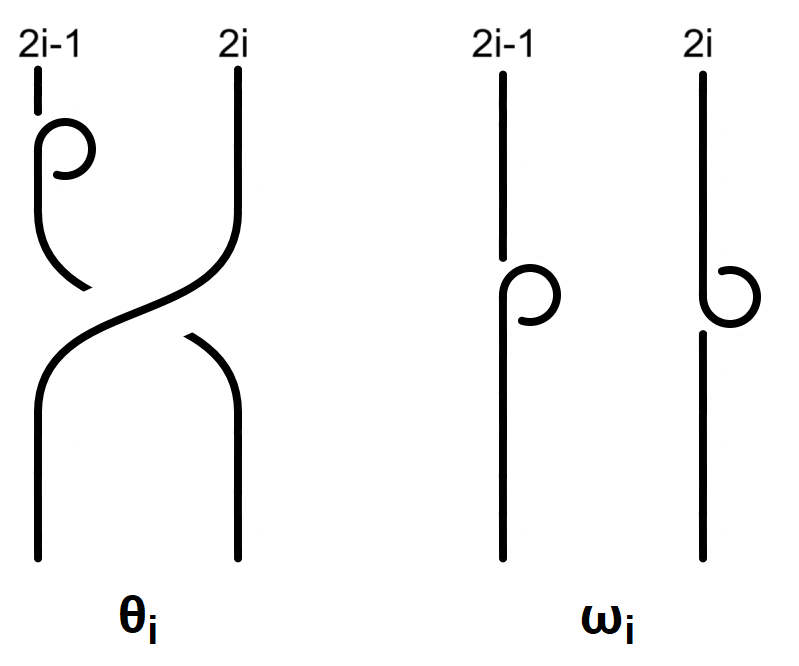}
    \caption{The elements $θ_i$ and $ω_i$.}
    \label{fig:new_generators}
\end{figure} 

\begin{corollary}
    The Framed Hilden group $RH_{2n}$ has a presentation with generators:
    \begin{align*} 
p_i &\quad \textrm{for} \quad 1 \le i \le n-1\\
s_j &\quad \textrm{for} \quad 1 \le j \le n-1\\
θ_k &\quad \textrm{for} \quad 1 \le k \le n\\
ω_λ &\quad \textrm{for} \quad 1 \le λ \le n
\end{align*} and relations (\ref{Hilden relations1}) for $p_i,s_j,θ_k$, together with the following relations:
\begin{align*}
     p_jω_i &= ω_{i+1}p_j\phantom{T}  \quad \textrm{for} \quad i=j \\
     p_jω_i &= ω_{i-1}p_j\phantom{T}  \quad \textrm{for} \quad i=j+1\\
     p_jω_i &= ω_ip_j\phantom{T} \quad \quad \textrm{for} \quad i \neq j,j+1 \\
     s_jω_i &= ω_{i+1}s_j\phantom{T} \quad \textrm{for} \quad i=j \\
     s_jω_i &= ω_{i-1}s_j\phantom{T} \quad \textrm{for} \quad i=j+1 \\
     s_jω_i &= ω_is_j\phantom{T} \quad \quad \textrm{for} \quad i \neq j,j+1 \\
     θ_jω_i &= ω_i^{-1}θ_j \quad \quad \textrm{for} \quad i=j \\
     θ_jω_i &= ω_iθ_j\phantom{T} \quad \quad \textrm{for} \quad i \neq j \\
     ω_iω_j &= ω_jω_i\phantom{T} \quad \quad \textrm{for} \quad 1 \le i,j \le n 
\end{align*}

\end{corollary}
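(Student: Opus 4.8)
The plan is to derive the presentation directly from the structural result of Proposition~\ref{FrHilGr}, namely $RH_{2n} \cong \mathbb{Z}^n \rtimes_\psi H_{2n}$, by applying the standard recipe for presenting a semidirect product: if $N = \langle Y \mid S \rangle$ and $G = \langle X \mid R \rangle$ and $G$ acts on $N$ through $\psi \colon G \to \operatorname{Aut}(N)$, then $N \rtimes_\psi G$ is presented by $\langle X, Y \mid R,\ S,\ x y x^{-1} = \psi(x)(y)\ \text{for}\ x \in X,\ y \in Y \rangle$. Here I would take $N = \mathbb{Z}^n$, freely generated by $\omega_1, \dots, \omega_n$ (which, by the proof of Proposition~\ref{FrHilGr}, is exactly $\ker \pi|_{RH_{2n}}$), with the sole family of relations $\omega_i \omega_j = \omega_j \omega_i$; and $G = H_{2n}$, identified with $K_{2n} = \langle p_i, s_j, \theta_k \rangle$ carrying Tawn's presentation, i.e.\ relations~(\ref{Hilden relations1}) on $p_i, s_j, \theta_k$. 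Once these are plugged in, the only thing left is to compute the conjugation action $\psi$ of each generator $p_i, s_j, \theta_k$ on each $\omega_\lambda$.

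This remaining step is a short computation inside $RB_{2n}$. The key point is that the fourth family of relations of $RB_n$ gives $\sigma_i t_j \sigma_i^{-1} = t_{s_i(j)}$, so conjugation by any word in the classical generators permutes the framing generators according to its underlying permutation in $S_{2n}$; and since the $t_j$'s commute, this still holds for $\theta_k = t_{2k-1}\sigma_{2k-1}$, the leading $t_{2k-1}$ being inert. So I would first record that $p_i$ and $s_i$ both have underlying permutation $(2i-1\ \ 2i+1)(2i\ \ 2i+2)$, while $\theta_k$ has underlying permutation $(2k-1\ \ 2k)$; then, substituting into $\omega_\lambda = t_{2\lambda-1} t_{2\lambda}^{-1}$, read off $p_i \omega_i p_i^{-1} = \omega_{i+1}$, $p_i \omega_{i+1} p_i^{-1} = \omega_i$, and $p_i \omega_\lambda p_i^{-1} = \omega_\lambda$ for $\lambda \neq i, i+1$, the same formulas with $s_i$ replacing $p_i$, and $\theta_k \omega_k \theta_k^{-1} = \omega_k^{-1}$ with $\theta_k \omega_\lambda \theta_k^{-1} = \omega_\lambda$ for $\lambda \neq k$. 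These are precisely the extra relations in the statement, so the recipe yields exactly the claimed presentation.

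I do not expect a real obstacle; the only care needed is bookkeeping. The one genuinely framed-specific subtlety is the behaviour of $\theta_k$: it is an honest framed braid, and one must check explicitly that its framing factor $t_{2k-1}$ acts trivially under conjugation on the $t_j$'s --- which it does, because $\langle t_1, \dots, t_{2n} \rangle$ is abelian --- so that $\theta_k$ acts on $\mathbb{Z}^n$ only through the transposition $(2k-1\ \ 2k)$. Because $\omega_k$ is built from $t_{2k-1}$ and $t_{2k}$ with opposite exponents, this transposition produces the inversion $\omega_k \mapsto \omega_k^{-1}$ rather than a permutation of coordinates, which explains the shape of the $\theta_k\omega_k$-relation. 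A final routine verification is that all index shifts appearing in the listed relations stay in range (for instance $p_{n-1}\omega_{n-1} = \omega_n p_{n-1}$ and $\theta_n\omega_n = \omega_n^{-1}\theta_n$ are all admissible), which confirms that the list of extra relations is complete and correct.
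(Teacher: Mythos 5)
Your proposal is correct and follows the same route as the paper, which deduces the presentation directly from Proposition \ref{FrHilGr} together with the standard presentation of a semidirect product; your explicit computation of the action of $p_i, s_j, θ_k$ on the $ω_λ$ (via the underlying permutations and the relation $σ_i t_j σ_i^{-1} = t_{s_i(j)}$) just spells out the bookkeeping the paper leaves implicit.
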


\begin{proof}
    This is an immediate consequence of Proposition \ref{FrHilGr} and the presentation of the semi-direct product of two groups.
\end{proof}

\begin{figure}[htp]
    \centering
    \includegraphics[width=8cm]{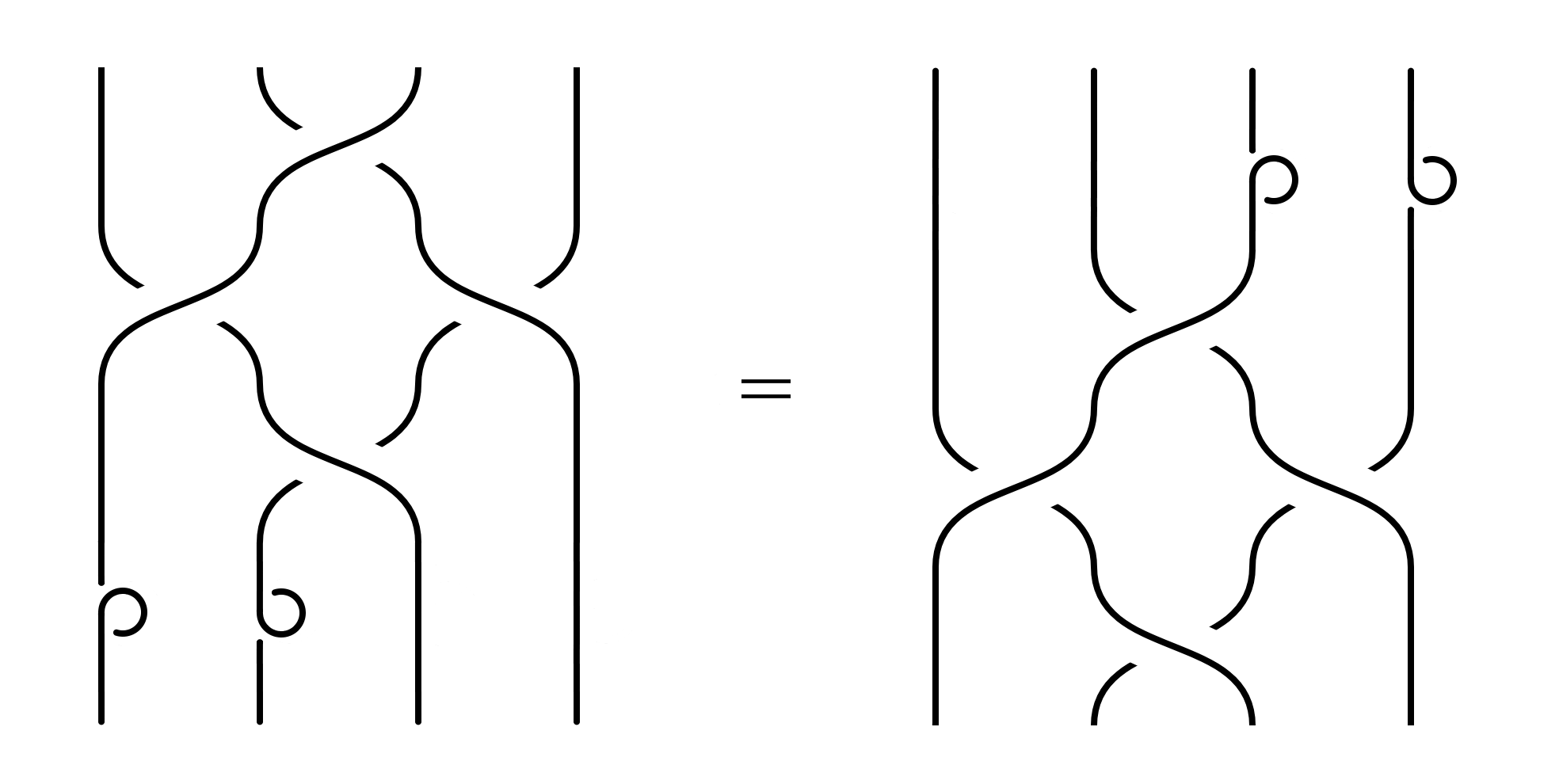}
    \caption{Relation $p_iω_i = ω_{i+1}p_i$.}
    \label{fig:1st_rel}
\end{figure} 

\begin{figure}[htp]
    \centering
    \includegraphics[width=8cm]{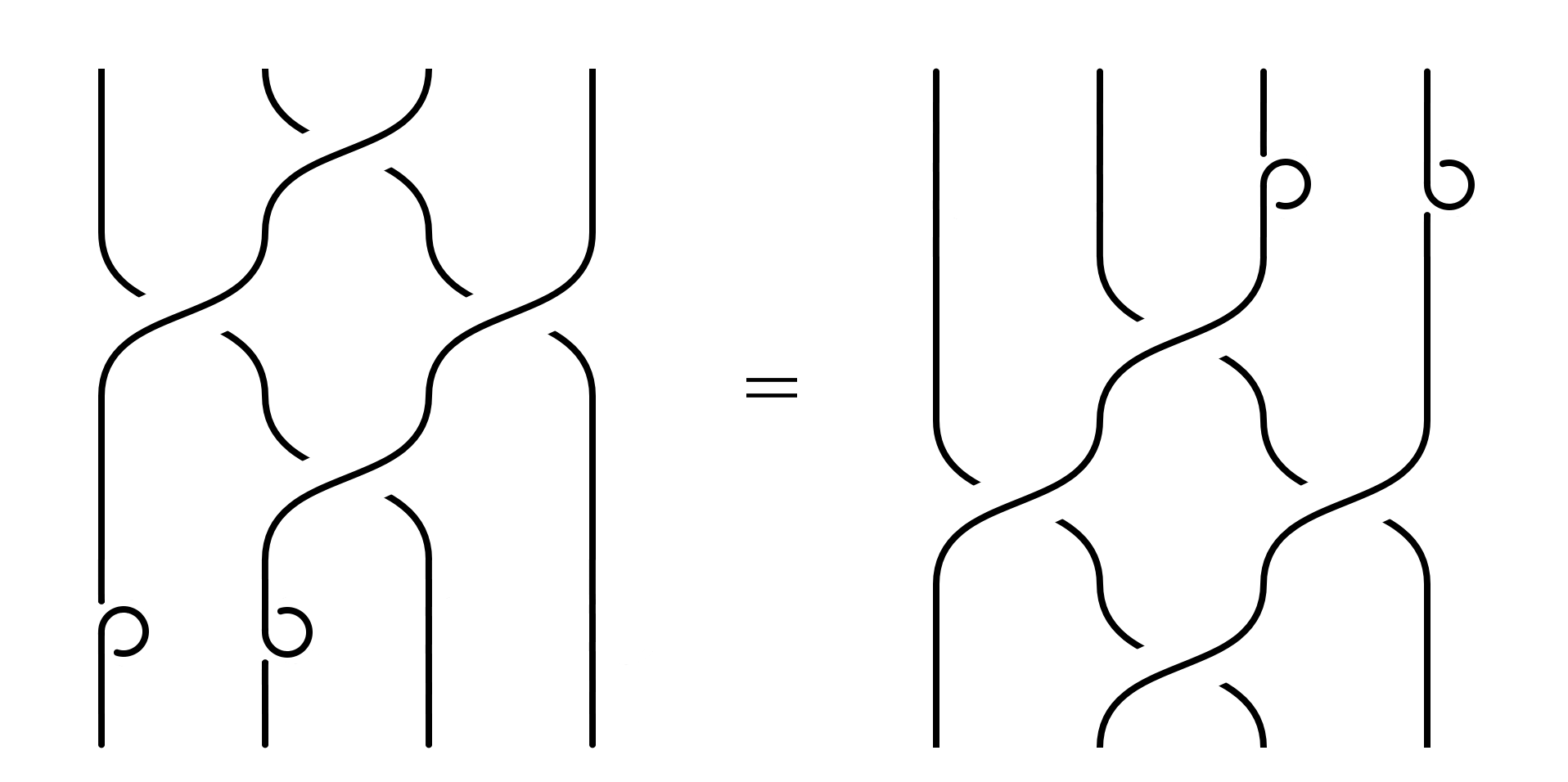}
    \caption{Relation $s_iω_i = ω_{i+1}s_i$.}
    \label{fig:2nd_rel}
\end{figure} 

\begin{figure}[htp]
    \centering
    \includegraphics[width=4.5cm]{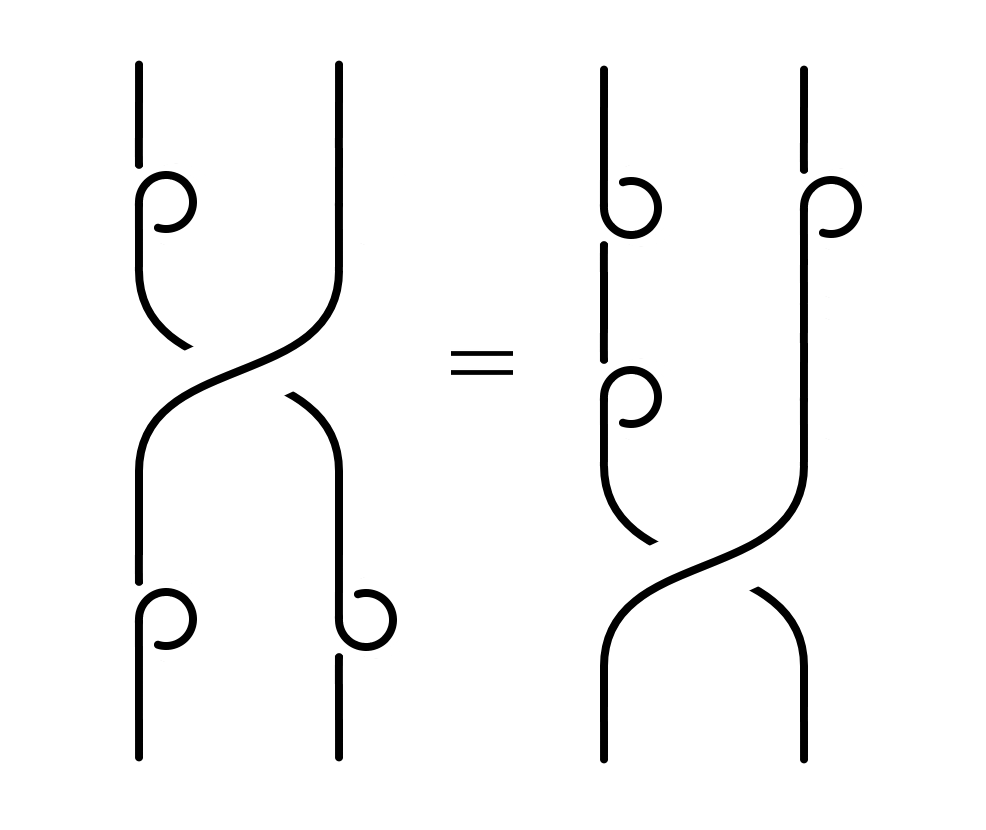}
    \caption{Relation $θ_iω_i = ω_i^{-1}θ_i$.}
    \label{fig:3rd_rel}
\end{figure} 

\begin{figure}[htp]
    \centering
    \includegraphics[width=8cm]{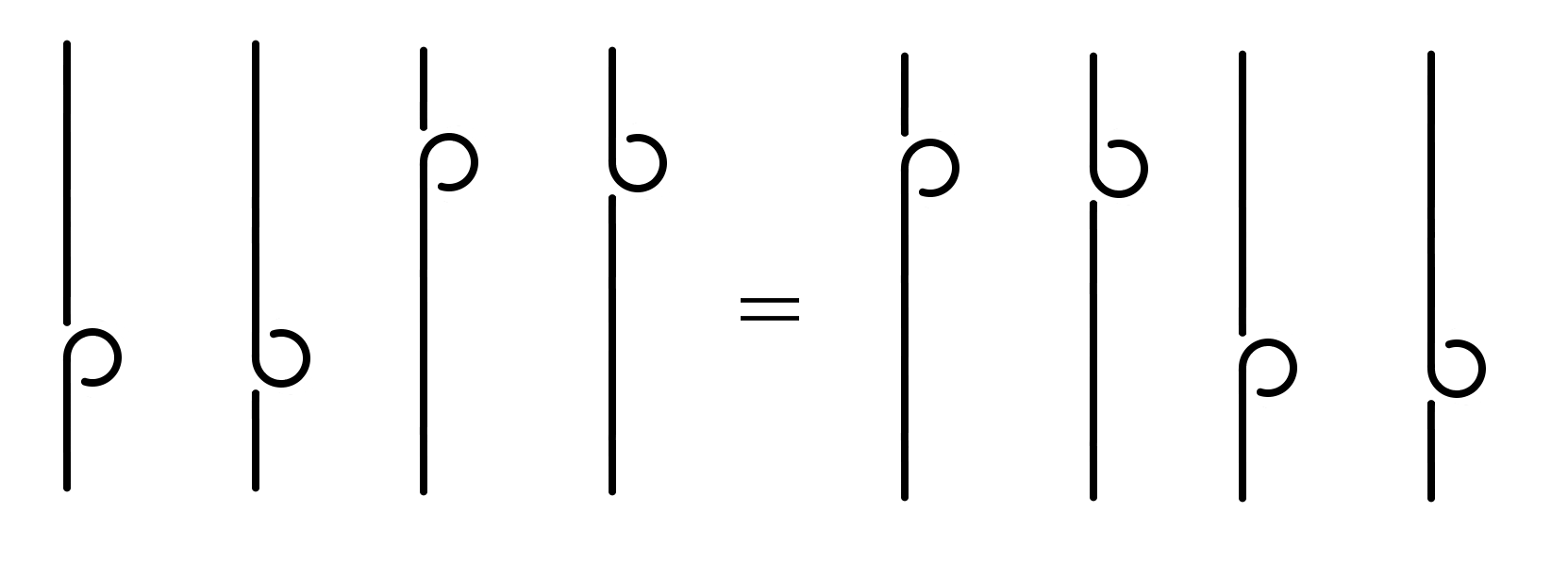}
    \caption{Relation $ω_{i+1}ω_i = ω_iω_{i+1}$.}
    \label{fig:4th_rel}
\end{figure}

\noindent For a depiction of some of the framed Hilden group relations see Figs. \ref{fig:1st_rel}, \ref{fig:2nd_rel}, \ref{fig:3rd_rel} and \ref{fig:4th_rel}.

\subsection{The pure framed Hilden group}

In \cite{tawn2009presentation} Tawn has also provided a presentation for the pure Hilden group $PH_{2n} = H_{2n} \cap P_{2n}$, where $P_{2n}$ is the pure braid group on $2n$ strands. Namely, Tawn showed that the pure Hilden group is generated by the elements depicted in Fig.~\ref{purediagram} where $1 \le i < j \le n$. Using the same method as in the proof of Proposition~\ref{FrHilGr} we can show that $PRH_{2n} = RH_{2n} \cap RP_{2n}$, the pure framed Hilden group, is isomorphic to $\mathbb{Z}^n \oplus PH_{2n}$.

\begin{figure}[htp]
    \centering
    \includegraphics[width=13cm]{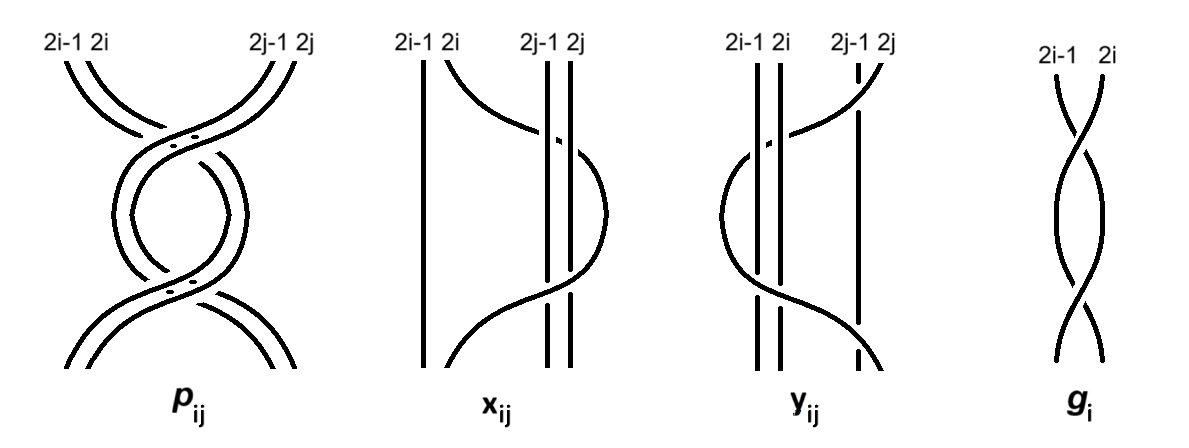}
    \caption{The generators of $PH_{2n}$.}
    \label{purediagram}
\end{figure}

\begin{proposition}
\label{PureFrHilGr}
   The pure framed Hilden group $PRH_{2n}$ is isomorphic to the direct sum $\mathbb{Z}^n \oplus PH_{2n}$.
\end{proposition}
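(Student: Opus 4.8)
The plan is to run the argument of Proposition~\ref{FrHilGr} with $H_{2n}$ replaced by $PH_{2n}$, and then to observe that, because pure braids carry the trivial permutation, the semi-direct product that appears is in fact a direct sum. Throughout I would use the natural projection $\pi:RB_{2n}\to B_{2n}$ forgetting the framing, together with the section $\varphi:H_{2n}\to K_{2n}\subseteq RH_{2n}$ constructed in the proof of Proposition~\ref{FrHilGr} (where $K_{2n}=\langle p_1,\dots,p_{n-1},s_1,\dots,s_{n-1},\theta_1,\dots,\theta_n\rangle$ and $\pi|_{K_{2n}}$ is mutually inverse to $\varphi$).

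First I would restrict $\pi$ to $PRH_{2n}=RH_{2n}\cap RP_{2n}$ and check that its image is $PH_{2n}$. If $b\in PRH_{2n}$ then $b$ stabilizes $A^*_n$ and is a pure framed braid; contracting every ribbon to its core curve shows $\pi(b)$ stabilizes $a^*_n$, hence $\pi(b)\in H_{2n}$, and $\pi(b)$ is visibly a pure braid, so $\pi(b)\in H_{2n}\cap P_{2n}=PH_{2n}$. Conversely, for $h\in PH_{2n}$ the element $\varphi(h)$ always lies in $K_{2n}\subseteq RH_{2n}$, and its underlying permutation coincides with that of $\pi(\varphi(h))=h$, hence is trivial, so $\varphi(h)$ is a pure framed braid and $\varphi(h)\in PRH_{2n}$. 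Thus $\pi|_{PRH_{2n}}:PRH_{2n}\to PH_{2n}$ is surjective with section $\varphi|_{PH_{2n}}$.

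Next I would compute the kernel of $\pi|_{PRH_{2n}}$. If $b\in PRH_{2n}$ with $\pi(b)=1$, then $b=t_1^{\lambda_1}\cdots t_{2n}^{\lambda_{2n}}$, and the stabilization condition $A^*_n\,b=A^*_n$ forces $\lambda_{2i-1}+\lambda_{2i}=0$ for all $i$, exactly as in Proposition~\ref{FrHilGr}: a full twist on the $(2i-1)$-st strand slides across the $i$-th cap arc of $A^*_n$ and is cancelled by an opposite twist on the $2i$-th strand, and this is the only relation among the $\lambda_j$. Hence $\ker(\pi|_{PRH_{2n}})=\langle\omega_1,\dots,\omega_n\rangle\cong\mathbb{Z}^n$, with $\omega_\lambda=t_{2\lambda-1}t_{2\lambda}^{-1}$. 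Combined with the section, this gives $PRH_{2n}=\langle\omega_1,\dots,\omega_n\rangle\rtimes_\psi\varphi(PH_{2n})$.

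Finally I would show $\psi$ is trivial, which upgrades the semi-direct product to a direct sum. By the fourth family of relations in the presentation of $RB_{2n}$, conjugation by a braid $\beta$ sends $t_j$ to $t_{\bar\beta(j)}$, where $\bar\beta$ is the underlying permutation of $\beta$; since every element of $\varphi(PH_{2n})$ projects to a pure braid, conjugation by it fixes each $t_j$, hence each $\omega_\lambda$, and (as the $t_j$ also commute among themselves) $\langle\omega_1,\dots,\omega_n\rangle$ is central in $PRH_{2n}$. Therefore $PRH_{2n}\cong\mathbb{Z}^n\oplus PH_{2n}$. I expect the only delicate point to be the kernel computation — the geometric claim that $A^*_n\,t_1^{\lambda_1}\cdots t_{2n}^{\lambda_{2n}}=A^*_n$ holds exactly when $\lambda_{2i-1}+\lambda_{2i}=0$ for every $i$ — but this is already established in Proposition~\ref{FrHilGr} and transfers verbatim, so the remaining work is bookkeeping with the semi-direct/direct product formalism.
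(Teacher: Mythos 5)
Your proof is correct, and it arrives at the same structural decomposition as the paper — the kernel $\langle\omega_1,\dots,\omega_n\rangle\cong\mathbb{Z}^n$ of $\pi|_{PRH_{2n}}$ together with a complement mapping isomorphically onto $PH_{2n}$ — but you build the complement by a different (and leaner) route. The paper constructs a fresh section by lifting Tawn's generators of $PH_{2n}$ to explicit framed elements (zero-framing lifts $p_{ij},x_{ij},y_{ij}$ and $g_k=t_{2k-1}t_{2k}\sigma_{2k-1}^2$), checks that these satisfy the relations of \cite{tawn2009presentation} to get the isomorphism $PH_{2n}\cong\Delta_{2n}$, and then concludes directness from normality of $\Delta_{2n}$; you instead restrict the section $\varphi:H_{2n}\to K_{2n}$ already built in Proposition~\ref{FrHilGr}, noting that purity is detected by the underlying permutation so that $\varphi(PH_{2n})\subseteq RH_{2n}\cap RP_{2n}=PRH_{2n}$, and you make the directness explicit: since elements of $\varphi(PH_{2n})$ project to pure braids, the relation $\sigma_it_j=t_{s_i(j)}\sigma_i$ shows conjugation by them fixes every $t_j$, hence every $\omega_\lambda$, so the action $\psi$ is trivial and the semi-direct product is a direct sum. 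What each buys: your argument needs no verification of Tawn's pure relations at all to obtain the isomorphism, and it states rather than leaves implicit the reason the extension splits as a direct product; the paper's explicit subgroup $\Delta_{2n}$, on the other hand, is exactly what feeds into the subsequent corollary giving the presentation of $PRH_{2n}$, so its extra work is reused there. As a consistency check between the two sections, note $\varphi(\Theta_k^2)=\theta_k^2=(t_{2k-1}\sigma_{2k-1})^2=t_{2k-1}t_{2k}\sigma_{2k-1}^2=g_k$, so they agree on these generators; the kernel computation and the remaining bookkeeping are identical in both arguments.
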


\begin{proof}
    Define elements $p_{ij}, x_{ij}, y_{ij}, \quad \textrm{for} \quad 1 \le i < j \le n$ in $RP_{2n}$, so that they correspond to the diagrams in Fig. \ref{purediagram}. Note that any ribbons between ribbon number $2i$ and ribbon number $2j-1$ are vertical ribbons that pass entirely under the ribbons depicted in the diagram. Define also elements $g_k = t_{2k-1}t_{2k}σ_{2k-1}^2, \quad \textrm{for} \quad 1 \le k \le n $, and take further the elements $ ω_λ = t_{2λ-1}t_{2λ}^{-1},  \quad \textrm{for} \quad 1 \le λ \le n $, as in Proposition \ref{FrHilGr}. These elements satisfy Tawn's relations in \cite{tawn2009presentation} for $PH_{2n}$ (see relations below not involving the $\omega_i$'s), so there is a well-defined isomorphism $φ: PH_{2n} \to Δ_{2n}:= < p_{ij}, x_{ij}, y_{ij}, g_k   \ | \ 1 \le i < j \le n, \  1 \le k \le n>$, with $π|_{PRH_{2n}}$ as its mutual inverse. Then $\textrm{ker}\, π|_{PRH_{2n}} = <ω_1,\ldots, ω_1>$. The same arguments as in the proof of Proposition~\ref{FrHilGr} show that $PRH_{2n} = \textrm{ker}\, π|_{PRH_{2n}} \oplus Δ_{2n} \cong \mathbb{Z}^n \oplus PH_{2n}$ since $Δ_{2n}$ is normal in $PRH_{2n}$.
\end{proof}

\begin{corollary}
    The Pure Framed Hilden group $PRH_{2n}$ has a presentation with generators:\begin{align*}
    p_{ij} &\quad \textrm{for} \quad 1 \le i < j \le n\\
    x_{ij} &\quad \textrm{for} \quad 1 \le i < j \le n\\
    y_{ij} &\quad \textrm{for} \quad 1 \le i < j \le n\\
    g_k  &\quad \textrm{for} \quad 1 \le k \le n\\
    ω_λ  &\quad \textrm{for} \quad 1 \le λ \le n   
\end{align*}    
    where $g_k,ω_λ$ as defined in the proof of Proposition \ref{PureFrHilGr} and $p_{ij},x_{ij},y_{ij}$ as in Fig. \ref{purediagram}  and relations: \begin{align*}
        p_{ij}g_k &= g_kp_{ij}\\
        g_ig_j &= g_jg_i\\
        x_{ij}g_k &= g_kx_{ij} \quad \quad \quad \quad i<j \quad k \neq i\\
        y_{ij}g_k &= g_ky_{ij} \quad \quad \quad \quad i<j \quad k \neq j
    \end{align*} \begin{align*}
        α_{ij}β_{kl} &= β_{kl}α_{ij} \quad \quad \quad \quad \quad \quad α,β \in \{p,x,y\} \quad \textrm{and} \quad (i,j,k,l) \quad \textrm{cyclically ordered}\\
        α_{ij}β_{ik}γ_{jk} &= β_{ik}γ_{jk}α_{ij} \quad \quad \quad \quad (i,j,k) \quad \textrm{cyclically ordered} \quad \textrm{and} \quad (α,β,γ) \quad \textrm{as in Table \ref{Table}}\\
    α_{ik}p_{jk}β_{jl}p_{jk}^{]-1} &= p_{jk}β_{jl}p_{jk}^{]-1}α_{ik} \quad \quad \quad α,β \in \{p,x,y\} \quad \textrm{and} \quad (i,j,k,l) \quad \textrm{cyclically ordered}
    \end{align*} \begin{align*}
    x_{ij}p_{ij}g_i &= p_{ij}g_ix_{ij} \quad \quad \quad \quad i<j\\
    y_{ij}p_{ij}g_i &= p_{ij}g_iy_{ij} \quad \quad \quad \quad i<j\\
    ω_iω_j &= ω_jω_i\phantom{t} \quad \quad \textrm{for} \quad 1 \le i,j \le n\\
    ω_ip_{kj} &= p_{kj}ω_i \quad \quad \textrm{for} \quad 1 \le i,j,k \le n\\
    ω_ix_{kj} &= x_{kj}ω_i \quad \quad \textrm{for} \quad 1 \le i,j,k \le n\\
    ω_iy_{kj} &= y_{kj}ω_i \quad \quad \textrm{for} \quad 1 \le i,j,k \le n
    \end{align*}
    
\end{corollary}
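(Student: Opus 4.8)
The argument is almost entirely formal: it reduces to Proposition~\ref{PureFrHilGr} together with the standard recipe for a presentation of a direct product. Since Proposition~\ref{PureFrHilGr} gives an \emph{internal} direct-product decomposition $PRH_{2n}=\langle\,\omega_1,\dots,\omega_n\,\rangle\oplus\Delta_{2n}\cong\mathbb{Z}^n\oplus PH_{2n}$, it suffices to assemble a presentation from three ingredients: (i) a presentation of $PH_{2n}$ (equivalently of $\Delta_{2n}$), (ii) a presentation of $\mathbb{Z}^n$, and (iii) the relations forcing the two factors to centralise one another.

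In detail, the plan is the following. \textbf{Step 1.} Quote Tawn's presentation of the pure Hilden group from \cite{tawn2009presentation} and transport it along the isomorphism $\varphi\colon PH_{2n}\to\Delta_{2n}$ built in the proof of Proposition~\ref{PureFrHilGr}, so that Tawn's generators become $p_{ij},x_{ij},y_{ij}$ $(1\le i<j\le n)$ and the local twists $g_k=\sigma_{2k-1}^2$ $(1\le k\le n)$; after this translation Tawn's relations are precisely the relations in the statement that do not involve the $\omega_\lambda$ (the $g$-relations $p_{ij}g_k=g_kp_{ij}$, $g_ig_j=g_jg_i$, $x_{ij}g_k=g_kx_{ij}$ for $k\neq i$, $y_{ij}g_k=g_ky_{ij}$ for $k\neq j$; the pure-braid-type commutation and triangle relations among $p_{ij},x_{ij},y_{ij}$; and the relations $x_{ij}p_{ij}g_i=p_{ij}g_ix_{ij}$, $y_{ij}p_{ij}g_i=p_{ij}g_iy_{ij}$). \textbf{Step 2.} Use the presentation $\langle\,\omega_1,\dots,\omega_n\mid \omega_i\omega_j=\omega_j\omega_i\,\rangle$ of the second factor, which is legitimate because the proof of Proposition~\ref{PureFrHilGr} identifies $\ker(\pi|_{PRH_{2n}})$ with the free abelian group on $\omega_1,\dots,\omega_n$. \textbf{Step 3.} Adjoin the mixed relations $\omega_i p_{kj}=p_{kj}\omega_i$, $\omega_i x_{kj}=x_{kj}\omega_i$, $\omega_i y_{kj}=y_{kj}\omega_i$ and likewise $\omega_i g_k=g_k\omega_i$ (this last holds in $RP_{2n}$ because $\sigma_{2k-1}^2$ centralises $t_{2k-1}$ and $t_{2k}$ by the relation $\sigma_i t_j = t_{s_i(j)}\sigma_i$, and may be taken as part of the list). \textbf{Step 4.} Invoke the standard fact that for groups $A=\langle\,X\mid R\,\rangle$, $B=\langle\,Y\mid S\,\rangle$ one has $A\oplus B=\langle\,X\sqcup Y\mid R,\ S,\ xy=yx\ \text{for}\ x\in X,\ y\in Y\,\rangle$; applied with $A=\Delta_{2n}\cong PH_{2n}$ and $B=\mathbb{Z}^n$ this yields exactly the presentation claimed in the statement.

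The only genuine work is verifying, on the geometric side, the centralising relations of Step 3 — but these are immediate from the defining relations of $RB_{2n}$, since each $\omega_\lambda$ is a word in the mutually commuting framing generators $t_j$, and each of $p_{ij},x_{ij},y_{ij},g_k$ either obviously commutes with $\omega_\lambda$ or is, up to framing, supported on bands disjoint from $t_{2\lambda-1},t_{2\lambda}$. The main conceptual point, the internal direct-product splitting itself, is already supplied by Proposition~\ref{PureFrHilGr}; I therefore expect no real obstacle, and the proof can be presented as a short reduction to that proposition and the direct-product presentation lemma, with \cite{tawn2009presentation} cited for the relation list of $PH_{2n}$.
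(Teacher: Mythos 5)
Your proposal follows exactly the paper's route: the paper's own proof is a one-line reduction to Proposition \ref{PureFrHilGr} together with the standard presentation of a direct sum of two presented groups, which is precisely your Steps 1--4 (transport Tawn's presentation along $\varphi$, present the kernel $\langle \omega_1,\ldots,\omega_n\rangle\cong\mathbb{Z}^n$, adjoin the mixed commutation relations). One detail needs correcting: the image of Tawn's twist generator under $\varphi$ is $g_k=t_{2k-1}t_{2k}\sigma_{2k-1}^2$, not $\sigma_{2k-1}^2$. The framing factors are not decoration: the bare full twist $\sigma_{2k-1}^2$ changes the blackboard framing of the band of $A^*_n$ capping the strands $2k-1,2k$, so it does not stabilize the untwisted ribbon tangle and does not lie in $RH_{2n}$ at all; compensating for this is exactly why $g_k$ (like $\theta_k=t_{2k-1}\sigma_{2k-1}$ in Proposition \ref{FrHilGr}) carries the $t$'s. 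With that fixed, your Step 3 verification that each $\omega_\lambda$ commutes with $g_k$ (via $\sigma_i t_j=t_{s_i(j)}\sigma_i$) is correct, and your observation is a genuine improvement in care: the relation $\omega_i g_k=g_k\omega_i$ is needed for the direct-sum presentation to be complete, even though it is not written explicitly in the corollary's relation list, so it should indeed be taken as part of the list as you propose.
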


\begin{table}[!h]
\begin{center}
\begin{tabular}{ m{5em} | m{5.5cm} } 
  $i < j < k$ & $(p,p,p) \quad (p,y,y) \quad (x,p,p) \quad (x,x,p)$  \\
              & $(x,y,y) \quad (y,p,p) \quad (y,p,x) \quad (y,y,y)$  \\
  \hline
  $j < k < i$  & $(p,p,p) \quad (p,x,y) \quad (x,p,p) \quad (x,p,x)$  \\
               & $(x,x,y) \quad (y,p,p) \quad (y,x,y) \quad (y,y,p)$  \\ 
  \hline
  $k < i < j$  & $(p,p,p) \quad (p,x,x) \quad (x,p,p) \quad (x,x,x)$  \\
               & $(x,y,p) \quad (y,p,p) \quad (y,p,y) \quad (y,x,x)$  \\              
\end{tabular}
\caption{\label{demo-table}The values of $(α,β,γ)$ from \cite{tawn2009presentation}.}
\label{Table}
\end{center}
\end{table}

\begin{proof}
    Immediate consequence of Proposition \ref{PureFrHilGr} and the presentation of the direct sum of two groups.
\end{proof}


\section{Plat Closure of Framed Braids and Framed Plat Equivalence}
\label{sec:framed_birman}

The aim of this section is to give an exact analogue of the Birman plat equivalence theorem for the plat closure of framed braids. Let $A^{*-}_n$ be the mirror image of the element $A^{*}_n$ with respect to the $xy$-plane. Notice that $A^*_n$ and $A^{*-}_n$ are the $(0,2n)$ and $(2n,0)$ untwisted ribbon tangles whose contracted tangle diagrams correspond to $a^*_n$ and $a^{*-}_n$ respectively. As in the case of the framed Markov theorem, we shall need to modify the stabilization move in order to respect the framing, since some elements of the Hilden groups  do not respect the framing. So, we need to make use of the framed Hilden groups $RH_{2n}$.

\begin{definition}
    The \textit{plat closure of a framed braid} $b \in RB_{2n}$ is defined to be the closed diagram $A^*_nbA^{*-}_n$.
\end{definition}

Just like in the case of classical links, given a framed link $L$ and a diagram of $L$ in standard closure form of a framed braid, we can recover a framed plat representation of $L$ simply by dragging the untwisted closure ribbon arcs, as described above Theorem \ref{th:Birman}. We then have the following:

\begin{proposition}
    Every framed link can be regarded as the plat closure of some framed braid with even number of ribbons.
\end{proposition}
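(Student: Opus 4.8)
The plan is to reduce the framed statement to the classical one (the Hilden--Birman theorem, recalled above Theorem~\ref{th:Birman}) by tracking what the framing does under the standard dragging procedure. First I would invoke the Framed Alexander theorem (Theorem~\ref{Framed Markov}'s companion): every oriented framed link $L$ is isotopic to the standard closure $\hat b$ of some framed braid $b \in RB_n$. Writing $b = t_1^{\lambda_1}\cdots t_n^{\lambda_n}\beta$ with $\beta \in B_n$ via the fourth family of relations in $RB_n$, I would record that the framings on the ribbons can be collected at the top of the braid, so that the ``twisting data'' and the ``braiding data'' are cleanly separated.

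Next I would run the classical dragging algorithm described just above Theorem~\ref{th:Birman}: starting from the standard closure diagram, drag the outermost upward-returning closure arc successively between strands $1$ and $2$, then $2$ and $3$, and so on, inductively producing a braid $\beta_0 \in B_{2n}$ with $a^*_n\,\beta_0\,a^{*-}_n$ isotopic (as an unframed link diagram) to $\hat\beta$. The key observation is that this dragging is a sequence of planar isotopies and Reidemeister II/III moves on the closure arcs together with a rerouting that, performed on \emph{ribbons} rather than strands, introduces no new twists: the closure ribbons of the framed braid are untwisted and unlinked by definition, and sliding an untwisted ribbon over or under a braid is a framed isotopy. Hence the same procedure applied to the ribbon diagram of $b$ yields a framed braid $b_0 \in RB_{2n}$ whose contracted core diagram is $\beta_0$, and whose plat closure $A^*_n\,b_0\,A^{*-}_n$ is framed-isotopic to $\hat b$, hence to $L$. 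Concretely, $b_0$ will have the form $t_1^{\mu_1}\cdots t_{2n}^{\mu_{2n}}\beta_0$ where the exponents $\mu_i$ come from redistributing the original $\lambda_j$ (possibly using conjugations within a link component to move framing generators between strands of the same component, exactly as in the discussion preceding the Framed Alexander theorem) so that the total framing on each component is preserved.

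The main obstacle, and the point that needs genuine care rather than a one-line appeal, is the bookkeeping of the framing during the drag: one must check that (i) rerouting a closure ribbon never forces a half-twist (our ribbons carry only full twists, so one must verify the moves stay within the allowed class), and (ii) the algebraic exponents $\mu_i$ can indeed be chosen so that $A^*_n b_0 A^{*-}_n$ has the prescribed framing vector on each component. Point (ii) is a small linear-algebra argument of the same flavor as the one in the proof of Theorem~\ref{framed L-Markov}: break the system into the permutation cycles of the component structure and solve for integer exponents, which is always possible since framing is a connected-component invariant and the $t_i$'s generate a free abelian group. Once these two checks are in place the proposition follows, since the rest is exactly the classical Hilden--Birman dragging argument applied verbatim to ribbon diagrams; no framed analogue of Birman's theorem is needed for this existence statement, only the framed Alexander theorem and the compatibility of the drag with framed isotopy.
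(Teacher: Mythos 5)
Your argument is essentially the paper's: the paper justifies this proposition in one sentence by invoking the Framed Alexander theorem and then dragging the untwisted closure ribbon arcs, exactly as in the classical passage from standard closure to plat closure described above Theorem~\ref{th:Birman}, which is precisely your route. Your additional bookkeeping (the redistribution of framing exponents and the linear-algebra check) is harmless but not needed, since the drag only reroutes the untwisted, unlinked closure ribbons and leaves the original framing generators untouched.
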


 The following framed version of the Birman theorem tells us when two framed braids with even number of ribbons produce isotopic links when plat closed.

\begin{theorem}[Framed Birman]
Given two framed braids $b \in RB_{2n}$ and $b' \in RB_{2m}$ with isotopic framed plat closures, then there exists a sequence of framed braids
$b = b_0 \to b_1 \to b_2 \to \ldots \to b_N = b'$
such that the framed plat closure of each $b_i$ is equal to that of $b$ or $b'$,
and such that each move $b_i \to b_{i+1}$ is either a double coset move or a
framed stabilization move. More precisely the moves are: \begin{align*}
    b &\leftrightarrow h_1bh_2 \quad \textrm{where} \quad h_1,h_2 \in RH_{2n} \quad \textrm{(Double co-set move in $RB_{2n}$)}\\
    b &\leftrightarrow bt_{2n}^{\mp}σ_{2n}^{\pm} \in RB_{2n+2} \quad \quad \quad \quad \quad \quad \textrm{(Framed Stabilization move)}   
\end{align*}

\end{theorem}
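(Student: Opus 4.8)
The plan is to reduce the Framed Birman theorem to the classical Birman theorem (Theorem~\ref{th:Birman}) together with the structural result that $RH_{2n} \cong \mathbb{Z}^n \rtimes H_{2n}$ (Proposition~\ref{FrHilGr}), in exact analogy with how Theorem~\ref{framed L-Markov} was reduced to the classical $L$-equivalence theorem via the forgetful projection $\pi$. First I would establish the `only if' direction's easy converse: if $b$ and $b'$ differ by a double co-set move or a framed stabilization move, then their framed plat closures are isotopic framed links. For the double co-set move this is essentially the definition of $RH_{2n}$ as the stabilizer of $A^*_n$ (and its mirror for $A^{*-}_n$), so $A^*_n h_1 b h_2 A^{*-}_n$ is isotopic to $A^*_n b A^{*-}_n$ as framed tangles closed up. For the framed stabilization move, the point is that the blackboard framing contributed by the extra crossing $\sigma_{2n}^{\pm}$ is exactly cancelled by the $t_{2n}^{\mp}$ factor, so that $A^*_{n+1}\,(b\,t_{2n}^{\mp}\sigma_{2n}^{\pm})\,A^{*-}_{n+1}$ is regular-isotopic (in particular framing-preserving) to $A^*_n b A^{*-}_n$; this is a one-picture argument showing the new cup-cap pair plus the twisted crossing collapses, just as in the `if' part of Theorem~\ref{framed L-Markov}.

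For the main `only if' direction, suppose $b \in RB_{2n}$ and $b' \in RB_{2m}$ have isotopic framed plat closures. Write $b = t_1^{\lambda_1}\cdots t_{2n}^{\lambda_{2n}}\beta$ and $b' = t_1^{\kappa_1}\cdots t_{2m}^{\kappa_{2m}}\beta'$ with $\beta \in B_{2n}$, $\beta' \in B_{2m}$. Applying the projection $\pi$ that forgets framing (as in the proof of Theorem~\ref{framed L-Markov}), the plat closures of $\beta$ and $\beta'$ are isotopic unframed links, so by the classical Birman theorem there is a sequence of double co-set moves (by elements of the Hilden groups $H_{2k}$) and stabilization moves connecting $\beta$ to $\beta'$. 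I would lift this sequence move-by-move to $RB$: a classical double co-set move by $h_1, h_2 \in H_{2k}$ is lifted using the section $\varphi: H_{2k} \to K_{2k} \subset RB_{2k}$ from Proposition~\ref{FrHilGr}, giving a framed double co-set move; a classical stabilization $\beta \mapsto \beta\sigma_{2k}^{\pm}$ is lifted to the framed stabilization $b \mapsto b\,t_{2k}^{\mp}\sigma_{2k}^{\pm}$. This produces a framed braid $b'' \in RB_{2m}$ with $\pi(b'') = \beta'$, connected to $b$ by the allowed framed moves, and whose framed plat closure is isotopic to that of $b$ throughout.

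It remains to bridge $b''$ and $b'$: both project to $\beta'$, so $b'' = t_1^{\delta_1}\cdots t_{2m}^{\delta_{2m}}\beta'$ and $b' = t_1^{\kappa_1}\cdots t_{2m}^{\kappa_{2m}}\beta'$ differ only in the framing vector. Since their framed plat closures are isotopic framed links and they have the same underlying braid, the total framing on each resulting link component must agree; concretely, grouping the indices $\{1,\ldots,2m\}$ according to how the cup-cap pattern of $A^*_m$, the braid $\beta'$, and the cap pattern of $A^{*-}_m$ partition the strand endpoints into link components, the sum of the $\delta$'s over each such block equals the sum of the $\kappa$'s over that block. The difference $t_1^{\delta_1 - \kappa_1}\cdots t_{2m}^{\delta_{2m}-\kappa_{2m}}$ therefore lies in a subgroup generated by framing-differences along each component; each such elementary difference $t_a t_b^{-1}$ for $a, b$ on the same component can be absorbed into a double co-set move by an appropriate element of $\mathbb{Z}^m \subset RH_{2m}$ — precisely, the $\omega_\lambda$-type generators of $\ker(\pi|_{RH_{2m}})$ combined with conjugation to shift framing between endpoints of a common component, exactly as framing generators were shuffled along components in the proof of the Framed Alexander and Framed $L$-equivalence theorems. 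Solving the resulting linear system over $\mathbb{Z}$ (it decouples per component) finishes the reduction. The main obstacle I anticipate is this last step: one must verify carefully that the framing-shifting conjugations stay inside $RH_{2m}$ (not merely $RB_{2m}$), i.e.\ that the elements used genuinely stabilize $A^*_m$ and $A^{*-}_m$ as ribbon tangles, and that the per-component framing sum is the only invariant — this requires tracking how the plat closure (as opposed to the standard closure) identifies strand endpoints into components and checking the linear algebra has integer solutions in every case.
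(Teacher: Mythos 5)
Your proposal follows essentially the same route as the paper's proof: project by $\pi$, invoke the classical Birman theorem, lift double co-set moves through the section $\varphi\colon H_{2k}\to K_{2k}$ of Proposition~\ref{FrHilGr} and stabilizations as $b\mapsto b\,t_{2k}^{\mp}\sigma_{2k}^{\pm}$, then correct the leftover framing vector by double co-set moves with words in the $\omega_i\in\ker(\pi|_{RH_{2m}})$, solving an integer linear system. Your added care about per-component framing sums and about the $\omega$-elements lying in $RH_{2m}$ is precisely the content the paper delegates to the analogous system analysis in the proof of Theorem~\ref{framed L-Markov}, so there is no substantive difference in approach.
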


\begin{proof}
    We shall use the same idea that we used for the Framed $L$-move theorem. For the `if' part, it is clear that the plat closure of a framed braid after a double coset move or a framed stabilization move (with the framing adjustment) is isotopic to the plat closure of the framed braid before the move takes place.\\    
    \indent For the `only if' part, if two framed braids $b \in RB_{2n}$, \ $b'\in RB_{2m}$ have isotopic plat closures, then their projections in $B_{2n}$ and $B_{2m}$ by the mapping $π$ that forgets the framing, gives us that $π(b) = β \in B_{2n},π(b')=β'\in B_{2m}$, have isotopic plat closures and therefore are connected by a sequence of double co-set and stabilization moves, according to Theorem~\ref{th:Birman}. If a double co-set move takes place in the classical braid level, namely $π(b) \leftrightarrow h_1π(b)h_2$, then we perform the framed double co-set move $b \leftrightarrow φ(h_1)bφ(h_2)$, in the framed braid level, where $φ$ is the isomorphism $φ:H_{2n} \to Κ_{2n}$ in the proof of proposition \ref{FrHilGr}. If a stabilization move takes place in the classical braid level, namely $π(b) \leftrightarrow π(b)σ_{2n}^{\pm} $, then we perform the framed stabilization move $b \leftrightarrow bt_{2n}^{\mp}σ_{2n}^{\pm} $ in the framed braid level. Then, if $b=t_1^{λ_1}\ldots t_{2n}^{λ_{2n}}β$ and $b'=t_1^{δ_1}\ldots t_{2m}^{δ_{2m}}β'$, we shall have a finite sequence of double co-set and stabilization moves from the framed braid $b$ to $b''$ where $b''=t_1^{κ_1}\ldots  t_{2m}^{κ_{2m}}β'$. Since the plat closure of $b'$ and $b''$ represents isotopic framed links this will mean that $\sum_{i=0}^{2m}δ_i = \sum_{i=0}^{2m}κ_i$. We shall now modify the framing accordingly using double co-set moves of the form $Λ_{2m}b''Λ_{2m}$, where $Λ_{2m}=<ω_1,\ldots ,ω_{m}>$. More precisely, let $x = ω_1^{x_1}\ldots ω_m^{x_m}$ and $y = ω_1^{y_1}\ldots ω_m^{y_m}$, where $ω_i=t_{2i-1}t_{2i}^{-1}$. We need to solve the system of equations derived from $xb''y=b'$. But as we have seen (recall the proof of Theorem~\ref{framed L-Markov}) such a system always has infinitely many integer solutions.
\end{proof}

\nocite{*}
\bibliographystyle{plain}
\bibliography{refs}

\begin{thebibliography}{10}

\bibitem{alexander1923lemma}
James~W Alexander.
\newblock A lemma on systems of knotted curves.
\newblock {\em Proceedings of the National Academy of Sciences}, 9(3):93--95,
  1923.

\bibitem{birman1976stable}
Joan~S Birman.
\newblock On the stable equivalence of plat representations of knots and links.
\newblock {\em Canadian Journal of Mathematics}, 28(2):264--290, 1976.

\bibitem{cavicchioli2023passing}
Paolo Cavicchioli and Sofia Lambropoulou.
\newblock Passing from plat closure to standard closure of braids in
  $\mathbb{R}^3$, in handlebodies and in thickened surfaces.
\newblock {\em arXiv preprint arXiv:2308.07291}, 2023.

\bibitem{gugumcu2017new}
Neslihan G{\"u}g{\"u}mc{\"u} and Louis~H Kauffman.
\newblock New invariants of knotoids.
\newblock {\em European Journal of Combinatorics}, 65:186--229, 2017.

\bibitem{gugumcu2021braidoids}
Neslihan G{\"u}g{\"u}mc{\"u} and Sofia Lambropoulou.
\newblock Braidoids.
\newblock {\em Israel Journal of Mathematics}, 242:955--995, 2021.

\bibitem{hilden1975generators}
Hugh Hilden.
\newblock Generators for two groups related to the braid group.
\newblock {\em Pacific Journal of Mathematics}, 59(2):475--486, 1975.

\bibitem{kassel2008braid}
Christian Kassel and Vladimir Turaev.
\newblock {\em Braid groups}, volume 247.
\newblock Springer Science \& Business Media, 2008.

\bibitem{kauffman1990invariant}
Louis~H Kauffman.
\newblock An invariant of regular isotopy.
\newblock {\em Transactions of the American Mathematical Society},
  318(2):417--471, 1990.

\bibitem{ko1992framed}
Ki~Hyoung Ko and Lawrence Smolinsky.
\newblock The framed braid group and 3-manifolds.
\newblock {\em Proceedings of the American Mathematical Society},
  115(2):541--551, 1992.

\bibitem{lambropoulou1997markov}
Sofia Lambropoulou and Colin~P Rourke.
\newblock Markov's theorem in 3-manifolds.
\newblock {\em Topology and its Applications}, 78(1-2):95--122, 1997.

\bibitem{prasolov1997knots}
Viktor~Vasilevich Prasolov and Alekse{\u\i}~Bronislavovich Sosinski{\u\i}.
\newblock {\em Knots, links, braids and 3-manifolds: an introduction to the new
  invariants in low-dimensional topology}.
\newblock Number 154. American Mathematical Soc., 1997.

\bibitem{tawn2007presentation}
Stephen Tawn.
\newblock A presentation for hilden's subgroup of the braid group.
\newblock {\em arXiv preprint arXiv:0706.4421}, 2007.

\bibitem{tawn2009presentation}
Stephen Tawn.
\newblock A presentation for the pure hilden group.
\newblock {\em arXiv preprint arXiv:0902.4840}, 2009.

\end{thebibliography}

School of Applied Mathematical and Physical Sciences, National Technical University of Athens, Zografou campus, GR-15780 Athens, Greece.\\
\indent akokkinakis@mail.ntua.gr

\end{document}